\declaretheorem[name=Definition,style=definition,qed=$\dashv$,
numberwithin=section]{dfn}
\declaretheorem[name=Definition,style=definition,numbered=no,qed=$\dashv$]{dfn*}
\declaretheorem[name=Definition,style=definition,numbered=no]{dfnnoqed*}
\declaretheorem[name=Theorem,style=plain,sibling=dfn]{tm}
\declaretheorem[name=Theorem,style=plain,numbered=no]{tm*}
\declaretheorem[name=Lemma,style=plain,sibling=dfn]{lem}
\declaretheorem[name=Corollary,style=plain,sibling=dfn]{cor}
\declaretheorem[name=Corollary,style=plain,numbered=no]{cor*}
\declaretheorem[name=Remark,style=definition,sibling=dfn]{rem}
\declaretheorem[name=Fact,style=definition,sibling=dfn]{fact}
\declaretheoremstyle[headfont=\scshape]{claimstyle}
\declaretheorem[name=Claim,style=claimstyle]{clm}
\declaretheorem[name=Claim,style=claimstyle]{clmtwo}
\declaretheorem[name=Claim,style=claimstyle]{clmfour}
\declaretheorem[name=Claim,style=claimstyle]{clmfive}
\declaretheorem[name=Claim,style=claimstyle]{clmsix}
\declaretheorem[name=Claim,style=claimstyle]{clmseven}
\declaretheorem[name=Claim,style=claimstyle,numbered=no]{clm*}
\declaretheorem[name=Subclaim,style=claimstyle,numberwithin=clmsix]{sclmsix}
\declaretheorem[name=Subclaim,style=claimstyle,numbered=no]{sclm*}
\declaretheorem[name=Subsubclaim,style=claimstyle,numbered=no]{ssclm*}
\declaretheoremstyle[headfont=\scshape]{casestyle}
\declaretheorem[name=Assumption,style=casestyle]{ass}
\declaretheorem[name=Case,style=casestyle]{case}
\declaretheorem[name=Case,style=casestyle]{casetwo}
\declaretheorem[name=Case,style=casestyle]{casethree}
\declaretheorem[name=Case,style=casestyle]{casefour}
\newcommand{\QQ}{\mathbb Q}
\newcommand{\RR}{\mathbb R}
\newcommand{\PP}{\mathbb P}
\newcommand{\sub}{\subseteq}
\newcommand{\cross}{\times}
\newcommand{\all}{\forall}
\newcommand{\ex}{\exists}
\newcommand{\om}{\omega}
\newcommand{\pow}{\mathcal{P}}
\newcommand{\OR}{\mathrm{OR}}
\newcommand{\Hull}{\mathrm{Hull}}
\newcommand{\Tt}{\mathcal{T}}
\newcommand{\Ss}{\mathcal{S}}
\newcommand{\Uu}{\mathcal{U}}
\newcommand{\rg}{\mathrm{rg}}
\newcommand{\dom}{\mathrm{dom}}
\newcommand{\ins}{\trianglelefteq}
\newcommand{\nins}{\ntrianglelefteq}
\newcommand{\pins}{\triangleleft}
\newcommand{\crit}{\mathrm{cr}}
\newcommand{\rest}{\!\upharpoonright\!}
\newcommand{\com}{\circ}
\newcommand{\lh}{\mathrm{lh}}
\newcommand{\Ult}{\mathrm{Ult}}
\newcommand{\Ebar}{{\bar{E}}}
\newcommand{\Fbar}{{\bar{F}}}
\newcommand{\sats}{\models}
\newcommand{\elem}{\preccurlyeq}
\newcommand{\J}{\mathcal{J}}
\newcommand{\PS}{\mathrm{PS}}
\newcommand{\AC}{\mathrm{AC}}
\newcommand{\HOD}{\mathrm{HOD}}
\newcommand{\HC}{\mathrm{HC}}
\newcommand{\ZFC}{\mathrm{ZFC}}
\newcommand{\wlim}{\mathrm{wlim}}
\newcommand{\es}{\mathbb{E}}
\newcommand{\ebar}{{\bar{e}}}
\newcommand{\jbar}{{\bar{j}}}
\newcommand{\qbar}{{\bar{q}}}
\newcommand{\tbar}{{\bar{t}}}
\newcommand{\mubar}{{\bar{\mu}}}
\newcommand{\taubar}{{\bar{\tau}}}
\newcommand{\gammabar}{{\bar{\gamma}}}
\newcommand{\thetabar}{{\bar{\theta}}}
\newcommand{\kappabar}{{\bar{\kappa}}}
\newcommand{\Pbar}{{\bar{P}}}
\newcommand{\pibar}{\bar{\pi}}
\newcommand{\Hbar}{{\bar{H}}}
\newcommand{\Qbar}{{\bar{Q}}}
\newcommand{\Ubar}{{\bar{U}}}
\newcommand{\core}{\mathfrak{C}}
\newcommand{\her}{\mathcal{H}}
\newcommand{\pred}{\mathrm{pred}}
\newcommand{\un}{\cup}
\newcommand{\id}{\mathrm{id}}
\newcommand{\sq}{\mathrm{sq}}
\newcommand{\conc}{\ \widehat{\ }\ }
\newcommand{\bfSigma}{\undertilde{\Sigma}}
\newcommand{\rSigma}{\mathrm{r}\Sigma}
\DeclareMathOperator{\Th}{Th}
\DeclareMathOperator{\card}{card}
\DeclareMathOperator{\cof}{cof}
\newcommand{\xvec}{\vec{x}}
\newcommand{\OD}{\mathrm{OD}}
\newcommand{\bfrSigma}{\undertilde{\rSigma}}
\newcommand{\psub}{\subsetneq}
\newcommand{\cHull}{\mathrm{cHull}}
\newcommand{\minterm}{\mathrm{m}\tau}
\newcommand{\Mbar}{{\bar{M}}}
\newcommand{\lpole}{\left\lfloor}
\newcommand{\rpole}{\right\rfloor}
\newcommand{\univ}[1]{\lpole #1\rpole}
\newcommand{\tu}{\textup}
\newcommand{\lex}{{\text{lex}}}
\newcommand{\eqdef}{=_{\mathrm{def}}}
\renewcommand{\qedsymbol}{$\Box$}
\newcommand{\pvec}{\vec{p}}
\newcommand{\successor}{\mathrm{succ}}
\newcommand{\ph}{\mathfrak{P}}
\newcommand{\Lp}{\mathrm{Lp}}
\newcommand{\betavec}{\vec{\beta}}
\newcommand{\esomone}{\mathbbm{e}}\newcommand{\Momone}{\mathbbm{m}}
\newcommand{\xibar}{\bar{\xi}}
\newcommand{\lgcd}{\mathrm{lgcd}}
\newcommand{\css}{\mathrm{css}}
\newcommand{\stack}{\mathrm{stack}}
\title{The definability
of the extender sequence $\mathbbm{E}$\\ from $\mathbbm{E}\upharpoonright\aleph_1$ in $L[\mathbbm{E}]$\footnote{This is the author accepted version of an article published in The Journal of Symbolic Logic, 89(2):427--459, 2024, available at \url{https://www.doi.org/10.1017/jsl.2024.27}.}}
\author{Farmer Schlutzenberg\\
TU Wien}
\begin{document}

\maketitle

\begin{abstract}
Let $M$ be a short extender mouse.

We prove that if $E\in M$ and $M\sats$``$E$ is a countably complete short extender
whose support is a cardinal $\theta$ and $\her_\theta\sub\Ult(V,E)$'',
then $E$ is in the extender sequence $\es^M$ of $M$.
We also prove other related facts,
and use them to establish that if $\kappa$ is an uncountable cardinal 
of $M$ and $\kappa^{+M}$ exists in $M$ then $(\her_{\kappa^+})^M$ satisfies the Axiom of Global Choice.

We prove that if $M$ satisfies the Power Set Axiom
then $\es^M$ is definable over the universe 
of $M$ from the parameter 
$X=\es^M\rest\aleph_1^M$, and  $M$ satisfies
``Every set is $\OD_{\{X\}}$''. We also prove various
local versions of this fact in which $M$ has a largest cardinal,
and a version for generic extensions of $M$.

As a consequence, for example, the minimal proper class mouse with 
a Woodin limit of Woodin cardinals models ``$V=\HOD$''.
This adapts to many other similar examples.

We also describe a simplified approach to Mitchell-Steel fine structure, which does away with the parameters $u_n$.\footnote[0]{Keywords: Inner model theory, mouse, extender sequence, definability, condensation, self-iterability, definability.

MSC2020 class: 03E45, 03E55}
\end{abstract}

\section{Introduction}\label{sec:introduction}

Let $M$ be a premouse.\footnote{All premice considered in the paper will have only short extenders on their extender sequence,
even when not mentioned explicitly.} Write $\es^M$ for the extender sequence of $M$, not
including the active extender $F^M$ of $M$. Write $\es_+^M=\es^M\conc\left<F^M\right>$. Write $\univ{M}$ for the universe of $M$.
Write $\Momone^M=M|\aleph_1^M$. Write $\PS$ for the Power Set Axiom.\label{page:E^M}

We  consider here the following questions:
\begin{enumerate}[label=--]
 \item Given $E\in M$ such that $M\sats$``$E$ is an extender'', is $E\in\es^M$?
  \item (Steel) Suppose $M\sats\ZFC$. Does $M\sats$``There is $X\sub\aleph_1$
such that $V=\HOD_{\{X\}}$''?
 \item Is $\es^M$ definable over $\univ{M}$,
 possibly from some (small) parameter?
\end{enumerate}

Throughout the paper, when we write
``$\es^M$ is definable (within some complexity class)'',
we literally mean that the class
$\{\es^M\rest\alpha\mid\alpha<\OR^M\}$
is so definable. Given a definability class $\Sigma$, We write $\Sigma^N(X)$
for the class of relations which are $\Sigma$-definable over the structure $N$
from parameters in $X$, or just $\Sigma^N$ for $\Sigma^N(\emptyset)$.
As usual, $\Delta_n^N(X)$ denotes $\Sigma_n^N(X)\cap\Pi^N_n(X)$.

Answers to certain instances of the above questions have been known for some time.
Kunen proved that $L[U]$ satisfies ``$U$ is the unique normal measure'',
and therefore satisfies ``$V=\HOD$''.
Recall that $M_n$ is the minimal proper class mouse with $n$ Woodin cardinals.
Steel proved \cite{cmpwc} that for $n\leq\om$, $\es^{M_n}$ is definable over 
$\univ{M_n}$ without 
parameters.\footnote{He in fact showed that $M_n$
is its own ``core model'' (this must be defined appropriately).} The author proved similar results for larger, sufficiently self-iterable mice in 
\cite{mim} and \cite{extmax}. The proofs of these earlier
results depended on the mice in question being sufficiently self-iterable.
But 
non-meek mice typically fail to have such self-iterability,
making it difficult to generalize these kinds of arguments to models with higher large cardinals.

The main result of the paper is the following,
Theorem \ref{thm:E_def_from_e}. It answers Steel's question above positively,
in fact with $X=\Momone^M$.\footnote{
A natural variant of Steel's question 
is whether the same holds for some $X\in\RR^M$.
We do not know the answer of this question in general, but
in \cite{odle},
we will extend the results and methods here
to answer it affirmatively for tame mice.}

\begin{tm}\label{thm:E_def_from_e}
Let $M$ be a $(0,\om_1+1)$-iterable premouse satisfying $\PS$ and $\Momone=\Momone^M$.
Then
\[ \es^M\text{ is }\Delta_2^{\univ{M}}(\{\Momone\})\text{-definable}.\]
Therefore if $\univ{M}\sats\ZFC$ then $\univ{M}\sats$``$V=\HOD_{\{\Momone\}}$'' and $M\sats\ZFC$.\footnote{Note that
by writing ``$M\sats\ZFC$'', we mean ZFC including the Separation and Collection schemata in the premouse language, and refer to the structure $(\univ{M},\in,\es^M)$,
and so the assumption that $\univ{M}\sats\ZFC$ does not trivially imply $M\sats\ZFC$.}
\end{tm}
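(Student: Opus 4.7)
The plan is to produce a $\Delta_2$ formula $\phi(E, X)$ over $\univ{M}$ such that $E \in \es^M$ iff $\univ{M} \sats \phi(E, \Momone)$; the $\HOD$ and $\ZFC$ conclusions then follow as formal corollaries. The definition splits into two clauses. For extenders $E$ of index below $\aleph_1^M$, the parameter $\Momone = M|\aleph_1^M$ codes $\es^M\rest\aleph_1^M$ directly, so membership is captured by a $\Delta_1(\{\Momone\})$ clause. For extenders of index $\geq \aleph_1^M$, I would invoke the central characterization stated in the abstract: such $E$ lies in $\es^M$ iff $\univ{M} \sats$ ``$E$ is a countably complete short extender whose support is a cardinal $\theta$ and $\her_\theta \sub \Ult(V,E)$''. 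The nontrivial backward direction is the main technical theorem already advertised in the paper; the forward direction follows from $(0,\om_1+1)$-iterability (yielding countable completeness of each $E \in \es^M$) together with the standard indexing of Mitchell--Steel premice, which places $(\her_\theta)^{\univ{M}} \sub M|\lh(E) \sub \Ult(\univ{M},E)$ for the appropriate support $\theta$.

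The technical heart is the complexity count. The characterization above reads most naturally as $\Pi_2$ over $\univ{M}$: both countable completeness (``every $\om$-sequence of measure-one sets of $E$ has a common choice function'') and $\her_\theta \sub \Ult(V,E)$ (``every $x$ of hereditary cardinality $<\theta$ is $[f]_E$ for some $f$'') have this shape. To upgrade to $\Delta_2$, I would exhibit a symmetric $\Sigma_2$ formulation: existentially quantify over a single set $W \in \univ{M}$ --- morally $(\her_{\theta^+})^{\univ{M}}$ together with a witnessing family of representing functions for the ultrapower --- from which both conditions become $\Delta_0$-checkable. This packaging is where I expect the main obstacle: the witness $W$ must certify $E \in \es^M$ without implicitly reappealing to the predicate $\es^M$ whose definability we are proving. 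Here I would draw on the simplified Mitchell--Steel fine structure the paper develops, since stripping the parameters $u_n$ is precisely what makes the relevant witnessing objects small and absolute enough to be packaged under a single existential quantifier.

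Finally, the $\HOD$ and $\ZFC$ corollaries are formal. Since $M$ is a mouse, $\univ{M} \sats V = L[\es^M]$, so every set is $\OD$ in $\univ{M}$ from $\es^M$; combined with the $\Delta_2(\{\Momone\})$-definability of $\es^M$, every set is $\OD_{\Momone}$, giving $\univ{M} \sats V = \HOD_{\Momone}$. Moreover, any instance of a $\ZFC$-axiom for $M$ (i.e., a scheme in the language expanded by a predicate for $\es^M$) reduces, via the definition, to a $\ZFC$-instance for $\univ{M}$ alone; hence $M \sats \ZFC$ whenever $\univ{M} \sats \ZFC$.
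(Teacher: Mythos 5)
Your proposal rests on a claimed biconditional that is false, and the direction that fails is the one you treat as routine. Theorem \ref{thm:strong_extender_in_sequence} (the ``central characterization stated in the abstract'') is a \emph{one-way} implication: if $E\in M$ is an $M$-total, countably complete short extender with cardinal support $\theta$ and $\her_\theta^M\sub\Ult(M,E)$, then $E\in\es^M$. The converse is badly false. Most extenders on $\es^M$ are \emph{partial}---they measure only $\pow(\crit E)\cap M|\lh(E)$, not $\pow(\crit E)\cap M$---so $\Ult(\univ{M},E)$ is not even formed by an $M$-total extender, and the support $\nu_E$ is typically not a cardinal of $M$ at all. Mitchell--Steel indexing places $\lh(E)$ at roughly $(\nu_E^+)^{\Ult(M|\lh E,E)}$ and says nothing like ``$\her_{\nu_E}^M\sub\Ult(\univ{M},E)$.'' So your clause for extenders of index $\geq\aleph_1^M$ would capture only a sparse subset of $\es^M$ (essentially those total extenders witnessing $\her_\theta$-strength), not the sequence.

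This is not a fixable gloss; it points at a different architecture. The paper does not define $\es^M$ by testing individual extenders against a local criterion. Instead it identifies $\es^M$ by inductively recovering the \emph{levels} $M|(\theta^+)^M$ from $M|\theta$ for each $M$-cardinal $\theta\geq\aleph_1^M$ (Theorem \ref{thm:local_E_def_from_e}, with Theorem \ref{thm:E_def_from_e} as the $\PS$-case). The tools depend on the shape of $M$ above $\theta$: Jensen's observation (Facts \ref{fact:Jensen_regular_in_V}--\ref{fact:Jensen_regular_lgst_proper_seg}) when $\theta$ is regular; Woodin's countable-substructure argument via the set $\css^M$ (Lemma \ref{lem:countable_submodels} and the proof of Corollary \ref{cor:L(pow(kappa))^M_AC}) when $\theta$ is a cutpoint; the extender-maximality result Theorem \ref{thm:proj_to_strength} when there is no cutpoint above $\theta$; and phalanx-comparison arguments in the remaining cases. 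Theorems \ref{thm:strong_extender_in_sequence} and \ref{thm:proj_to_strength} enter only as ingredients in the no-cutpoint case and in making the case distinction itself definable; they are not a global membership test. The $\Delta_2$ bound then comes from the fact that both the function $\theta\mapsto M|(\theta^+)^M$ and its negation are captured by $\Sigma_2(\{\Momone\})$ formulas once the inductive definition is set up, not from a witness-packaging trick. (The simplified fine structure of \S\ref{sec:fs} is a convenience for stating hulls cleanly, not the mechanism by which witnesses become $\Delta_0$-checkable.) Your final paragraph deducing $V=\HOD_{\Momone}$ and $M\models\ZFC$ from definability of $\es^M$ is fine, but the definability itself needs the level-by-level condensation-stack argument.
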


Note that the only large cardinal restriction on the mice involved is the paper's global assumption
that  all premice considered have only short extenders on their sequence. This means that
we need to deal with mice which are significantly non-self-iterable, for which the earlier arguments mentioned above do not seem to apply. 

For the proof, we will
use a method which avoids any self-iterability,
and is more focused on condensation properties.
The first proof we give, in \S\ref{sec:con_stack},
will actually
yield a more general and local version,
in which the mouse can have a largest cardinal, but in which
case we must allow somewhat higher complexity in the definition of $\es^M$
from the parameter $\Momone^M$. We will also give a variant
proof in \S\ref{sec:direct_con_stack},
which uses the same main idea, but is a little simpler. (However, it does not give all of the information provided by the first proof.)
The argument
in \S\ref{sec:con_stack} was discovered in 2015, and 
that in \S\ref{sec:direct_con_stack} in 2019.
The argument in \S\ref{sec:direct_con_stack} is used in the preprint \cite{vm2}.

We easily get the following corollary,
which does involve some self-iterability:

\begin{cor*}[\ref{cor:V=HOD}]
 Let $M$ be a $(0,\om_1+1)$-iterable premouse.
 Suppose $M$ satisfies $\PS+$``$\Momone^M$ is $(\om,\om_1+1)$-iterable''.
 Then $\es^M$ is \tu{(}lightface\tu{)} $\Delta_2^{\univ{M}}$-definable and
  $\univ{M}\sats$``Every set is $\OD$''.\footnote{Let $M$ be a premouse modelling $\PS$.
 Here and elsewhere, we say that $x\in\OD^{\univ{M}}$ iff
 there is $\alpha<\OR^M$ such that $\{x\}$
  is definable from ordinal parameters over $\her_\alpha^M$.
  Likewise for $\OD_P^{\univ{M}}$.}
\end{cor*}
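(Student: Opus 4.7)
The plan is to derive the Corollary directly from Theorem~\ref{thm:E_def_from_e} by eliminating the parameter $\Momone^M$. The Theorem gives a $\Delta_2^{\univ{M}}(\{\Momone^M\})$-definition of $\es^M$, so if I can show that, under the corollary's stronger hypothesis, $\Momone^M$ is itself lightface $\Delta_2^{\univ{M}}$-definable, both conclusions follow at once: the lightface $\Delta_2$-definability of $\es^M$ is the first, and ``every set is $\OD$'' is then immediate from the footnoted definition, since any $x\in\univ{M}$ lies in some $(\her_\alpha)^M$ and is definable there from ordinal parameters once $\es^M$ is lightface definable.

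My characterization of $\Momone^M$ is
\[ \Momone^M = \bigcup\{N : N \text{ is a countable sound }(\om,\om_1+1)\text{-iterable premouse}\}. \]
For ``$\supseteq$'', the standard comparison argument applies: any such $N$ can be coiterated with $M$, using $(0,\om_1+1)$-iterability of $M$ and $(\om,\om_1+1)$-iterability of $N$; by uniqueness of comparison there is no drop on the $N$-side, and $N$ embeds as a proper initial segment of $M$, hence of $\Momone^M$. For ``$\subseteq$'', the corollary's hypothesis gives that every countable $N\triangleleft M$ is sound and $(\om,\om_1+1)$-iterable, so $\Momone^M$ is exhausted by such segments, each of which lies on the right-hand side.

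For the complexity count, iterability of a countable premouse $N$ is expressed by ``for every iteration tree $T$ on $N$ of length $<\om_1$ there is a cofinal wellfounded branch'', which is $\Pi_1$ over $(\her_{\aleph_2})^M$ and hence $\Pi_2^{\univ{M}}$. The right-hand side of the displayed equation is therefore $\Sigma_2^{\univ{M}}$ (existentially quantifying over countable $N$); by the linear ordering of countable sound iterable premice under $\triangleleft$ (the same comparison fact), its complement also admits a $\Sigma_2^{\univ{M}}$-definition, so the set is $\Delta_2^{\univ{M}}$. Substituting this into the definition supplied by Theorem~\ref{thm:E_def_from_e} yields a lightface $\Delta_2^{\univ{M}}$-definition of $\es^M$.

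The step requiring most care is the absoluteness between $M$ and $V$ of the notion ``countable $(\om,\om_1+1)$-iterable premouse''. The comparison argument for $\supseteq$ is most naturally run externally in $V$, whereas the definition I want is evaluated in $\univ{M}$; one must therefore verify that the countable iterable premice that $\univ{M}$ recognizes coincide with those coming up in the $V$-level comparison well enough that the equality holds as interpreted in $\univ{M}$. This is exactly where the corollary's internal hypothesis on $M$, beyond what Theorem~\ref{thm:E_def_from_e} assumes, earns its keep, and it is the only point I would expect to require any real friction in the write-up.
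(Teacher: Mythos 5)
Your overall plan is the right one, and it is evidently what the paper has in mind: the hypothesis ``every countable segment of me is $(\om,\om_1+1)$-iterable'' exists precisely so that $\Momone^M$ can be identified inside $\univ{M}$ without parameters, at which point Theorem~\ref{thm:E_def_from_e} does the rest. The parameter-substitution and the reduction of ``every set is $\OD$'' to the lightface definability of $\es^M$ are also fine.

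The gap is in your ``$\supseteq$'' direction, and it is exactly where you already sensed trouble. You propose to coiterate $N$ directly with $M$, ``using $(0,\om_1+1)$-iterability of $M$ and $(\om,\om_1+1)$-iterability of $N$.'' These two iterability facts live in different universes: $M$'s iterability is a fact of $V$, while $N$'s iterability is only supplied by the corollary's hypothesis as a fact \emph{inside} $M$. There is no single universe in which you are entitled to run that coiteration, and $M$ may moreover be a proper class, so the comparison is ill-posed even setting the strategy mismatch aside. The absoluteness concern you flag at the end is a symptom of this, but you have misdiagnosed where the friction lies: no transfer of iterability between $M$ and $V$ is needed, because the comparison should never leave $M$ in the first place.

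The fix is to compare $N$ not with $M$ but with a proper initial segment $M|\beta$ of $M$, entirely inside $M$. Since $N$ is countable in $M$, pick $\beta<\om_1^M$ with $\OR^N<\beta$ and $\rho_\om^{M|\beta}=\om$ (such $\beta$ are cofinal in $\om_1^M$, as $\om_1^M$ is an $M$-cardinal). Both $N$ and $M|\beta$ are then $\om$-sound, project to $\om$, and are $(\om,\om_1+1)$-iterable in $M$; the standard comparison, run in $M$, gives $N\ins M|\beta$ or $M|\beta\ins N$, and the second is impossible since $\OR^N<\beta$. Hence $N\pins\Momone^M$. This also surfaces a small imprecision in your characterization: for this argument you want $N$ to be a countable \emph{$\om$-mouse} in $M$'s sense (i.e.\ $\om$-sound with $\rho_\om^N=\om$), not merely ``sound''. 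With that added, the ``$\subseteq$'' direction still holds (for $x\in\Momone^M$ take $M|\beta$ as above with $x\in M|\beta$, which is in the collection by the corollary's hypothesis), the collection is linearly ordered by $\ins$, and its union is $\Momone^M$. The complexity count then goes through as you describe, now with no dangling absoluteness issue to resolve.
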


To prove the more local version of Theorem \ref{thm:E_def_from_e} above
we will use certain \emph{extender maximality} properties of $\es^M$,
Theorems \ref{thm:strong_extender_in_sequence}
and \ref{thm:proj_to_strength} below, which are refinements of results from \cite{mim}, with similar proofs.

\begin{tm}[Steel, Schlutzenberg]\label{thm:strong_extender_in_sequence}
Let $M$ be an $(0,\om_1+1)$-iterable premouse. Let $E\in M$  
be such that $M\sats$``$E$ is a short, total, countably complete extender'',
$\nu_E$ is a cardinal of $M$ and
$\her_{\nu_E}^M\sub\Ult(M,E)$.
Then \tu{(}the trivial completion of\tu{)} $E$ is in $\es^M$.
\end{tm}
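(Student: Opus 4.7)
The plan is to assume, for contradiction, that $F\notin\es^M$, where $F$ denotes the trivial completion of $E$ at $\nu:=\nu_E$, and then to derive a contradiction by coiterating $M$ against a suitable phalanx that encodes $F$. Specifically, I would form the two-root phalanx $\mathfrak{P}=(M,\Ult(M,F);\nu)$ with exchange ordinal $\nu$; the hypothesis $\her_\nu^M\subseteq\Ult(M,F)$, together with $\nu$ being a cardinal of $M$, gives the required agreement between the two roots below $\nu$, so that $\mathfrak{P}$ is well-formed. By convention, in iteration trees on $\mathfrak{P}$, extenders of index $<\nu$ come from the $M$-root and those of index $\geq\nu$ come from the $\Ult(M,F)$-root.

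Next, I would verify $(\omega,\omega_1+1)$-iterability of $\mathfrak{P}$ by a standard countable-completeness / realization argument. Given a putative countable iteration tree on $\mathfrak{P}$, lift it via $i_F$ to an iteration tree on $M$; the $(0,\omega_1+1)$-iterability of $M$ supplies a cofinal branch upstairs, and the countable completeness of $E$ (inherited by $F$) is exactly what is needed to pull this branch back to a cofinal branch on $\mathfrak{P}$. Having iterability on both sides, I would coiterate $\mathfrak{P}$ against $M$, obtaining trees $\mathcal{T}$ on $\mathfrak{P}$ and $\mathcal{U}$ on $M$. Both terminate, and the Dodd--Jensen lemma (applied to the copy of $M$ sitting as a root of $\mathfrak{P}$) forces the main branch of $\mathcal{T}$ to emanate from the $\Ult(M,F)$-root, not to drop, and to produce a last model $P_\infty$ comparable with the last model $M_\infty$ of $\mathcal{U}$.

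I would then analyze the coiteration at index $\nu$. Because $M$ and $\Ult(M,F)$ agree below $\nu$, no extender of index $<\nu$ is used on either side, so the first disagreement occurs at $\nu$. The $\Ult(M,F)$-side carries, canonically, the trivial completion $F$ (derivable from the ultrapower embedding restricted to $\nu$), whereas by our contradiction hypothesis no such extender is indexed at $\nu$ on $\es^M$. Matching agreements and the Dodd--Jensen comparison of embeddings then force any extender used at index $\nu$ on the $M$-side of the coiteration to coincide with $F$; pulling this information down the main branch of $\mathcal{U}$ (which does not drop) then places $F$ on $\es^M$ at index $\nu$, contradicting the assumption.

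I expect the principal obstacle to be the careful index-$\nu$ bookkeeping in the last step: ruling out drops or nontrivial initial-segment cases on either branch that would route the coiteration around $\nu$, handling the initial segment condition so that the extender canonically extracted at $\nu$ on the $\Ult(M,F)$-side is indeed $F$ rather than some proper initial segment, and making sure the Jensen/Mitchell--Steel indexing conventions are such that ``$F\in\es^M$'' is the correct consequence. The iterability of $\mathfrak{P}$ is comparatively routine but depends essentially on the countable completeness of $E$ in exactly the form stated.
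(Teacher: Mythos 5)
Your high-level strategy — form a phalanx whose second root is the ultrapower, prove it iterable via countable completeness and the iterability of $M$, coiterate, and read $F$ off from the comparison — is the same general shape as the paper's proof (Theorem~\ref{tm:ext_tau_a_card}). However, there are several places where the proposal skips over exactly the points that make the theorem nontrivial.

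First, you assert that $\her_\nu^M\subseteq\Ult(M,F)$ and $\nu$ being an $M$-cardinal give ``the required agreement between the two roots below $\nu$, so that $\mathfrak{P}$ is well-formed.'' This is not true as stated: the hypothesis gives agreement of the \emph{sets} of rank $<\nu$, but says nothing about the \emph{extender sequences} of $M|\nu$ and $\Ult(M,F)|\nu$. Agreement of the premice below $\nu$ (and in fact up to $(\nu^+)^{\Ult(M,F)}$) is conclusion~(i) of Theorem~\ref{tm:ext_tau_a_card}, not a hypothesis; the paper therefore introduces $\theta$ as the largest $M$-cardinal $\leq\tau$ at which the premice do agree, forms the phalanx with exchange ordinal (the collapse of) $\theta$, and uses the coiteration together with $\her_\tau^{\bar M}\subseteq\bar U$ to \emph{prove} $\theta=\tau$. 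Without this, your phalanx need not be well-formed. Second, the iterability argument is described in the wrong direction: $i_F\colon M\to\Ult(M,F)$ does not give a map from $\Ult(M,F)$ into $M$, so one cannot ``lift via $i_F$ to a tree on $M$.'' The paper instead passes to countable self-solid hulls $\bar M\to M$, $\bar U\to U=\Ult_m(M,E)$, and lifts trees on the phalanx to (essentially $m$-maximal) trees on $U$; the hard part is producing a copy map for the $\bar M$-root that agrees with the Shift Lemma map $\bar U\to U$ up to $\bar\theta$, which forces a genuine case split according to whether $\sup\pi``\bar\theta$ is bounded in $\theta$ (and, in the bounded case, wrinkles with superstrong-type extenders and the monotone $\nu$-condition). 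None of this is visible in the proposal, and it is the core of the argument. Third, the comparison and Dodd--Jensen step is asserted for the raw phalanx $(M,\Ult(M,F);\nu)$ against $M$, with no fine-structural reduction; the paper first reduces to $N=\J(R)$ with $\rho_\omega^R=\sigma$ and then takes an $(m+1)$-self-solid countable hull $\bar M$ (projecting to $\omega$), precisely so that the standard comparison arguments (no drops, terminal-model agreement, and the recovery of $\bar E$ as the $(\mu,\tau)$-extender derived from $i^{\mathcal T}$) go through. So the skeleton of your plan is right, but the two main obstacles you flag at the end — ``index-$\nu$ bookkeeping'' and iterability — are in fact where the real work lies, and the proposal as written does not yet contain the ideas needed to overcome them.
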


\begin{rem}
 Steel first proved that $E\in\es^M$ under the assumptions of \ref{thm:strong_extender_in_sequence}
 together with the added assumptions
 that $M\sats\PS$ + ``$\nu_E$ is regular'' and 
 $\Ult(M,E)|\nu_E=M|\nu_E$.
 The author then generalized Steel's proof to obtain 
\ref{thm:strong_extender_in_sequence}.

Recall that if $M\sats$ ``$E$ is a normal measure'' then $\nu_E=\crit(E)^{+M}$, so the 
requirement that $\her_{\nu_E}^M\sub\Ult(M,E)$ holds automatically,
and therefore $E\in\es^M$ (given $M$ is iterable).
\end{rem}

\begin{tm}\label{thm:proj_to_strength}
Let $M$ be a $(0,\om_1+1)$-iterable premouse. Let $E,R\in M$ and $\tau\in\OR^M$ be such that 
$\tau$ is a cardinal of $M$, $R$ is a premouse, $\rho_\om^R=\tau$ and $M\sats$ ``$E$ is a short 
extender, 
$\her_\tau\sub\Ult(M,E)$ and $R\pins\Ult(M,E)$''.
Then $R\pins M$.
\end{tm}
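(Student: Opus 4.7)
The plan is to obtain $R\pins M$ by coiterating $R$ with $M$ and exploiting $R$'s soundness together with its projection to $\tau$ to force both sides to be trivial; the strategy parallels that of Theorem \ref{thm:strong_extender_in_sequence}, to which the excerpt says this proof is very similar. I first observe that $R$ is $(0,\om_1+1)$-iterable: $\Ult(M,E)$ inherits $(0,\om_1+1)$-iterability from $M$ by copying iteration trees through $i_E$, and $R\pins\Ult(M,E)$ inherits it as an initial segment.

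Next I would establish the agreement $R|\tau=M|\tau$. Since $\rho_\om^R=\tau<\Ord^R$ and $R\pins\Ult(M,E)$, we have $R|\tau=\Ult(M,E)|\tau$. For the equality $M|\tau=\Ult(M,E)|\tau$: (i) $\tau$ is a cardinal of $\Ult(M,E)$, since any surjection from some $\alpha<\tau$ onto $\tau$ lying in $\Ult(M,E)$ would also lie in $M$ (elements of $\Ult(M,E)$ lie in $M$ whenever $E\in M$), contradicting $\tau$ being an $M$-cardinal; (ii) the hypothesis $\her_\tau^M\sub\Ult(M,E)$ combined with $\Ult(M,E)\sub M$ and the cardinality agreement below $\tau$ from (i) gives $\her_\tau^M=\her_\tau^{\Ult(M,E)}$; (iii) under the Mitchell-Steel indexing, two premice of the same height with the same universe coincide, so $M|\tau=\Ult(M,E)|\tau$.

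With iterability and the agreement in hand, I would coiterate $R$ with $M$. Standard termination forces one side to be a (possibly dropped) initial segment of an iterate of the other. Soundness of $R$ with $\rho_\om^R=\tau$, combined with $R|\tau=M|\tau$, forces the $R$-side to be trivial: any first disagreement lies at an index $\geq\tau$, and any extender the $R$-side would apply leads, via the analysis of its critical point relative to $\rho_\om^R$ and the resulting drop in model or degree, to a configuration ruled out by the Dodd-Jensen lemma applied to the sound premouse $R$. By a symmetric argument together with the cardinality of $\tau$ in $M$ (which prevents any extender used on the $M$-side with critical point below $\tau$ from moving $\tau$), the $M$-side is also trivial up to $\Ord^R$, and this yields $R\pins M$.

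The main obstacle I anticipate is the fine-structural bookkeeping in the coiteration: rigorously controlling drops on either side and applying Dodd-Jensen in the appropriate form, and verifying step (iii) of the agreement argument in the Mitchell-Steel framework, especially in the case $\tau>\crit(E)$ where the hypothesis $\her_\tau^M\sub\Ult(M,E)$ does the essential work rather than any automatic preservation by $i_E$. Both of these closely parallel the argument for Theorem \ref{thm:strong_extender_in_sequence}.
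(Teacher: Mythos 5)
Your step (iii) in establishing the agreement $M|\tau=\Ult(M,E)|\tau$ is a genuine gap, and in fact it conceals the entire content of the theorem. Having the same universe and the same height does \emph{not} by itself force two premice to coincide: the extender sequence is a predicate, not determined in any routine way by the underlying transitive set. For instance, even the fact that a normal measure on $\kappa$ is recoverable from the universe of $L[U]$ is Kunen's theorem, not a triviality; more to the point, the principal theme of this paper is precisely that $\es^M$ is definable from the universe of $M$ together with $\Momone^M$, and this is proved, not assumed. Your inductive attempt at (iii) would run aground exactly at the active levels: given $M||\alpha=\Ult(M,E)||\alpha$, nothing forces the active extenders $F^{M|\alpha}$ and $F^{\Ult(M,E)|\alpha}$ to agree. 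The coherence $M|\tau=\Ult(M,E)|\tau$ is in fact conclusion (i) of Theorem~\ref{tm:ext_tau_a_card}, whose proof explicitly introduces $\theta$ as the largest $M$-cardinal up to which agreement holds, sets up a phalanx $((\bar M,m-1,\bar\theta),(\bar U,m-1),\bar\theta)$ over a countable hull $\bar M$, proves the phalanx is iterable by lifting to essentially $m$-maximal trees on $U=\Ult(M,E)$, and only after a comparison concludes $\theta=\tau$. None of that work can be bypassed.

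The paper's actual route to Theorem~\ref{thm:proj_to_strength} is quite different from a direct coiteration of $R$ with $M$: one replaces $E$ by a sub-extender with generators of the form $\tau\cup\dot{t}$ (so that $R$ still appears in the ultrapower and is coded into the finite generator $\dot{t}$), and then invokes Theorem~\ref{tm:ext_tau_a_card}(i), which gives $\Ult(M,E')|(\tau^+)^{\Ult(M,E')}=M||(\tau^+)^{\Ult(M,E')}$; since $R\pins\Ult(M,E')|(\tau^+)^{\Ult(M,E')}$, this yields $R\pins M$. Your coiteration scheme is also under-justified at the final step: even granting $R|\tau=M|\tau$ and that the $R$-side of the comparison is trivial (which does follow from soundness and $\rho_\om^R=\tau$), the comparison only delivers $R\ins M^\Uu_\infty$ for the last model $M^\Uu_\infty$ on the $M$-side; the assertion that ``the cardinality of $\tau$ in $M$ prevents any extender used on the $M$-side with critical point below $\tau$ from moving $\tau$, so the $M$-side is trivial up to $\OR^R$'' does not obviously imply $M^\Uu_\infty\rest\OR^R=M\rest\OR^R$, since the $M$-side can apply extenders indexed in $[\tau,\OR^R)$. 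Handling this properly would again push you toward a phalanx comparison rather than a naive coiteration. You correctly guessed that the proof is ``very similar'' to that of Theorem~\ref{thm:strong_extender_in_sequence}; what you missed is that both are corollaries of Theorem~\ref{tm:ext_tau_a_card}, and the similarity lives there, in the phalanx argument, not in a direct comparison of $R$ against $M$.
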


Slightly less general versions of \ref{thm:strong_extender_in_sequence} and \ref{thm:proj_to_strength}
were obtained by the author in 2006 \cite{mim}. Prior to this, Woodin had conjectured
that if $M$ is a mouse, $\kappa$ is uncountable in $M$ and $\kappa^{+M}<\OR^M$,
then $L(\pow(\kappa)^M)\sats\AC$.
Woodin's conjecture follows immediately from the following corollary (\ref{cor:L(pow(kappa))^M_AC})
to the preceding theorems.
Steel noticed that \ref{cor:L(pow(kappa))^M_AC} follows from
\ref{thm:proj_to_strength} combined with an argument of Woodin's.\footnote{
The corollary appeared first in \cite{mim}. It can now be deduced trivially
from \ref{thm:E_def_from_e}. However, we will give its original proof (from \cite{mim}),
as this constitutes a significant part of the proof of \ref{thm:E_def_from_e},
and so it serves as a useful warm-up.}

\begin{cor}\label{cor:L(pow(kappa))^M_AC} Let $M$ be a $(0,\om_1+1)$-iterable
premouse and $\kappa\in\OR^M$ be such that $M\sats$``$\kappa$ is uncountable''
and $\kappa^{+M}<\OR^M$.
Then $M|\kappa^{+M}$ is definable from parameters over $\her_{\kappa^{+M}}^M$.
\end{cor}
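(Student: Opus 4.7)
The plan is to characterize initial segments of $M|(\kappa^+)^M$ via Theorem~\ref{thm:proj_to_strength} combined with a countable-iterability condition, in the spirit of the Woodin-style argument cited in the footnote.

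Set $\lambda := (\kappa^+)^M$ and $H := \her_\lambda^M$. I will define $M|\lambda$ over $H$ from the parameter $M|\kappa$, which lies in $H$ because $\kappa$ is uncountable in $M$ so $|M|\kappa|^M \leq \kappa < \lambda$. The central claim is that for any sound premouse $R$ with $R|\kappa = M|\kappa$ and $\rho_\om^R \leq \kappa$,
\[
R \pins M|\lambda \quad \iff \quad \text{every countable elementary hull of } R \text{ is } \om_1\text{-iterable.}
\]
The forward direction is standard: $R \pins M|\lambda$ implies $R$ is iterable as an initial segment of the iterable $M$, and countable hulls of iterable premice are iterable. For the converse, I coiterate $R$ with $M$; countable iterability yields termination via reflection to countable hulls, and soundness of $R$ at $\kappa$ together with $R|\kappa = M|\kappa$ prevents drops on the $R$-side while forcing the iteration extenders on the $M$-side to have critical point $\geq \kappa$. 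Letting $E$ be the resulting (composed) iteration extender, we obtain $\her_\kappa^M \sub \Ult(M,E)$ and $R \pins \Ult(M,E)$, and Theorem~\ref{thm:proj_to_strength} then yields $R \pins M$ (applied directly if the $M$-side moves by a single short extender, and step-by-step otherwise), hence $R \pins M|\lambda$ via the projectum bound.

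For definability over $H$: countable hulls of $R$ and iteration trees of length $\aleph_1^M$ on them are elements of $\her_{\aleph_1^M}^M \sub H$ (using $\aleph_1^M \leq \kappa$), so ``every countable hull of $R$ is $\om_1$-iterable'' is expressible as a formula over $H$ with parameter $M|\kappa$. Then $M|\lambda$ is recovered as the $\pins$-supremum of all $R$ satisfying the criterion, giving the desired definition from parameters over $H$.

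The main obstacle is absoluteness of iterability: the countable iterability computed inside $H$ must coincide with iterability in the external universe $V$, where $M$ is actually iterable and the coiteration of $R$ with $M$ takes place. Below the level of Woodin cardinals this is classical $\Pi^1_2$-absoluteness; beyond it one invokes Woodin's argument to localize the iterability condition (e.g., to the existence of branches realizable in suitable generic extensions of $H$) in such a way that $H$-iterability and $V$-iterability coincide for the premice relevant to the coiteration, thereby closing the loop.
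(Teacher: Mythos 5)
Your plan has the right ingredients in its forward direction and the right instinct that countable substructures should enter, but the converse direction has a genuine gap precisely at the point you flag at the end, and that gap is not a technicality that can be waved away by ``invoking Woodin's argument,'' because Woodin's argument is structured specifically to \emph{avoid} the step you are taking.

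The problem is that your criterion asks $\her_\lambda^M$ to evaluate ``every countable elementary hull of $R$ is $\om_1$-iterable.'' Iterability as computed inside $\her_\lambda^M$ need not agree with iterability in $V$; indeed $\her_\lambda^M$ typically does not contain, nor even correctly recognize, the relevant iteration strategy, and beyond $\Pi^1_2$-absoluteness there is no general transfer. The paper's proof (the Woodin-style Case 2) sidesteps this entirely: it does not formalize iterability over $\her$ at all. Instead it fixes the single \emph{parameter} $X\in\her$, the set of all $H\in\HC^M$ that elementarily embed (via a map in $M$) into \emph{some actual} $P\pins M|(\kappa^+)^M$. This $X$ is essentially a subset of $\om_1^M$, hence lies in $\her$, and it silently encodes $V$-iterability without ever expressing it as a formula. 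The criterion becomes ``every countable elementary submodel of $R$ is in $X$'', and the comparison that verifies it is carried out in $V$ between two countable mice $\Pbar,\Qbar$, both of which are iterable \emph{because they are in $X$} --- no internal iterability claim is ever evaluated. Your proposal inverts this, reintroducing exactly the circularity Woodin's trick is designed to break.

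There is a second, more local gap: your appeal to Theorem~\ref{thm:proj_to_strength} is not licensed. That theorem requires $E\in M$ together with $M\sats$``$\her_\tau\sub\Ult(M,E)$ and $R\pins\Ult(M,E)$''; but the (composed) iteration extender arising from a $V$-coiteration of $R$ with $M$ is not in general an element of $M$, and $M$ cannot see the iteration that produced it. The paper's Case 1 (no cutpoint in $[\kappa,(\kappa^+)^M)$) applies Theorem~\ref{thm:proj_to_strength} with extenders that genuinely lie in $\her$, namely total extenders on $\es^M$ indexed cofinally below $(\kappa^+)^M$; this is exactly why that case is cleanly handled. Once there is a cutpoint $\gamma_0\in[\kappa,(\kappa^+)^M)$, no such extender in $\her$ exists, and the argument must switch to the $X$-parameter. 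Your proof attempts to treat both situations uniformly and therefore has to conjure an extender in $M$ that need not be there. The two-case split in the paper is not an expositional choice; it reflects a real dichotomy in what $\her$ contains.

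A repaired version of your argument would: (i) split on whether $M$ has a cutpoint in $[\kappa,(\kappa^+)^M)$; (ii) in the no-cutpoint case, characterize $R\pins M|(\kappa^+)^M$ by the existence of $E\in\her$ with $\her\sats$``$E$ is countably complete short, $\her_\kappa\sub\Ult(M,E)$, $R\pins\Ult(M,E)$'', and apply Theorem~\ref{thm:proj_to_strength}; (iii) in the cutpoint case, define $X\in\her$ as above and characterize $R$ by ``every countable elementary substructure of $R$ is in $X$'', with the verification being a $V$-comparison of countable mice certified by $X$.
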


In \S\ref{sec:fs} we also describe a simplification to Mitchell-Steel fine structure of \cite{fsit},
making do without the parameters $u_n$.
One could just use the standard fine structure,
but the simplification removes some  complications, and
we will officially make use of it throughout the paper.

\subsection{Conventions and notation}\label{ssec:ntn}

Note that some notation was introduced on page \pageref{page:E^M}.

Most non-standard conventions
are as in \cite[\S1.1]{premouse_inheriting} or \cite[\S1.1]{iter_for_stacks} (and mostly, but not completely,
as in \cite[\S1.1]{extmax}).
In particular,
 \emph{premice}  $M$
have Mitchell-Steel indexing,
except that we allow
extenders of superstrong type in the extender sequence $\es_+^M$ (see \cite[\S2]{operator_mice},
 \cite[\S\S1.1.2, 1.1.6]{premouse_inheriting}),
 and we adopt a simplified version of the Mitchell-Steel fine structure of \cite{fsit},
 which avoids the parameters
$u_n$, as explained in \S\ref{sec:fs}.
By \ref{thm:fs}, this change actually has no impact
on the fine structural notions
such as standard parameters, $k$-soundness, etc. In \S\S\ref{sec:introduction}--\ref{sec:direct_con_stack}, when we write $p_k$ (the $k$th standard parameter), we mean the object defined as $q_k$ in \S\ref{sec:fs}, as opposed to what is defined as $p_k$ in \S\ref{sec:fs}. Moreover, we drop the ``q-'' from the fine structure terminology  introduced in \S\ref{sec:fs}.
Because of the change, we use the notation $\Hull_{k+1}^M(X)$ and $\cHull_{k+1}^M(X)$
as defined in \ref{dfn:Hull_k+1},
not as in \cite{extmax}.

For a structure $M$, $\univ{M}$ denotes the universe of $M$, and $\J(M)$ denotes the rudimentary closure of $M\cup\{M\}$. 

Let $N$ be a premouse, $\es=\es^N$ and $\es_+=\es_+^N$.
 Given $\alpha\leq\OR^N$,
we write $N|\alpha=(\J_\alpha^{\es},\es\rest\alpha,\es_+(\alpha))$ for the initial segment of $N$ of ordinal height $\alpha$, including its active extender $\es_+(\alpha)$,
and $N||\alpha$ for its passivization $(\J_\alpha^{\es},\es\rest\alpha,\emptyset)$.
If $N$ is passive, then working inside $N$, $\J^\es$ also denotes $N$. We write $\esomone^N=\es\rest\om_1^N$ and $\Momone^N=N|\om_1^N$.

 Let $n<\om$. We say that $N$ satisfies \emph{$(n+1)$-condensation}
 iff $N$ is $n$-sound and whenever $H$ is $(n+1)$-sound and $\pi:H\to N$ is
 $n$-lifting (see \cite[Definition 2.1]{premouse_inheriting})
 and $\rho_{n+1}^H\leq\crit(\pi)$, then either $H=\core_{n+1}(N)$ or $H\pins N$
 or, letting $\rho=\rho_{n+1}^H$, $N|\rho$ is active with extender $E$ and 
$H\pins\Ult(N|\rho,E)$. (See \cite[Theorem 5.2]{premouse_inheriting}.) We say $N$ satisfies \emph{$\om$-condensation}
iff it satisfies $(n+1)$-condensation for all $n<\om$.

Regarding (generalized) solidity, see Definition \ref{dfn:solidity} and \cite[\S1.1.3]{premouse_inheriting}.

We say that $N$ is an \emph{$\om$-premouse} iff $N$ is $\om$-sound and 
$\rho_\om^N=\om$; in this case we let $\deg(N)$ denote the least $n$ such that 
$\rho_{n+1}^N=\om$. An \emph{$\om$-mouse} is an $(\om,\om_1+1)$-iterable $\om$-premouse.
If $N$ is an $\om$-mouse, we write $\Sigma_N$ for the unique $(\om,\om_1+1)$-strategy for $N$.

For $\alpha<\OR^N$, recall that $\alpha$ is a \emph{cutpoint} of $N$ iff 
for all $E\in\es_+^N$,
if $\crit(E)<\alpha$ then $\lh(E)\leq\alpha$.

For an extender $E$, $t_E$ and $\tau_E$ denote the Dodd parameter
and Dodd projectum of $E$ respectively, if they are defined.

\section{Extender maximality}\label{sec:extender_maximality}

In this section we prove Theorems \ref{thm:strong_extender_in_sequence}
and \ref{thm:proj_to_strength}. The proofs are refinements of less general results proved in 
\cite{mim}.
Toward these proofs, we begin with a lemma which helps us to find sound hulls of 
premice; the proof is basically as in \cite[Lemma 3.1]{extmax},
but here we use the fact that condensation
follows from normal iterability in order to reduce our assumptions.

\begin{dfn}\label{dfn:solidity}
 Let $k<\om$, let $H$ be $k$-sound, $q\in[\rho_0^H]^{<\om}$ and $\alpha\in\OR^H$.
 The \emph{$(k+1)$-solidity witness for $(H,q,\alpha)$},
 (or just \emph{for $(q,\alpha)$}), is
 \[ w^H_{k+1}(q,\alpha)\eqdef\Th_{k+1}^H(\alpha\cup\{q,\pvec_k^H\}). \]
 Letting $q=\{q_0,\ldots,q_{\lh(q)-1}\}$ with $q_i>q_{i+1}$,
 the (set of all) \emph{$(k+1)$-solidity witnesses for $(H,q)$}
 (or just \emph{for $q$})
is
\[ w^H_{k+1}(q)\eqdef\{w_{k+1}^H(q\rest i,q_i)\}_{i<\lh(q)}\]
where $q\rest i=\{q_0,\ldots,q_{i-1}\}$.
The (set of all) \emph{$(k+1)$-solidity witnesses for $H$}
is
\[ w^H_{k+1}\eqdef w^H_{k+1}(p_{k+1}^H).\qedhere\]
\end{dfn}

Note that in the preceding definition, we are not assuming that the solidity witnesses
in consideration are in $H$.

\begin{dfn}
Let $k<\om$, let $H$ be $(k+1)$-sound, $q\in\core_0(H)$, $\theta<\rho_0^H$,
\[ \Hbar=\cHull_{k+1}^H(\theta\un\{\pvec_k^H,q\}), \]
$\pi:\Hbar\to H$ be the uncollapse and $\pi(\qbar)=q$. We say that $(\theta,q)$ is 
\emph{$(k+1)$-self-solid \tu{(}for $H$\tu{)}} iff $\Hbar$ is $k+1$-sound and 
$\rho_{k+1}^\Hbar=\theta$ and 
$p_{k+1}^\Hbar=\bar{q}$.

Let $x\in\core_0(H)$ and $r\in[\rho_0^H]^{<\om}$. We say that $r$ is an $\rSigma_{k+1}^H(\{x\})$-generator 
iff for every $\gamma\in r$, we have
\[ \gamma\notin\Hull_{k+1}^H(\gamma\un\{\pvec_k^H,x,r\backslash\{\gamma\}\}).\qedhere\]
\end{dfn}

\begin{lem}\label{lem:sound_hull}
 Let $k<\om$ and let $H$ be $(k+1)$-sound and $(k,\om_1+1)$-iterable.
 Let $r\in \core_0(H)$ and 
$\theta\leq\rho_{k+1}^H$ be a cardinal of $H$. Then there is $q\in H$ such 
that:
\begin{enumerate}[label=--]
 \item $(\theta,q)$ is $(k+1)$-self-solid for $H$,
 \item $p_{k+1}^H=q\backslash\min(p_{k+1}^H)$,
 \item $r\in\Hull_{k+1}^H(\theta\un\{\pvec_k^H,q\})$, and
 \item $\Hbar=\cHull_{k+1}^H(\theta\un\{\pvec_k^H,q\})\ins H$.
\end{enumerate}
\end{lem}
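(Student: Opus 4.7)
The plan is to construct $q$ as a lex-minimal finite extension of $p_{k+1}^H$ by ordinals in $[\theta,\min(p_{k+1}^H))$, verify self-solidity using that minimality, and then appeal to $(k+1)$-condensation, which by the definition in \S\ref{ssec:ntn} is available once $H$ is $(k,\om_1+1)$-iterable. Structurally this follows \cite[Lemma 3.1***]{extmax}; the only change is that iterability furnishes condensation instead of our assuming it outright.

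Concretely, set $\mu=\min(p_{k+1}^H)$ if $p_{k+1}^H\neq\emptyset$, else $\mu=\rho_0^H$. By $(k+1)$-soundness of $H$ we have $r\in\Hull_{k+1}^H(\mu\cup\{\pvec_k^H,p_{k+1}^H\})$, so the collection of finite descending sequences $s$ of ordinals in $[\theta,\mu)$ satisfying
\[ r\in\Hull_{k+1}^H(\theta\cup\{\pvec_k^H,p_{k+1}^H\}\cup\range(s)) \]
is nonempty. Pick $s$ lex-minimal, and set $q=p_{k+1}^H\cup\range(s)$. The third bullet of the lemma is then immediate, and $p_{k+1}^H=q\cut\min(p_{k+1}^H)$ holds since $\range(s)\sub\mu$ while $p_{k+1}^H$ consists of ordinals $\geq\mu$.

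To verify that $(\theta,q)$ is $(k+1)$-self-solid for $H$, write $\Hbar=\cHull_{k+1}^H(\theta\cup\{\pvec_k^H,q\})$, let $\pi:\Hbar\to H$ be the uncollapse, and set $\qbar=\pi^{-1}(q)$. Pulling lex-minimality of $s$ back through $\pi$ forces each element of $\qbar$ above $\theta$ to be an $\rSigma_{k+1}^{\Hbar}$-generator relative to the larger parameters, so $\rho_{k+1}^{\Hbar}\leq\theta$; the reverse inequality is clear since $\theta\sub\Hbar$. A parallel minimality argument, combined with the $<$-minimality of $p_{k+1}^H$ in $H$, gives $p_{k+1}^{\Hbar}=\qbar$. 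For $(k+1)$-solidity of $\Hbar$ the witnesses split in two: those for the $p_{k+1}^H$-part pull back from $H$'s own solidity witnesses (which lie in $H$ by $(k+1)$-soundness), and those for the $\range(s)$-part are $\rSigma_{k+1}^{\Hbar}$-definable from strictly smaller parameters by minimality of $s$, hence lie in $\Hbar$.

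Finally, $\pi$ is $(k+1)$-elementary — in particular $k$-lifting — with $\crit(\pi)\geq\theta=\rho_{k+1}^{\Hbar}$, so $(k+1)$-condensation yields the dichotomy: either $\Hbar\ins H$, which is the desired conclusion, or $H|\theta$ is active with some extender $E$ and $\Hbar\pins\Ult(H|\theta,E)$. The main obstacle I expect is ruling out this second alternative, and the plan is to exploit the hypothesis that $\theta$ is a cardinal of $H$. Under MS indexing an active $E$ on $H|\theta$ must satisfy $\lh(E)=\theta$ and $\nu_E<\theta$, so $E$ is identifiable in $H$ from a single ordinal below $\theta$; the top generators of $\qbar$ coded by $s$ in $[\theta,\mu)$ should then be redundantly definable, contradicting the lex-minimality of $s$. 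This eliminates the ultrapower case and completes the proof.
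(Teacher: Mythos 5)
Your high-level plan is right: extend $p_{k+1}^H$ by finitely many ordinals below $\rho_{k+1}^H$, then invoke condensation. But the way you choose the extension — a single lex-minimal $s$ subject only to capturing $r$ — does not deliver self-solidity, and this is exactly where all the work lies.

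The step ``those for the $\range(s)$-part are $\rSigma_{k+1}^{\Hbar}$-definable from strictly smaller parameters by minimality of $s$, hence lie in $\Hbar$'' is the gap. Lex-minimality of $s$ gives you that each element of $s$ is an $\rSigma_{k+1}$-generator relative to what lies below it; it does \emph{not} give you that the theory $\Th_{k+1}^{\Hbar}(\qbar_i\cup\{\qbar\rest i,\pvec_k^{\Hbar}\})$ is an \emph{element} of $\Hbar$. Those are different properties, and the second is exactly what $(k+1)$-solidity demands. The paper's proof constructs $q$ element by element precisely so that the solidity witnesses propagate: at stage $i$ one first chooses $\eta$ to be the least generator above $\theta$ with $u_i,r\in H_i\eqdef\Hull_{k+1}^H((\eta+1)\cup\{\pvec_k^H,q\rest i\})$ (where $u_i$ is the set of solidity witnesses for $q\rest i$, explicitly tracked in the induction), then sets $q_i=\min(\OR\cut H_i)$ and $\gamma_{i+1}=\card^H(q_i)$. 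The key sublemma is that $W_i$, the transitive collapse of $H_i$, is a proper segment of $H$; this is what puts $u_{i+1}\in H_i'$. That sublemma is proved via condensation, and the ultrapower alternative is ruled out by the generator condition $\eta\notin\Hull_{k+1}^H(\eta\cup\{\pvec_k^H,q\rest i\})$, not by any indexing argument about $H|\theta$. Your proposal has no analogue of this: you never see the witnesses go into the hull, so you cannot conclude $\Hbar$ is $(k+1)$-sound.

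Two smaller points. First, termination: the paper's construction terminates because the $\gamma_i$'s form a strictly decreasing sequence of $H$-cardinals hitting $\theta$; in your setup this is replaced by well-foundedness of the lex order, which is fine, but the bookkeeping that makes the witnesses accumulate is lost in the process. Second, your proposed way of excluding the ultrapower alternative at the very end (via indexing considerations on $H|\theta$ and ``redundant definability'' of the top of $\qbar$) is vague and not where the paper needs it; the real exclusion happens inside the induction for the $W_i$'s, using the generator condition. The final ``$\Hbar\ins H$'' then just follows from condensation once the soundness of $\Hbar$ at $\theta$ is in hand.
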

\begin{proof}
We may assume $H$ is countable and $\theta<\rho_{k+1}^H$. We will define $m<\om$ and
\[ q=(q_0,q_1,\ldots,q_{m-1}),\] 
with $q_i>q_{i+1}$ for $i+1<m$. Let $p=p_{k+1}^H$. We start with $q\rest\lh(p)=p$. We define 
$q_i$ for $i\geq\lh(p)$ by induction on $i$, with $q_i<\rho_{k+1}^H$.
We simultaneously define an $H$-cardinal
$\gamma_i$, with $\gamma_{\lh(p)}=\rho_{k+1}^H$
and $\theta\leq\gamma_{i}\leq q_{i-1}$ for $i>\lh(p)$, and
\[ u_i,r\in\Hull_{k+1}^H(\gamma_i\un\{\pvec_k^H,q\rest i\}), \]
where $u_i$ is the set of $(k+1)$-solidity witnesses for $(H,q\rest i)$. Now let $i\geq\lh(p)$, and let 
$q\rest i$, $\gamma_i$ be given. If $\gamma_i=\theta$ then we set $m=i$, so 
$q=q\rest i$ and we are done. So suppose $\gamma_i>\theta$. Let $\eta<\gamma_i$ be least such that
$\eta>\theta$ and $\eta$ is not a cardinal of $H$ and
\begin{equation}\label{eqn:def_H_i} u_i,r\in H_i\eqdef\Hull_{k+1}^H((\eta+1)\un\{\pvec_k^H,q\rest i\}) \end{equation}
and
\begin{equation}\label{eqn:eta_generator} \eta\notin\Hull_{k+1}^H(\eta\un\{\pvec_k^H,q\rest i\}). 
\end{equation}
Let $q_i=\min(\OR\backslash H_i)=\gamma_i^{+H}\cap H_i$ and let $\gamma_i=\card^H(q_i)=\card^H(\eta)$.

Clearly
\[ H_i\psub H_i'=\Hull_{k+1}^H(\gamma_i\un\{\pvec_k^H,q\rest(i+1)\}), \]
so it suffices to see that $u_{i+1}\in H_i'$. Note that the transitive collapse $W_i$ of $H_i$ 
is (equivalent to) the $(k+1)$-solidity witness for $(q\rest i,q_i)$, so it suffices to see that $W_i\in H_i'$.
For this, noting that $q_i=\gamma_i^{+W_i}$, it suffices to see that $W_i\pins H$, since then $W_i$ is the least segment $W$ of $H$ 
such that $\OR^W\geq q_i$ and $\rho_\om^W=\gamma_i=\lgcd(H|q_i)$.

Let $\rho=q_i$ and $\gamma=\gamma_i$ and $W=W_i$. Let $\pi:W\to H$ be the uncollapse. Then 
$\pi(p_{k+1}^W\backslash\rho)=q\rest i$ and $W$ is $\rho$-sound and $\crit(\pi)=\rho$ and 
\[ \rho>\rho_{k+1}^{W}=\gamma=\lgcd(W|\rho) \]
($\rho_{k+1}^{W}\geq\gamma$ because $W\in 
H$, and $\rho_{k+1}^W<\rho$ because $\eta<\rho$
and by line (\ref{eqn:def_H_i})). So by condensation as stated in \cite[Theorem 5.2]{premouse_inheriting}, either (a) $W\pins H$ or (b)
letting $J\pins H$ be least such that $q_i\leq\OR^J$ and $\rho_\om^J=\gamma$,
then $\rho_{k+1}^J=\gamma<\rho_k^J$ and there is a type 1 extender $F$ over $J$
with $\crit(F)=\gamma$ and
$W=\Ult_k(J,F)$.
But since $\eta>\gamma$ and 
because of line (\ref{eqn:eta_generator}), we have
\[ \eta\notin\Hull_{k+1}^W((\gamma+1)\un\{\pvec_k^W,p_{k+1}^W\backslash\rho\}), \]
and therefore (b) is false. So $W\pins H$, as required.

Since $\gamma_{i+1}<\gamma_i$, the construction terminates successfully.

Finally, the fact that $\Hbar\ins H$ (where $\Hbar$ is defined in the statement of the theorem) 
follows from condensation.
\end{proof}
Related calcuations also give the following:
\begin{lem}\label{lem:hulls_proper_segs}
 Let $k<\om$ and let $H$ be $(k+1)$-sound and $(k,\om_1+1)$-iterable. Suppose $\rho=\rho_{k+1}^H=\kappa^{+H}>\om$ and $\kappa$ is an $H$-cardinal.
For $\gamma<\rho$ let
\[ H_\gamma=\Hull_{k+1}^H(\gamma\un\{\pvec_{k+1}^H\}) \]
and $W_\gamma$ be the transitive collapse of $H_\gamma$.
Then:
\begin{enumerate}[label=\tu{(}\roman*\tu{)}]
 \item\label{item:ev_prop_seg_M_or_ult} 
For all sufficiently large $\gamma\in(\kappa,\rho)$, 
either:
\begin{enumerate}[label=--]
\item $W_\gamma\pins H$, or
\item $\kappa^{+W_\gamma}=\rho_\om^{W_\gamma}=\gamma$, $H|\gamma$ is active with $E$\footnote{Note then that $\crit(E)<\kappa$
 and $E$ is $H$-total.} and
$W_\gamma\pins\Ult(H,E)$.
\end{enumerate}
\item\label{item:cof_prop_seg_M}For cofinally many $\gamma<\rho$, we have $W_\gamma\pins H$
 and $\rho_{k+1}^{W_\gamma}=\kappa$.
 \end{enumerate}
\end{lem}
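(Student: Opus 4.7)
The approach parallels the proof of Lemma~\ref{lem:sound_hull}: for each $\gamma\in(\kappa,\rho)$ I apply $(k+1)$-condensation to the uncollapse $\pi_\gamma\colon W_\gamma\to H$. First I verify the hypotheses: $W_\gamma$ is $(k+1)$-sound with $\pvec_{k+1}^{W_\gamma}=\sigma(\pvec_{k+1}^H)$ (where $\sigma=\pi_\gamma^{-1}$), since $\pvec_{k+1}^H\in H_\gamma$; the map $\pi_\gamma$ is $k$-lifting with $\crit(\pi_\gamma)\geq\gamma$; and $\rho_{k+1}^{W_\gamma}\leq\gamma$, because $W_\gamma=\cHull_{k+1}^{W_\gamma}(\gamma\un\{\pvec_{k+1}^{W_\gamma}\})$. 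Condensation then yields the dichotomy: either $W_\gamma\pins H$, or, writing $\rho'=\rho_{k+1}^{W_\gamma}$, the level $H|\rho'$ is active with some extender $E$ and $W_\gamma\pins\Ult(H|\rho',E)$.

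For~\ref{item:ev_prop_seg_M_or_ult}, the task is to show that for $\gamma$ large enough, the ultrapower alternative forces $\rho'=\gamma$ together with the stated equalities. Since $\crit(\pi_\gamma)\geq\gamma$, the cardinals of $W_\gamma$ below $\gamma$ coincide with those of $H$ below $\gamma$; but $H$ has no cardinals in $(\kappa,\rho)$, so the only candidate $W_\gamma$-cardinal in $(\kappa,\gamma]$ is $\gamma$ itself. In the ultrapower alternative, $\rho'$ is the natural length of $E$, hence a cardinal of $\Ult(H|\rho',E)$, and therefore of $W_\gamma$. Combined, these observations force $\rho'=\gamma$, giving that $H|\gamma$ is active with $E$ and $(\kappa^+)^{W_\gamma}=\gamma$. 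Finally $\rho_\om^{W_\gamma}=\gamma$ follows from the standard fine-structural analysis of the first proper segment of $\Ult(H|\gamma,E)$ above $\gamma$, which is precisely $W_\gamma$. The ``sufficiently large $\gamma$'' clause controls this case distinction and excludes pathologies at small $\gamma$ (where the canonical bijection $\kappa\to\gamma$ might be absorbed into $H_\gamma$, yielding $\rho'<\gamma$).

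For~\ref{item:cof_prop_seg_M}, I produce cofinally many $\gamma$ witnessing the claim. For each $\xi\in(\kappa,\rho)$ set $\gamma=\xi+1$; then $\gamma$ is $\Sigma_1$-definable from $\xi$, so $\gamma\in H_\gamma$, and the $H$-least bijection $g\colon\kappa\to\gamma$ lies in $H_\gamma$. Composing $\sigma(g)$ with the $\Sigma_{k+1}^{W_\gamma}$ Skolem surjection $\gamma\twoheadrightarrow W_\gamma$ produces a $\Sigma_{k+1}^{W_\gamma}$-definable surjection $\kappa\twoheadrightarrow W_\gamma$; since $\crit(\pi_\gamma)>\kappa$, $\kappa$ is still a cardinal of $W_\gamma$, so $\rho_{k+1}^{W_\gamma}=\kappa$. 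Applying condensation at $\rho'=\kappa$ either yields $W_\gamma\pins H$ directly, or places $W_\gamma$ inside $\Ult(H|\kappa,E)$ for some $E$ active on $H|\kappa$; in the latter case, coherence of the ultrapower with $H$ together with the Mitchell-Steel indexing arithmetic again identifies $W_\gamma$ as a proper initial segment of $H$. Because the chosen $\gamma=\xi+1$ are cofinal in $\rho$, the claim follows.

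The main obstacle is pinning down $\rho_{k+1}^{W_\gamma}$ in the two alternatives of condensation. For~\ref{item:ev_prop_seg_M_or_ult} one must rule out $\rho'<\gamma$ in the ultrapower case, which the ``sufficiently large'' threshold handles by preventing bijections $\kappa\to\gamma$ from entering $H_\gamma$ prematurely. For~\ref{item:cof_prop_seg_M} one must ensure that the active-$H|\kappa$ scenario still yields $W_\gamma\pins H$, via the coherence of $\Ult(H|\kappa,E)$ with $H$ and the arithmetic of Mitchell-Steel indexing.
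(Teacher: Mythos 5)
Your attempt applies $(k+1)$-condensation to the uncollapses $\pi_\gamma$, which is the right starting point, but it skips the organizing device the paper builds the whole argument around: the notion of a \emph{generator} (an ordinal $\gamma<\rho$ with $\gamma\notin H_\gamma$), which form a club in $\rho$, classified as limit or successor generators, with the threshold $\eta_0$ taken to be the least generator $>\kappa$ whose hull $H_{\eta_0}$ captures the solidity witnesses $w_{k+1}^H$; the paper then argues by induction along the generators. This is not cosmetic, because your proof has genuine gaps that the induction is designed to close. For part (i), your cardinality argument establishes only that $\rho'=\rho_{k+1}^{W_\gamma}\leq\kappa$ or $\rho'=\gamma$, not $\rho'=\gamma$: condensation alternative (b) with $\rho'=\kappa$ and $H|\kappa$ active remains on the table, and in that case $W_\gamma\pins\Ult(H|\kappa,E)$ satisfies neither clause of the lemma (in particular $\rho_\om^{W_\gamma}\leq\kappa<\gamma$). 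Your ``sufficiently large'' caveat, invoked without specifying a threshold, does not rule this out; in the paper, limit generators land in the sound case with $\rho_{k+1}^{W_\gamma}=\gamma$, and for successor generators one shows $\rho_{k+1}^{W_\gamma}=\kappa$ and then rules out (b) outright via the argument of Lemma \ref{lem:sound_hull}. Incidentally, your assertion that ``$\rho'$ is the natural length of $E$'' is wrong: in alternative (b), $\rho'=\lh(E)$ is the index of $E$.

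A more serious problem: you claim $W_\gamma$ is $(k+1)$-sound with $p_{k+1}^{W_\gamma}=\sigma(p_{k+1}^H)$ merely because $\pvec_{k+1}^H\in H_\gamma$, but that does not follow. The collapse of $p_{k+1}^H$ is a generating parameter for $W_\gamma$, yet need not be the standard one; to identify $p_{k+1}^{W_\gamma}$ and verify the $(k+1)$-soundness needed to invoke $(k+1)$-condensation in the first place, one needs $w_{k+1}^H\in H_\gamma$, i.e.\ $\gamma>\eta_0$. Finally, for part (ii), your dismissal of condensation alternative (b) is incorrect: you assert that if $W_\gamma\pins\Ult(H|\kappa,E)$ then ``coherence of the ultrapower with $H$ together with the Mitchell-Steel indexing arithmetic again identifies $W_\gamma$ as a proper initial segment of $H$,'' but coherence gives only $\Ult(H|\kappa,E)|\kappa=H|\kappa$; above $\lh(E)=\kappa$ the segments of the ultrapower need not be segments of $H$. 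The correct move, as in the proof of Lemma \ref{lem:sound_hull} and as used for successor generators here, is to exhibit an ordinal --- the previous generator --- that lies in $W_\gamma$ but not in $\Hull_{k+1}^{W_\gamma}((\rho'+1)\cup\{\pvec_k^{W_\gamma},p_{k+1}^{W_\gamma}\})$, thereby \emph{ruling out} alternative (b) rather than absorbing it.
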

\begin{proof}
For $\gamma\in(\kappa,\rho)$, say that $\gamma$ is a \emph{generator} iff $\gamma\notin H_\gamma$.
We say that a generator is a \emph{limit generator} iff it is a limit of generators,
and is otherwise a \emph{successor generator}.
Note that the set of generators above $\kappa$ is club in $\rho$. 
Let $\gamma>\kappa$ be a generator. Then note that $W_\gamma\in H$ and
$\gamma=\kappa^{+W_\gamma}$ so $\rho_{k+1}^{W_\gamma}\in\{\kappa,\gamma\}$; moreover, if $\gamma$ is a successor generator
then $\rho_{k+1}^{W_\gamma}=\kappa$. 

Now let $\eta_0$ be the least generator $\gamma>\kappa$ such that
 $w_{k+1}^H\in H_\gamma$.
We claim that the conclusion of \ref{item:ev_prop_seg_M_or_ult} holds for all generators $\gamma>\eta_0$. We proceed by 
induction on $\gamma$.

First suppose that $\gamma$ is a limit generator. Then by induction,
for eventually all successor generators $\gamma'<\gamma$, we have
$W_{\gamma'}\pins H$ and $\gamma'=\kappa^{+W_{\gamma'}}$ and
$W_{\gamma'}$ projects to $\kappa$. It follows that $W_{\gamma'}\in H_\gamma$,
so $W_{\gamma'}\in W_\gamma$, which implies that $\rho_{k+1}^{W_{\gamma}}=\gamma$,
and therefore $W_\gamma$ is $(k+1)$-sound. So the conclusion for $W_\gamma$ follows 
from $(k+1)$-condensation.

Now suppose that $\gamma$ is a successor generator. Then there is a largest generator 
$\eta<\gamma$, and we have $\kappa<\eta_0\leq\eta<\gamma$,
and $W_\gamma$ projects to $\kappa$.
So using
condensation (as stated in \cite[Theorem 5.2]{premouse_inheriting}) as in the proof of \ref{lem:sound_hull},
we get $W_\gamma\pins H$. 

Part \ref{item:cof_prop_seg_M} now easily follows;
in fact its conclusion holds for every sufficiently large successor generator.
\end{proof}

\begin{rem}
 Let $M$ be an $m$-sound premouse.
 Recall that a (putative) iteration tree on $M$ is \emph{$m$-maximal}
 given that (i) $\Tt$ satisfies the \emph{monotone length condition}
 \[ \lh(E^\Tt_\alpha)\leq\lh(E^\Tt_\beta)\text{ for all }\alpha+1<\beta+1<\lh(\Tt),\]
 and for each $\alpha+1<\lh(\Tt)$, (ii) $\gamma=\pred^\Tt(\alpha+1)$
 is least such that $\crit(E^\Tt_\alpha)<\nu(E^\Tt_\gamma)$,
 (iii) 
 $M^{*\Tt}_{\alpha+1}\ins M^\Tt_\gamma$ is as large as possible, and 
 (iv) $k=\deg^\Tt_{\alpha+1}$ is as large as possible subject to the choice of $M^{*\Tt}_{\alpha+1}$ (with $k\leq\deg^\Tt_\gamma$
 if $M^{*\Tt}_{\alpha+1}=M^\Tt_\gamma$).
\end{rem}

\begin{dfn}Let $M$ be an $m$-sound premouse.
An \emph{essentially $m$-maximal tree} on $M$
satisfies the requirements of $m$-maximality,
except that we drop the monotone length condition,
replacing it with the \emph{montone $\nu$ condition}, that is, that
\[ \nu(E^\Tt_\alpha)\leq\nu(E^\Tt_\beta)\text{ for all }\alpha+1<\beta+1<\lh(\Tt).\qedhere\]
\end{dfn}

\begin{rem}\label{rem:essential_m-max_iter}
It is easy to see that, for example, $(m,\om_1+1)$-iterability
is equivalent to essential-$(m,\om_1+1)$-iterability.
\end{rem}

\begin{dfn}
 Let $\pi:\core_0(M)\to\core_0(N)$ be
 $\Sigma_0$-elementary between premice $M,N$ of the same type.
 
 If $M,N$ are passive then $\psi_\pi$ denotes $\pi$.
 If $M,N$ are active, $\mu=\crit(F^M)$ and $\kappa=\crit(F^N)$, then
 \[ \psi_\pi:\Ult(M|\mu^{+M},F^M)\to\Ult(N|\kappa^{+N},F^N) \]
 denotes the embedding induced by the Shift Lemma from $\pi$.
 So in both cases, $\pi\sub\psi_\pi$ and $\psi_\pi$ is fully elementary.
 
 Now we say that $\pi$ is:
 \begin{enumerate}[label=--]
  \item \emph{$\nu$-low} iff $M,N$ are type 3 and $\psi_\pi(\nu^M)<\nu^N$,
  \item \emph{$\nu$-preserving} iff [if $M,N$ are type 3 then $\psi_\pi(\nu^M)=\nu^N$], and
  \item \emph{$\nu$-high} iff $M,N$ are type 3 and $\psi_\pi(\nu^M)>\nu^N$.\qedhere
 \end{enumerate}
\end{dfn}

\begin{rem}
Suppose $\pi,M,N$ are as above and $M,N$ are type 3.
It is easy to see that if $\pi$ is $\rSigma_2$-elementary then $\pi$ is $\nu$-preserving,
and if $\pi$ is $\rSigma_1$-elementary then $\pi$ is non-$\nu$-low.
Moreover, one can show that if $\pi=\pi_0$ is a $\nu$-preserving
near $k$-embedding, then the copying construction for $k$-maximal (or essentially $k$-maximal) trees with $\pi$ preserves
tree order, and for each $\alpha$, $\pi_\alpha$ is a $\nu$-preserving
near $\deg^\Tt_\alpha$-embedding. (Here if $M^\Tt_\alpha$
is type 3 and $\rho_0(M^\Tt_\alpha)<\lh(E^\Tt_\alpha)<\OR(M^\Tt_\alpha)$
then we copy $E^\Tt_\alpha$ to $E^\Uu_\alpha=\psi_{\pi_\alpha}(E^\Tt_\alpha)$.)
\end{rem}

\begin{dfn}
 Let $M$ be an active premouse, $F=F^M$ and $\kappa=\crit(F)$.
 We say $F$ is of \emph{superstrong type}
 iff $i^M_F(\kappa)$ is the largest cardinal of $M$.
 We say a premouse $N$ is \emph{below superstrong}
 iff no $E\in\es_+^N$ is of superstrong type.
\end{dfn}

We will primarily be interested in extenders whose support
is of form $\sigma\cup t$ where $\sigma\in\OR$ and $t$ is a finite set of ordinals.  However, we do not want to assume explicitly that the ``ordinals'' in $t$ are in fact wellfounded, and we want to allow that there are other ``ordinals''
of the ultrapower below ``$\max(t)$'' which are not themselves in the support. (This can be the case, for example, for an extender of form $E\rest(\tau_E\cup t_E)$.)
Toward this we adopt the following terminology:

\begin{dfn}\label{dfn:standard_ext}
 Say a short extender $E$ over $\kappa=\crit(E)$ is \emph{standard}
 if it has support $\sigma+n$
 for some limit ordinal
 $\sigma$ and $n<\om$,
 (so $E=\left<E_a\right>_{a\in[\sigma+n]^{<\om}}$), $[\kappa]^{|a|}\in E_a$ for each $a\in[\sigma+n]^{<\om}$,
 and letting $\id':[\kappa]^1\to\kappa$ be the function $\id'(\{\alpha\})=\alpha$, for each $\alpha<\sigma$, we have
 $[\{\alpha\},\id']_E=\alpha$. (We follow the usual conventions regarding how $E_{a\cup b}$ projects to $E_a$ when $a,b\in[\sigma+n]^{<\om}$.
 So for each $\alpha<\sigma$ and $m_0\leq m_1<n$,
 $[\{\sigma+m_i\},\id']_E$ represents an ``ordinal'' $\beta_i$ of the ultrapower and the ultrapower satisfies ``$\alpha<\beta_0<\beta_1$'', though we do not assume that $\beta_0,\beta_1$ are in the wellfounded part of the ultrapower.)
\end{dfn}

 In what follows, in the above context, when $n>0$, we typically write $t$ for $\{\sigma,\ldots,\sigma+n-1\}$,
 but may also identify $t$ with the finite set $\{[\{\sigma+m\},\id']_E\bigm|m<n\}$ of ordinals of the ultrapower.

We will deduce Theorems \ref{thm:strong_extender_in_sequence} and \ref{thm:proj_to_strength} from the following:

\begin{tm}\label{tm:ext_tau_a_card}
Let $N$ be a $(0,\om_1+1)$-iterable premouse, $F\in N$ and $\mu,\sigma\in\OR^N$, $\ell<\om$
 and $W$ be such that:
 \begin{enumerate}[label=--]
 \item $\sigma$ is an $N$-cardinal,
 \item $F$ is a standard short $N$-extender
 with support $\sigma+\ell$,
 weakly amenable to $N$, coded as a subset of $N|\sigma$,
 such that $N\sats$``$F$ is countably complete'',
 \item $W=\Ult_0(N,F)$, $\mu=\crit(F)<\sigma$ and $\her_\sigma^N\sub W$.
\end{enumerate} 
 Then \tu{(}i\tu{)} $W|\sigma^{+W}=N||\sigma^{+W}$ and if $\ell=0$ then \tu{(}ii\tu{)} $F\in\es_+^{N}$.
\end{tm}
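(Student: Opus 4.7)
My plan follows the pattern of Theorem \ref{thm:strong_extender_in_sequence} from \cite{mim}: countable reduction, realization of the collapsed ultrapower back into $N$ via countable completeness, and a phalanx comparison analyzed via Lemma \ref{lem:sound_hull} and condensation. I first take a countable elementary hull $\pi:\bar N\to N$ with $\{F,\sigma,\mu\}\sub\rg\pi$; let $\bar F,\bar\sigma,\bar\mu$ be the preimages (so $\bar{\dot t}=\dot t$ since $\dot t\in V_\om$) and $\bar W=\Ult(\bar N,\bar F)$. Since $N\sats$ ``$F$ is countably complete'' and $\bar F$ has only countably many generators, a standard argument produces a realization $\tau:\bar W\to N$ with $\tau\circ j^{\bar N}_{\bar F}=\pi$ and with $\tau$ agreeing with $\pi$ on $\bar N|\bar\sigma$; here $\bar W|\bar\sigma=\bar N|\bar\sigma$ follows from $\her_{\bar\sigma}^{\bar N}\sub\bar W$ and weak amenability of $\bar F$. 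Pulling $\Sigma_N$ back via $\pi$ and $\tau$ endows $\bar N$ and $\bar W$ with $(\om,\om_1+1)$-strategies, and by elementarity it suffices to prove (i) and (ii) for the barred objects.

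Next I form the phalanx $((\bar N,0),(\bar W,0))$ with exchange ordinal $\bar\sigma$; it is iterable via the pullbacks of $\Sigma_N$, the copying argument being legitimate because $\pi$ and $\tau$ agree on $\bar N|\bar\sigma$. I compare this phalanx against $\bar N$ via essentially $m$-maximal trees $\Tt,\Uu$. Since the roots already agree below $\bar\sigma$, only extenders of length $>\bar\sigma$ can be used, and the standard analysis --- using that $\bar W$ realizes into $N$ via $\tau$, so the $\bar W$-root cannot sidestep $\bar N$ --- should show the comparison is trivial: no extender is applied to the $\bar N$-root, and no disagreement with $\bar N$ arises among initial segments of $\bar W$ of ordinal height $\leq(\bar\sigma^+)^{\bar W}$.

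To derive (i), suppose for contradiction there is a least $R\ins\bar W$ with $\OR^R\leq(\bar\sigma^+)^{\bar W}$ and $R\neq\bar N|\OR^R$. Then $R$ is sound with $\rho_\om^R\leq\bar\sigma$. Applying Lemma \ref{lem:sound_hull} I obtain a sound $\Hbar\pins R$ projecting to a cardinal $\rho\leq\bar\sigma$, with a $k$-lifting uncollapse $\Hbar\to R\ins\bar W$; composing with $\tau$ and pulling back through $\pi$ (using agreement below $\bar\sigma$) yields a $k$-lifting map $\Hbar\to\bar N$. By $(k+1)$-condensation applied inside $\bar N$, either $\Hbar\pins\bar N$ or $\Hbar\pins\Ult(\bar N|\rho,E)$ for some $E\in\es_+^{\bar N|\rho}$ with $\crit(E)=\rho$; either case contradicts the minimality of $R$. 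For (ii), when $\dot t=\emptyset$ the support of $\bar F$ equals $\bar\sigma$, so $\bar F$ is recoverable from $\bar W|(\bar\sigma^+)^{\bar W}=\bar N||(\bar\sigma^+)^{\bar W}$ via its Mitchell-Steel derivation from $j^{\bar N}_{\bar F}$; the condensation dichotomy applied at the top forces $\bar N||(\bar\sigma^+)^{\bar W}$ to be active with extender $\bar F$, yielding $\bar F\in\es_+^{\bar N}$ and hence $F\in\es_+^N$ by elementarity.

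The main obstacle I anticipate is the trivial-comparison claim in paragraph two: controlling drops and degrees on both sides of the phalanx, and ruling out that an extender of $\bar W$ with critical point $\bar\mu$ and length $>\bar\sigma$ could be applied on the $\bar W$-root side so as to produce genuine disagreement with $\bar N$. This is exactly where the countable completeness of $F$ becomes crucial, since it delivers the realization $\tau$ into $N$ itself (not merely into $W$), which in turn is what enables the $k$-lifting maps into $\bar N$ on which the condensation dichotomy operates in step three.
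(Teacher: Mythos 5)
Your proposal breaks at the central technical point: the claimed agreement between $\pi$ and the realization $\tau$ on $\bar N|\bar\sigma$ is false, and the phalanx copying cannot get off the ground without it. Concretely, the realization $\tau:\bar W\to N$ furnished by countable completeness is defined by
$\tau([a,f]_{\bar F})=\pi(f)(\rho(a))$,
where $\rho$ is the order-preserving map given by countable completeness of $F$ in $N$; this $\rho$ maps the (countably many) generators of $\bar F$ to ordinals \emph{below} $\mu=\crit(F)$, since the ultrafilters $F_a$ concentrate on $[\mu]^{|a|}$. So for $\bar\mu\leq\alpha<\bar\sigma$ one gets $\tau(\alpha)=\rho(\alpha)<\mu\leq\pi(\alpha)$, not $\tau(\alpha)=\pi(\alpha)$. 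The equation $\tau\circ j^{\bar N}_{\bar F}=\pi$ only gives agreement of $\tau$ with $\pi$ below $\bar\mu$; above $\bar\mu$ it controls $\tau\circ j$, not $\tau$ itself. Thus the base copy maps do not agree on the exchange ordinal, the Shift Lemma cannot be applied to copy extenders applied to the $\bar N$-root against extenders lifted from the $\bar W$-root, and the phalanx iterability argument fails.

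This is precisely the obstruction the paper is navigating around, and explains why it uses a structurally different setup. Rather than realizing $\bar W$ back into $N$ (which necessarily collapses generators), the paper replaces $N$ by a fine-structural countable mouse $M$ projecting to $\om$, takes a further fine hull $\bar M$, forms $\bar U=\Ult_{m-1}(\bar M,\bar E)$, and lifts phalanx trees on $((\bar M,m-1,\bar\theta),(\bar U,m-1),\bar\theta)$ into essentially $m$-maximal trees on $U=\Ult_m(M,E)$ itself. The $\bar U$-root is lifted by the Shift Lemma map $\psi:\bar U\to U$, which \emph{does} preserve generators; the $\bar M$-root is lifted into carefully chosen hulls $M'\pins U$ (or a cofinal sequence of such hulls in the case $\sup\pi``\bar\theta=\theta$), arranged so that the base copy maps agree on $\bar\theta$. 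The use of a fine-structural hull projecting to $\om$ is also essential for the endgame: it is what forces the comparison to terminate with coinciding last models, no drops, and $b^\Uu$ above $\bar U$, from which $\bar\theta=\tau$ and hence conclusion (i) are read off directly (with (ii) then following from the derived-extender analysis). An ordinary elementary hull $\bar N\prec N$ gives none of this control, and your alternative route to (i) via condensation on proper segments of $\bar W$ has no valid lifting under it. Repairing the proposal requires abandoning the realization into $N$ and adopting the lift into the ultrapower, together with the fine-structural hulls that make the comparison analysis close.
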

\begin{proof}
We may assume that $N=\J(R)$ where $F$ is definable from parameters over $R$
and $\rho_\om^R=\sigma$. Say that $F$ is $\rSigma_{n+1}^R(\{r\})$.
We may also assume inductively that all segments of $R$
satisfy the theorem. 

Let $n\ll m<\om$ and
$M=\cHull_{m+1}^R(\{s\})$
where $(\om,s)$ is $(m+1)$-self-solid for $R$
and $r\in\rg(\pi_{MR})$ where $\pi_{MR}:M\to R$ is the uncollapse.

Let $E=(\pi_{MR}^{-1})``F$. So $E$ is a
standard $M$-extender with support $\tau+\ell$,
where  either $\tau<\rho_0^M$ and $\pi_{MR}(\tau)=\sigma$,
 or $\tau=\rho_0^M$ and $\rho_0^R=\sigma$. And
$E$ is $\bfrSigma_{n+1}^M$-definable,
$M$ is $(m+1)$-sound, $n+10< m$ and
\[ \rho_{m+1}^M=\om<\kappa=\crit(E)<\tau=\rho_{m}^M=\rho_{n+1}^M. \]
Other relevant properties of $(R,F)$ also reflect to $(M,E)$.
Moreover,
\begin{equation}\label{eqn:U_iter} U=\Ult_m(M,E)\text{ is wellfounded and }(m,\om_1+1)\text{-iterable},\end{equation}
by the countable completeness of $F$ in $N$ and the $(\om,\om_1+1)$-iterability of $R$.

Now $\tau<\OR^M$. For suppose $\tau=\OR^M$.
Since $\rho_{n+10}^M=\tau$, therefore $M$ is passive.
If $\tau=\kappa^{+M}$ (that is, $\kappa$ is the largest cardinal of $M$),
then we have $U|\kappa^{+M}=M|\kappa^{+M}=M$ (by condensation for $M$),
but then $E\in U$, which is impossible. So $\tau>\kappa^{+M}$.
Then $E\rest\eta\in M$ for all $\eta<\tau$ (since $\rho_{n+10}^M=\tau$),
so by induction (with conclusion (i) of the theorem), $U|\tau=M|\tau=M$, so again, $E\in U$, a contradiction.

Let $\theta$ be the largest $M$-cardinal $\leq\tau$ such that
$M|\theta=U|\theta$.\footnote{We will show that $\theta=\tau$.}
Let $t$ be $(m,\om)$-self-solid for $M$,
and such that letting
\[ \bar{M}=\cHull_m^M(\{t\}) \]
and $\pi:\bar{M}\to M$ be the uncollapse, then $r,s\in\rg(\pi_{MR}\com\pi)$,
and $\tau\in\rg(\pi)$ if $\tau<\rho_0^M$, and $\theta\in\rg(\pi)$ if $\theta<\rho_0^M$. Let $\pi(\bar{t})=t$, etc, and $\bar{E}=(\pi^{-1})``E$, etc.
So $\bar{E}$ is defined over $\bar{M}$ from $\bar{t}$ just as $E$ is over $M$ from $t$,
and the relevant properties of $(M,E,U)$ reflect to $(\bar{M},\bar{E},\bar{U})$, where
$\bar{U}=\Ult_{m-1}(\bar{M},\bar{E})$.

Let $\pi(\bar{\theta})=\theta$ if $\theta<\rho_0^M$,
and otherwise $\bar{\theta}=\rho_0^{\bar{M}}$.
Likewise for $\bar{\tau}$.
So $\bar{\theta},\bar{\tau}$ have the same defining properties with respect to $\bar{M},\bar{U}$.
Define the phalanx (see \cite[\S1.1]{extmax} for the notation)
\[\ph=((\bar{M},m-1,\bar{\theta}),(\bar{U},m-1),\bar{\theta}).\]

\begin{clm} $\ph$ is $(\om_1+1)$-iterable. 
\end{clm}
\begin{proof}
We will lift trees on $\ph$ to essentially $m$-maximal trees on $U$,
which by \ref{rem:essential_m-max_iter} and line (\ref{eqn:U_iter}) suffices. 
Let $\psi:\bar{U}\to U$ be the Shift Lemma map.
Let $\theta'=\sup\pi``\bar{\theta}$.

\begin{case}
 $\theta'<\theta$.

Let $\gamma=\card^{M}(\theta')$, so $\gamma<\theta$.
Let $t'$ be 
such that $(\gamma,t')$ is $m$-self-solid for $M$,
with
\[ M'=\cHull^{M}_m(\gamma\cup\{t'\})\pins M, \]
and $\widetilde{\pi}:M'\to M$ be the uncollapse,
such that $t\in\rg(\widetilde{\pi})$.
Let 
$\pi':\bar{M}\to M'$
be $\pi'=\widetilde{\pi}^{-1}\com\pi$.
So
\[ \pi'\rest\bar{\theta}=\pi\rest\bar{\theta}=\psi\rest\bar{\theta}.\]
Note that $\OR^{M'}<\theta$, so $M'\pins U$.

We can use $(\pi',\psi)$ to lift trees $\Tt$ on $\ph$ to essentially $m$-maximal
trees $\Uu$ on $U$.
In case $\theta$ is a limit cardinal of $M$ then everything here is routine (and we actually get $m$-maximal trees
on $U$). So assume that $\theta=\gamma^{+M}$.
Most of the details of the copying process are routine,
but we explain enough that we can point out how the wrinkles are dealt with.
Let $\pi(\bar{\gamma})=\gamma$.
For $\alpha<\lh(\Tt)$ with $\alpha>0$,
let $\mathrm{root}^\Tt_\alpha=0$ if $M^\Tt_\alpha$ is above $\bar{U}$,
and $\mathrm{root}^\Tt_\alpha=-1$ if above $\bar{M}$.
Let $\alpha<\lh(\Tt)$. If $\mathrm{root}^\Tt_\alpha=0$
then the copy map 
\[ \pi_\alpha:M^\Tt_\alpha\to M^\Uu_\alpha \]
is produced routinely. Suppose $\mathrm{root}^\Tt_\alpha=-1$.
If $(-1,\alpha]_\Tt$ does not drop in model and
\[ \crit(i^\Tt_{\beta\alpha})<i^\Tt_{0\beta}(\bar{\gamma})\text{ for all }\beta\in(-1,\alpha)_\Tt \]
(note this does not include $\beta=-1$, so we are allowing $\bar{\gamma}=\crit(i^\Tt_{-1,\alpha})$), then $[0,\alpha]_\Uu$ does not drop in model or degree and
\[ \pi_\alpha:M^\Tt_\alpha\to Q_\alpha=i^{\Uu}_{0\alpha}(M')\pins M^\Uu_\alpha,\]
and $\pi_\alpha$ is an $(m-1)$-lifting embedding which is produced in the obvious manner via the Shift Lemma.
Otherwise, $(0,\alpha]_\Uu$ drops in model, and
$\pi_\alpha:M^\Tt_\alpha\to M^\Uu_\alpha$,
which is again produced in the obvious manner. We copy extenders using these maps.
There is a wrinkle when
$\pred^\Tt(\alpha+1)=-1$ and $\crit(E^\Tt_\alpha)=\bar{\gamma}$,
so consider this case.
We have then $\crit(E^\Uu_\alpha)=\gamma$.
Because
\[ \bar{\gamma}^{+\bar{U}}=\bar{\gamma}^{+\bar{M}}=\bar{\theta}<\lh(E^\Tt_0)\]
and
\[ \gamma^{+U}=\gamma^{+M}=\theta<\lh(E^\Uu_0),\]
we get $M^{*\Tt}_{\alpha+1}=\bar{M}$, and $M^{*\Uu}_{\alpha+1}=U$ (not $M'$),
and $M'\pins U|\theta$. Now if $E^\Uu_\alpha$ is not of superstrong type
then
\[ \lh(E^\Uu_\alpha)<i^\Uu_{0,\alpha+1}(\gamma)<\OR^{Q_{\alpha+1}} \]
and
\[ \pi_{\alpha+1}(\lh(E^\Tt_{\alpha}))=\lh(E^\Uu_{\alpha}) \]
and things are standard. However, if $E^\Uu_\alpha$ is of superstrong type,
then
\[ Q_{\alpha+1}=i^\Uu_{0,\alpha+1}(M')\pins M^\Uu_{\alpha+1}||\lh(E^\Uu_\alpha),\]
so when we lift $E^\Tt_{\alpha+1}$, we get
$\lh(E^\Uu_{\alpha+1})<\lh(E^\Uu_\alpha)$.
However,
\[ \pi_{\alpha+1}(\lambda(E^\Tt_\alpha))=\lambda(E^\Uu_\alpha).\]
Now we claim that $E^\Tt_\alpha$ is also superstrong,
and therefore $\nu(E^\Tt_\alpha)=\lambda(E^\Tt_\alpha)$
and $\nu(E^\Uu_\alpha)=\lambda(E^\Uu_\alpha)$, and then it follows that
\[ \nu(E^\Uu_\alpha)\leq\nu(E^\Uu_{\alpha+1}),\]
as required for the monotone $\nu$-condition.

So suppose $E^\Tt_\alpha$ is not superstrong.
So $\nu(E^\Tt_\alpha)<\lambda(E^\Tt_\alpha)$, so
\[ \pi_{\alpha+1}(\nu(E^\Tt_\alpha))=\psi_{\pi_\alpha}(\nu(E^\Tt_\alpha))<\lambda(E^\Uu_\alpha)=\nu(E^\Uu_\alpha),\]
which implies that $E^\Tt_\alpha=F(M^\Tt_\alpha)$ and
$\pi_\alpha$ is $\nu$-low. In particular, $\pi_\alpha$ is not $\rSigma_1$-elementary,
so is not a near $0$-embedding. Let $j=\mathrm{root}^\Tt_\alpha\in\{-1,0\}$.
By the proof that the copying construction propagates near embeddings
(see \cite{fs_tame}), $(j,\alpha]_\Tt$ does not drop in model,
and so $\bar{M},\bar{U}$ are active. But because $\bar{U}=\Ult_{m-1}(\bar{M},\bar{E})$
and
\[ \crit(\bar{E})\leq\bar{\gamma}<\bar{\theta}\leq\lambda(\bar{E}),\]
we have $\bar{\gamma}\neq\crit(F^{\bar{U}})$,
and then similarly, as $\bar{\theta}\leq\lambda(E^\Tt_0)$,
it easily follows that $j=-1$. But then $\crit(i^\Tt_{j\alpha})\leq\bar{\gamma}$
and $\bar{\theta}\leq\lambda(E^\Tt_0)$, so $\bar{\gamma}\neq\crit(F(M^\Tt_\alpha))$, contradiction.

So $\nu(E^\Tt_\alpha)\leq\nu(E^\Tt_{\alpha+1})$, as desired.
This is the only situation in which the monotone length condition
can fail. We leave the remaining details of the lifting process to the reader.

\end{case}

\begin{case}
 $\pi``\bar{\theta}$ is unbounded in $\theta$.
 
In this case we do not see how to produce a single map lifting $\bar{M}$,
and instead produce a sequence of maps. Note that $\theta$ is a limit cardinal of $M$
(by the case hypothesis we have an $\bfrSigma_m^M$-singularization
of $\theta$, and if $\theta=\gamma^{+M}$ this routinely
implies that $\rho_m^M<\theta$, a contradiction),
and so $\bar{\theta}$ is a limit cardinal of $\bar{M}$.
For each $\bar{M}$-cardinal $\gamma<\bar{\theta}$,
let $(M'_\gamma,\sigma_\gamma)$ be such that
$M'_\gamma\pins M|\theta$ and
$\sigma_\gamma:\bar{M}\to M'_\gamma$
is a near $(m-1)$-embedding with
\[ \sigma_\gamma\rest\gamma^{+\bar{M}}=\pi\rest\gamma^{+\bar{M}}=\psi\rest\gamma^{+\bar{M}}\]
and
$\rho_m^{M'_\gamma}=\sigma_\gamma(\gamma)^{+M}$;
we get such pairs by taking appropriate hulls much as in the previous case.

Now for each $\gamma$ we have $M'_\gamma\pins U$.
So we can use $(\left<\sigma_\gamma\right>_{\gamma<\bar{\theta}},\psi)$
to lift trees on $\ph$ to $m$-maximal trees on $U$.
This is much as in the previous
case, but this time when $\crit(E^\Tt_\alpha)=\gamma<\bar{\theta}$,
then we define $Q_{\alpha+1}=i^\Uu_{0,\alpha+1}(M'_\gamma)$
and define $\pi_{\alpha+1}$ via the Shift Lemma from $\sigma_\gamma$ and $\pi_\alpha$.
We get the monotone length condition here, because
$\sigma_\gamma(\gamma)^{+}<\OR^{M'_\gamma}$.
The details are left to the reader.\qedhere
\end{case}
\end{proof}

Using the claim, we can now complete the proof. We get a successful comparison $(\Tt,\Uu)$
of $(\bar{M},\ph)$, with $\Tt$ being $(m-1)$-maximal. Note that all extenders used in the comparison 
have length $>\bar{\theta}$.
Standard fine structural arguments show that $b^\Uu$ is above $\bar{U}$
and  $b^\Tt,b^\Uu$ are non-model-dropping,
$M^\Tt_\infty=Q=M^\Uu_\infty$
and $\deg^\Tt_\infty=m-1=\deg^\Uu_\infty$.
So $\bar{\theta}\leq\crit(i^\Uu)$, so
$\bar{U}|\bar{\theta}^{+\bar{U}}=Q|\bar{\theta}^{+Q}$,
and since $\lh(E^\Tt_0)>\bar{\theta}$, therefore
\begin{equation}\label{eqn:Mbar_Ubar_strong_agmt} \bar{U}|\bar{\theta}^{+\bar{U}}=\bar{M}||\bar{\theta}^{+\bar{U}}.\end{equation}
But if $\bar{\theta}<\bar{\tau}$ then because $\her_{\bar{\tau}}^{\bar{M}}\sub\bar{U}$,
it follows that
$\bar{U}|\bar{\theta}^{+\bar{U}}=\bar{M}|\bar{\theta}^{+\bar{M}}$,
which contradicts the choice of $\bar{\theta}$. So $\bar{\theta}=\bar{\tau}$,
which with line (\ref{eqn:Mbar_Ubar_strong_agmt}) gives the statement of conclusion (i)
of the theorem but with $\bar{M}$ instead of $N$. However, this statement
is preserved by $\pi,\pi_{MR}$, so part (i) for $N$ follows.

Assuming also that $\ell=0$, so $\bar{E}$ is generated by $\bar{\tau}$,
then standard arguments show that $\bar{E}$ is just the $(\bar{\kappa},\bar{\tau})$-extender
derived from $i^\Tt$, and therefore that in fact $\bar{E}\in\es^{\bar{M}}$.
But this reflects back to $N$, giving part (ii).
\end{proof}

\begin{proof}[Proof of Theorems \ref{thm:strong_extender_in_sequence} and \ref{thm:proj_to_strength}]
Theorem \ref{tm:ext_tau_a_card} directly implies \ref{thm:strong_extender_in_sequence}.
For \ref{thm:proj_to_strength} note that we may replace the given extender
with a sub-extender derived from a set of form $\tau\cup t$, where $t$ is a finite set of ordinals, and then appeal to \ref{tm:ext_tau_a_card}.
\end{proof}

From \ref{thm:strong_extender_in_sequence}
we immediately get:

\begin{cor}\label{cor:seqdef} Let $N$ be a $(0,\om_1+1)$-iterable premouse and 
$\mu,\delta,\kappa\in N$. Then:
\begin{enumerate}[label=--]
 \item If $N\sats$``$\mu$ is a normal measure'' then $\mu\in\es^N$.
 \item If $N\sats$``$\delta$ is Woodin'' then $N\sats$``$\delta$ is Woodin via extenders in 
$\es^N$''.
 \item If $N\sats\PS+$``$\kappa$ is strong'' then $N\sats$``$\kappa$ is strong via extenders 
in $\es^N$''.
\end{enumerate}
\end{cor}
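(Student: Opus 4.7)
The plan is to derive each of the three bullets as a direct application of Theorem \ref{thm:strong_extender_in_sequence}, in each case to an appropriately chosen sub-extender of a witness provided by the hypothesis. The point is that the only nontrivial hypothesis in Theorem \ref{thm:strong_extender_in_sequence} is that $\nu_E$ be an $N$-cardinal with $\her_{\nu_E}^N\sub\Ult(N,E)$, and this can be arranged by truncating at any sufficiently large $N$-cardinal where we have the desired amount of strength.

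For the normal measure bullet, this is really just the remark stated immediately after Theorem \ref{thm:strong_extender_in_sequence}: if $N\sats$``$\mu$ is a normal measure'' with $\crit(\mu)=\kappa$, then $\nu_\mu=(\kappa^+)^N$ is automatically a cardinal of $N$, and $\her_{(\kappa^+)^N}^N=V_{\kappa+1}^N\sub\Ult(N,\mu)$ holds for any normal measure; countable completeness in $N$ is part of the meaning of ``normal measure''. So Theorem \ref{thm:strong_extender_in_sequence} applies and $\mu\in\es^N$.

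For the strong and Woodin bullets, I would fix a witness $E\in N$ and truncate. Concretely, for $\kappa$ strong in $N$ (with $N\sats\PS$) and $\lambda\in\OR^N$, Woodinness or strongness provides $E\in N$ with $\crit(E)=\kappa$ and $V_\lambda^N\sub\Ult(N,E)$ (plus, for Woodinness applied to $A\sub V_\delta^N$, the coherence $i_E(A)\cap V_\lambda=A\cap V_\lambda$). Pick an $N$-cardinal $\nu$ with $\lambda\le\nu<i_E(\kappa)$, and let $F$ be the sub-extender of $E$ on generators $\nu$, i.e.\ $F_a=E_a$ for $a\in[\nu]^{<\om}$. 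Then in $N$, $F$ is short, total, and inherits countable completeness from $E$; also $\nu_F=\nu$ is an $N$-cardinal, and since $\Ult(N,F)$ and $\Ult(N,E)$ agree on $V_\nu$, we have $\her_\nu^N\sub V_\nu^N\sub\Ult(N,F)$. Theorem \ref{thm:strong_extender_in_sequence} then gives $F\in\es^N$, and $F$ clearly witnesses the relevant instance of $\lambda$-strength (respectively, $A$-$\lambda$-strength for Woodinness). Letting $\lambda$ and $A$ vary supplies witnesses on $\es^N$ for all instances of strongness of $\kappa$ and Woodinness of $\delta$.

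The only point requiring some care is verifying in $N$ that the truncated extender $F$ is countably complete. This is inherited from $E$, so it suffices that the extenders $E$ witnessing strongness and Woodinness in $N$ are themselves countably complete in $N$; this is standard, since these witnesses can always be chosen (as is customary in the premouse setting) as $(\kappa,\lambda)$-extenders derived from embeddings with wellfounded target definable over $N$. I expect no other obstacle; modulo this verification, the argument is a one-line application of Theorem \ref{thm:strong_extender_in_sequence} to $F$.
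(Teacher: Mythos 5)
Your plan is exactly the paper's: the paper gives no proof and simply states that the corollary follows ``immediately'' from Theorem \ref{thm:strong_extender_in_sequence}, and applying that theorem to appropriately truncated witness extenders is what is meant. The normal measure bullet is verbatim the remark after Theorem \ref{thm:strong_extender_in_sequence}. Two small points of precision, though.

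First, a harmless slip: $\her_{(\kappa^+)^N}^N$ is not $V_{\kappa+1}^N$; rather $\her_{(\kappa^+)^N}^N\subsetneq V_{\kappa+1}^N$ in general. What you want (and what is true) is just $\her_{(\kappa^+)^N}^N\sub\Ult(N,\mu)$, which holds because $\pow(\kappa)^N\sub\Ult(N,\mu)$ and every set of hereditary size $\leq\kappa$ in $N$ is coded by a subset of $\kappa$.

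Second, and this is the one actual gap: you assert $\nu_F=\nu$ for $F=E\rest\nu$ without justification. Restricting an extender to an ordinal $\nu$ does not in general make $\nu$ its natural length --- the sup of the generators of $E$ below $\nu$ may well be $<\nu$, and that sup need not be a cardinal, in which case Theorem \ref{thm:strong_extender_in_sequence} would not apply. The fix is short but should be said: choose $\nu$ to be an $N$-cardinal with $\nu>(2^\kappa)^N$ at which $\kappa$ is $\nu$-strong (such $\nu$ are unbounded in $\OR^N$ for the strong case, and unbounded in $\delta$ for the Woodin case), take a witness $E$ with $V_\nu^N\sub\Ult(N,E)$ and $\nu<i_E(\kappa)$, and set $F=E\rest\nu$. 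If the generators of $F$ were bounded by some $\xi<\nu$, then every ordinal $<\nu$ would have the form $i_E(f)(a)$ with $f\in N$, $f:[\kappa]^{<\om}\to\kappa$, and $a\in[\xi]^{<\om}$; but there are only $\max((2^\kappa)^N,|\xi|^N)<\nu$ such values, a contradiction. So $\nu_F=\nu$, and then $\her_\nu^N\sub V_\nu^N\sub\Ult(N,F)$ as you say. (Note you also need $V_\nu^N\sub\Ult(N,E)$, not merely $V_\lambda^N$ for some $\lambda\leq\nu$, for that last inclusion --- so choose the witness $E$ after fixing $\nu$, not before.) With these adjustments your argument is complete and coincides with the intended one.
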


We next prove a finer variant of Theorem \ref{tm:ext_tau_a_card}.
However, we do not actually need the variant in later sections of the paper.

Recall the Dodd projectum and parameter $\tau_E$ and $t_E$ of a short extender $E$;
see \cite{combin} or \cite[\S2]{extmax} for background.
The most important fact we use in this section regarding this notion
is the following:

\begin{fact}[Steel]\label{fact:Dodd-sound} Let $M$ be a $1$-sound, $(0,\om_1+1)$-iterable 
 premouse which is below superstrong.
 Then every $E\in\es_+^M$ is Dodd-sound.
\end{fact}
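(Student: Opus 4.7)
The plan is to run a comparison argument in the same style as the proof of Theorem~\ref{tm:ext_tau_a_card} just completed. Suppose for contradiction that some $E \in \es_+^M$ fails Dodd-soundness, and pick such an $E$ of minimal length; let $N = M||\lh(E)$, so $F^N = E$, and set $\kappa = \crit(E)$, $\lambda = \lambda(E)$, $\tau = \tau_E$, $t = t_E$. A failure of Dodd-soundness comes in two forms: either a Dodd-solidity witness $w_i$ for some $t_i \in t$ is missing from $N$, or the relevant Dodd-universality property of the $\tau$-part of $E$ fails.

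First I would extract the witnessing hull. In the non-solidity case, with $i$ least such that $w_i \notin N$, form $H = \Hull_1^N(t_i \cup \{t \rest i\})$ together with the appropriate fine structural parameters, let $\bar{N}$ be its transitive collapse with uncollapse $\pi: \bar{N} \to N$, and put $\bar{E} = F^{\bar{N}}$. Then $\bar{N}$ is a premouse of length $< \lh(E)$, so by minimality $\bar{E}$ is itself Dodd-sound, and the Dodd parameter $\bar{t}$ of $\bar{E}$ lies in $\rg(\pi)$ with $\pi(\bar{t}) = t$. The non-universality case yields an analogous $\bar{N}$ whose $\bar{E}$ is Dodd-sound of smaller length.

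Now comes the comparison. Following the template of Theorem~\ref{tm:ext_tau_a_card}, I would form the phalanx $\ph$ rooted at $\bar{N}$ together with $\bar{U} = \Ult(N, \bar{E})$, with exchange ordinal at the natural agreement cutpoint below $\lambda(\bar{E})$. Iterability of $\ph$ is obtained by lifting essentially $m$-maximal trees to essentially $m$-maximal trees on $\Ult(M, E)$ using $\pi$ and its Shift Lemma companion $\psi_\pi$; the lift target is iterable because $M$ is $(0,\om_1+1)$-iterable and $E \in \es_+^M$. Comparison of $\bar{N}$ against $\ph$ then proceeds by standard fine structural analysis, giving non-dropping main branches with common last model and strong agreement between $\bar{U}$ and $\bar{N}$ above the cutpoint. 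Translating this back through $\pi$ exhibits the supposedly missing solidity witness, or refutes the universality defect, inside $N$ --- the desired contradiction.

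The main obstacle is the copying construction in the presence of type 3 extenders; specifically, the monotone $\nu$-condition needed for the lifted tree to be essentially $m$-maximal can fail if a copy map becomes $\nu$-low, exactly the wrinkle handled near the end of the Shift Lemma lift inside the proof of Theorem~\ref{tm:ext_tau_a_card}. It is precisely here that the below-superstrong hypothesis enters: it rules out $\nu$-low copy steps by forbidding the appearance of superstrong extenders as $F^{M^\Uu_\alpha}$, so that whenever the dichotomy between $\nu$-low and $\nu$-preserving must be settled, only the latter can occur, and the comparison proceeds cleanly.
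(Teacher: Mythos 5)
First, a scope note: the paper does not prove this statement. It is labelled as a \emph{Fact} and attributed to Steel, with the paper adding only commentary (including the remark that Zeman proved an analogue for Jensen indexing without the superstrong restriction, and the footnote that extending Steel's argument past the superstrong barrier ``requires significant extra work''). So there is no proof in the paper to compare against; your proposal must stand on its own.

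The general shape you describe --- minimal-length counterexample, extract a hull $\bar N$ of $N = M||\lh(E)$ that misses the offending Dodd-solidity witness or universality, form a phalanx with an ultrapower, compare, and push the contradiction back through the uncollapse --- is in the correct spirit, and it is the same flavour of argument used for solidity of the standard parameter and for the extender-maximality theorems of \S\ref{sec:extender_maximality}. But there is a genuine gap in your account of where the ``below superstrong'' hypothesis is used. You claim it enters precisely to rule out $\nu$-low copy maps in the lifting step, so that the $\nu$-low/$\nu$-preserving dichotomy resolves cleanly. That cannot be the heart of the matter: the proof of Theorem~\ref{tm:ext_tau_a_card} in this very paper runs \emph{without} any below-superstrong assumption and confronts the $\nu$-low wrinkle directly, showing that when $E^\Uu_\alpha$ is of superstrong type then $E^\Tt_\alpha$ must be too, hence both have $\nu = \lambda$ and the monotone $\nu$-condition survives; essentially $m$-maximal trees were introduced exactly to absorb that wrinkle. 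Moreover, the paper's footnote explicitly contrasts the two situations --- solidity-type facts ``adapt to the superstrong case with very little modification,'' whereas Dodd-soundness ``requires significant extra work'' --- which would not be so if the only obstruction were a $\nu$-low copy step, since that step arises identically in both proofs. The below-superstrong hypothesis in Steel's Dodd-soundness argument is doing something deeper, almost certainly in the fine-structural analysis of the extenders used along the comparison branches (the decomposition into Dodd-cores, the bookkeeping of Dodd-projecta and parameters), not merely in the copying construction. Your sketch also leaves the crux unspecified: the phalanx $(\bar N,\Ult(N,\bar E))$ is plausible as a starting point, but you do not say what the comparison analysis actually produces, and ``translating back through $\pi$'' elides exactly the fine-structural work in which the contradiction is manufactured.
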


\begin{rem}\label{rem:superstrong_Dodd-soundness}
 Note
that \ref{fact:Dodd-sound} and all results in this paper are  for mice with Mitchell-Steel indexing.
A theorem analogous to \ref{fact:Dodd-sound} has been proven by Zeman for mice with Jensen indexing,
without the superstrong restriction (see \cite{zeman_dodd_params}). 
Further,  \ref{fact:Dodd-sound}
 has also been generalized for mice in Mitchell-Steel indexing, to allow extenders of superstrong type  in $\es_+^M$; see \cite{fsfni}.\footnote{For the generalization of the other standard
fine structural facts, such as the solidity of the standard parameter,
the proof ``below superstrong'' adapts to the superstrong case
with very little modification. However, for the proof of Dodd-soundness,
the proof requires significant extra work.}
  With this, Theorem \ref{tm:fine_ext_in_seq}
 should also generalize accordingly in a straightforward manner. However,
 as of at the time of publication of this paper,
 \cite{fsfni} is only at the preprint stage,
 so we have only formally stated \ref{tm:fine_ext_in_seq}
 ``below superstrong''.
 
Note that in \ref{tm:fine_ext_in_seq}, we allow $E$ itself
to ostensibly be of ``superstrong type'', but then it follows
that $t=\emptyset$ and $E\in\es_+^M$, so in fact,
$E$ is not of such type (because $M$ is assumed to be below superstrong).
\end{rem}

\begin{dfn}\label{dfn:amenable_def} 
Let $M$ be a premouse and $E$ a standard short $M$-extender,
with support $\tau+\ell$ where $\tau$ is a limit ordinal and $\ell<\om$. Suppose $E$ is weakly amenable (to $M$). 
Let $U=\Ult_0(M,E)$ (we don't assume $U$ is wellfounded). Let $\kappa=\crit(E)$.
 We say that $E$ is \emph{amenably $\bfrSigma_{m+1}^M$}
 iff $\tau<\rho_0^M$ and $\tau^{+U}$ is wellfounded and $U|\tau^{+U}\sub M$,
 and the standard coding of $E$ as an amenable subset of $U|\tau^{+U}$
 is $\bfrSigma_{m+1}^M$. Here the coding consists of tuples $(\xi,\alpha_\xi,E_\xi)$ where
 $\xi<\kappa^{+M}$ and $E_\xi$ is the natural coding of the extender fragment
 \[ E\rest((M|\xi)\cross[\tau+\ell]^{<\om}) \]
 as a subset of $M|\tau$,
 and $\alpha_\xi$ is the least $\alpha$ such that $E_\xi\in U|(\alpha_\xi+\om)$.
 (By the usual proof (see \cite[\S2]{fsit}), $E_\xi\in U$ and the $\alpha_\xi$'s are cofinal in $\tau^{+U}$.)
  We say that $E$ is \emph{explicitly Dodd-solid above $\tau$}
 iff, letting $j:M\to U$ be the ultrapower map
 and 
\[ t=\{[\{\tau+i\},\id']^{M,0}_E\bigm|i<\ell\},\]
 we have $E_j\rest(\alpha\cup (t\backslash\{\alpha\}))\in U$ for each $\alpha\in t$.
 \end{dfn}

\begin{tm}\label{tm:fine_ext_in_seq}
Let $m<\om$ and let $M$ be an $(m+1)$-sound, $(m,\om_1+1)$-iterable
premouse which is below superstrong. Let $E$ be a standard short $M^\sq$-extender,
with support $\tau+\ell$ where $\tau$ is a limit ordinal and $\ell<\om$. Suppose $E$ is weakly amenable to $M$,
 $\kappa=\crit(E)<\rho_{m}^M$,
$E$ is amenably $\bfrSigma_{m+1}^M$,
$\tau\leq\rho_{m+1}^M$ and $\tau$ is an $M$-cardinal
with $\her_\tau^M\sub U$, where  $U=\Ult_m(M,E)$.
Then:
\begin{enumerate}[label=\tu{(}\roman*\tu{)}]\item 
$U|\tau^{+U}=M||\tau^{+U}$
and \item  if $E$ is explicitly Dodd-solid above $\tau$ then  $E\in\es_+^M$.
\end{enumerate}
\end{tm}

\begin{proof}

Let $j:M\to U$ be the ultrapower map. Let $t$ be as in  Definition \ref{dfn:amenable_def}.
So $E\equiv E_j\rest\tau\cup t$, and we will identify these two extenders in what follows.

If $U||\tau=M||\tau$ then let $\theta=\tau$,
and otherwise let $\lambda$ be least such 
that $U|\lambda\neq M|\lambda$ and let $\theta=\card^M(\lambda)$.
So $\theta$ is an $M$-cardinal and $\theta\leq\tau$, and by condensation and weak amenability, $\kappa^{+M}=\kappa^{+U}
\leq\theta$.
Note that if $E$ is explicitly Dodd-solid above $\tau$ then $E$ is Dodd-sound.
(For suppose $\kappa^{+M}<\tau$.
As $E\rest(\tau\cup t)$ is amenably
$\bfrSigma_{m+1}^M$ and $\tau\leq\rho_{m+1}^M$,
then $E\rest(\alpha\cup t)\in M$
for each $\alpha<\tau$. But $\her_\tau^M\sub U$,
so $E\rest(\alpha\cup t)\in U$.)
Let $e\in[\core_0(M)]^{<\om}$ be such that:
\begin{enumerate}
\item $\theta,\tau\in e$ (recall that $\tau<\rho_0^M$ by \ref{dfn:amenable_def}).
\item If $\core_0(M)$ has largest cardinal $\Omega$ then $\Omega\in e$.
 \item The amenable coding of $E\rest(\tau\cup t)$
 (described in \ref{dfn:amenable_def})
 is $\rSigma_{m+1}^M(\{e\})$.
\item\label{item:theta<tau,lambda} If $\theta<\tau$ then $\chi\in e$
where $\chi<\tau$ is least such that $U|\chi\neq M|\chi$.
\item If $\tau^{+U}<\tau^{+M}$
then $\tau^{+U}\in e$.
\item\label{item:tau^+_match_lambda} If $\tau^{+U}=\tau^{+M}$
but $U|\tau^{+U}\neq M||\tau^{+M}$ then $\lambda\in e$
where $\lambda\in(\tau,\tau^{+U}]$ is least such that $U|\lambda\neq M|\lambda$ (note that in fact, $\lambda<\tau^{+U}$,
because $\tau^{+U}<\OR^U$, because $\kappa^{+M}\leq\tau<\rho_0^M$).
 \item If $E$ is explicitly  Dodd-solid above $\tau$ then there are $a,f\in e$
 such that $a\in[\tau]^{<\om}$ and $[a\cup t,f]^{M,m}_E$ is the (finite) set of Dodd-solidity witnesses (for $t$).
 \item\label{item:dodd-soundness_witness,F}\footnote{This condition is only relevant at the very end of the proof,
 and its motivation will only become clear there; the reader can ignore it until that point.}
 If $E$ is explicitly Dodd-solid above $\tau$ 
 and $\theta=\tau$ and
 \[ \varsigma=\tau^{+U}<\tau^{+M} \]
 and $U|\varsigma=M||\varsigma$ but $M|\varsigma$ is active with an extender $F$ such that $\kappa<\crit(F)$,
 then there are $a,f\in e$ with $a\in[\tau]^{<\om}$ and
 such that
 \[ [a\cup t,f]^{M,m}_E=E\rest(\crit(F)\cup t).\]
 \end{enumerate}

Let $q$
be such that
 $(\om,q)$ is $(m+1)$-self-solid for $M$
and $e\in\rg(\pi)$
where $\Mbar=\cHull_{m+1}^M(\{\pvec_m^M,q\})$ and $\pi:\Mbar\to M$ is the uncollapse ($q$ exists by  \ref{lem:sound_hull}).

Let $\pi(\qbar)=q$, $\pi(\thetabar)=\theta$, etc.
Also write $\tbar$ for the preimage of $t$ ``in the codes''; but note we did not demand that $t\sub\OR^M$.
 So $\Mbar$ is $(m+1)$-sound with $\rho_{m+1}^\Mbar=\om$ and $\qbar=p_{m+1}^{\Mbar}$.
 Let $\Ebar\rest\taubar\cup\tbar$ be defined over $\Mbar$ from $\ebar$
 as $E\rest\tau\cup t$ is defined over $M$ from $e$.
 Then the usual proof that $\Sigma_1$-substructures of premice are premice\footnote{As
 $\Sigma_1$ includes
 a constant symbol for the largest initial segment of the active extender.}
and some similar considerations show that most of the facts reflect to $\Mbar,\Ebar$, etc,
and in particular:
 \begin{enumerate}[label=\arabic*'.,ref=\arabic*']
  \item  $\Ebar\rest\taubar\cup\tbar$ is a weakly amenable short
  $\Mbar$-extender
 with $\kappabar=\crit(\Ebar)<\rho_m^\Mbar$.
\end{enumerate}
 Let $\Ebar$ be the short $\bar{M}$-extender generated by $\Ebar\rest\taubar\cup\tbar$
 and let
\[ \Ubar=\Ult_m(\Mbar,\Ebar).\]
 \begin{enumerate}[resume*]
 \item If $M$ has largest cardinal $\Omega$ then $\Mbar$ has largest cardinal $\bar{\Omega}$.
 \item $\thetabar,\taubar$ are $\Mbar$-cardinals, $\her^\Mbar_{\taubar}\sub\Ubar$
 and $\Mbar|\thetabar=\Ubar|\thetabar$.
 \item If $\theta<\tau$ then $\Mbar|\thetabar^{+\Mbar}\neq\Ubar|\thetabar^{+\Ubar}$,
 and $\bar{\chi}$ is least such that $\Mbar|\bar{\chi}\neq\Ubar|\bar{\chi}$.
 \item If $\theta=\tau$ then:
 \begin{enumerate}[label=--]\item If
  $\tau^{+U}<\tau^{+M}$ then $\taubar^{+\Ubar}<\taubar^{+\Mbar}$
 and $\pi(\taubar^{+\Ubar})=\tau^{+U}$.
 \item   If $\tau^{+U}=\tau^{+M}$ then $\taubar^{+\Ubar}=\taubar^{+\Mbar}$.
 \end{enumerate}
 \item If $\theta=\tau$ then:
  \begin{enumerate}[label=--]

 \item If $U|\tau^{+U}=M||\tau^{+U}$ then $\Ubar|\taubar^{+\Ubar}=\Mbar||\taubar^{+\Ubar}$.

 \item If  $U|\tau^{+U}\neq M||\tau^{+U}$ then
 $\taubar<\bar{\lambda}<\taubar^{+\Ubar}$ and $\bar{\lambda}$ is least
 such that $\Ubar|\bar{\lambda}\neq\Mbar|\bar{\lambda}$.
 \end{enumerate}
 \item If $E$ is explicitly Dodd-solid above $\tau$ then $\Ebar$ is Dodd-solid with respect to $\tbar$.
 That is, 
 for each $\alpha\in\tbar$, we have
 $\Ebar\rest(\alpha\cup(\tbar\backslash(\alpha+1)))\in\Ubar$.
 \item\label{item:dodd-soundness_witness,Fbar} If $E$ is explicitly Dodd-solid above $\tau$
  and $U,M,\varsigma,F$
 are as in condition \ref{item:dodd-soundness_witness,F},
 then $\bar{\varsigma}=\taubar^{+\Ubar}$ and $\Mbar|\bar{\varsigma}$
 is active with $\Fbar$ and the Dodd-soundness witness
 $\Ebar\rest(\crit(\Fbar)\cup\tbar)$ is in $\Ubar$.
\end{enumerate}
(We do not yet know that $\Ubar$ is wellfounded.)
Let $\jbar:\Mbar\to\Ubar$ be the ultrapower map.
Let $\psi:\Ubar\to U$ be the Shift Lemma map.
Define the phalanx $\ph=((\Mbar,m,\thetabar), (\Ubar,m), \thetabar)$.

\begin{clmtwo}\label{clm:escit} $\Ubar$ is wellfounded and $\ph$ is $(\omega_1+1)$-iterable.\end{clmtwo}
\begin{proof}
The argument is mostly similar to that in the proof of \ref{tm:ext_tau_a_card}.
We will lift $m$-maximal trees $\Tt$ on $\ph$ to $m$-maximal trees on 
$M$.\footnote{\label{ftn:superstrong}Because we have assumed that $M$ is below superstrong, we do indeed get $m$-maximal trees on $M$ here. In the version of the proof which should hold without this restriction, we might only get essentially $m$-maximal trees on $M$.} For this we will find embeddings from $\Mbar$ and $\Ubar$ into segments of $M$ with appropriate agreement.
As before, in one case we only see how to find an infinite sequence of embeddings from $\bar{M}$
into various segments of $M$, and use of all these together as base copy maps.
We will initially find such a system of maps inside $U$, and then deduce that there is also such a system in $M$ via the elementarity of $j$.
We first make some general observations that will lead to finding
the system of embeddings in $U$.

Let $R\pins M$. Note that $M$ satisfies condensation
with respect to premice embedded into $R$;
in particular, $M\sats$``For every $s<\om$ and every premouse $S\in R$
such that $S$ is $(s+1)$-sound and $\pi:S\to R$ is $s$-lifting and
and $\crit(\pi)\geq\rho_{s+1}^S$, either (i) $S\pins R$
or (ii) $\alpha\eqdef\crit(\pi)=\rho_{s+1}^S$ and $R|\alpha$ is active
and $S\pins\Ult_0(R|\alpha,F^{R|\alpha})$''.
Therefore $U$ satisfies the same statement regarding its proper segments.

Let  $M_\kappa=\cHull_{m+1}^M(\kappa\cup\{q\})$.
Let $\sigma_\kappa:\Mbar\to M_\kappa$ and $\pi_\kappa:M_\kappa\to M$ be the natural maps
and $\pi_\kappa(q_\kappa)=q$.
Note that  $M_\kappa$ is sound and $M_\kappa\in M$. So $\rho_{m+1}^{M_\kappa}=\kappa\leq\crit(\pi_\kappa)$.
By condensation, $M_\kappa\pins M$.

Now $\tau$ is a $U$-cardinal with $\kappa<\tau\leq j(\kappa)$.
Working in $U$, let
\[ U'=\cHull_{m+1}^{j(M_\kappa)}(\tau\cup\{r\}) \]
with $r\in[\OR]^{<\om}\cap j(M_\kappa)$ chosen such that
$U\sats$``$(\tau,r)$ is $(m+1)$-self-solid for $j(M_\kappa)$
and letting $\varrho_\tau:U'\to j(M_\kappa)$ be the uncollapse,
then $j(q_\kappa),t\in\rg(\varrho_\tau)$''.
Such an $r$ exists by the elementarity of $j$
and by \ref{lem:sound_hull}.
(Note that $U\sats$``$j(M_\kappa)$ is wellfounded'';
the transitive collapse $U'$ is computed inside $U$, where it is well-defined.)
Note that if $\tau=j(\kappa)$ then $t=\emptyset$ and $U'=j(M_\kappa)$ and $r=j(q_\kappa)$.
And if $\tau<j(\kappa)$ then $\rho_{m+1}^{U'}=\tau$, 
so $U'\pins j(M_\kappa)$ by condensation in $U$.
In fact, $U'\pins U|\tau^{+U}$, and we assumed that $U|\tau^{+U}$ is wellfounded,
so $U'$ is wellfounded.

Let $\psi:\Ubar\to j(M_\kappa)$ be the Shift Lemma map
induced by $\sigma_\kappa$ and $\pi$. That is, 
given\footnote{Here for a premouse
$R$, $f^R_{\tau,r}$ is the partial function
$f:\core_0(R)^2\to\core_0(R)$ given by
$f(a',t')=\tau^R(r,a',t')$.}
\[ x=[a\cup \bar{t},f^{\Mbar}_{\tau,z}]^{\bar{M},m}_{\bar{E}}\in\core_0(\Ubar)\]
where $\tau$ is an $\rSigma_m$ term, $z\in\core_0(\Mbar)$
and $a\in[\bar{\tau}]^{<\om}$,
then\footnote{The reader may wonder why the superscript ``$U$''
is placed on the last item in the equation.
This is just because we do not know that $j(M_\kappa)$ is wellfounded, and if it is illfounded, then the meaning of ``$\tau^{j(M_\kappa)}$'' might require further explanation. Since $U\sats$``$j(M_\kappa)$ is a sound premouse'', there is no problem interperting $\tau^{j(M_\kappa)}$ in $U$.}
\begin{eqnarray*} \psi(x)&=&[\pi(a)\cup t,f^{M_\kappa}_{\tau,\sigma_\kappa(z)}]^{M,m}_E\\&=&j(f^{M_\kappa}_{\tau,\sigma_\kappa(z)})(\pi(a)\cup t)\\
&=&(\tau^{j(M_\kappa)}(j(\sigma_\kappa(z)),\pi(a)\cup t))^U.\end{eqnarray*}

Now $\rg(\psi)\sub\rg(\varrho_\tau)$,
for given $x$, etc as above, we have $\pi(a)\sub\tau\sub\rg(\varrho_\tau)$, $t\in\rg(\varrho_\tau)$, and $j(\sigma_\kappa(z))\in\rg(\varrho_\tau)$ since \[ j(\sigma_\kappa(z))\in\rg(j\com\sigma_\kappa)=(\Hull_{m+1}^{j(M_\kappa)}(\{j(q_\kappa)\}))^U\]
and $j(q_\kappa)\in\rg(\varrho_\tau)$. So
$\psi(x)\in\rg(\varrho_\tau)$.

So we can define $\psi':\Ubar\to U'$ by $\psi'=\varrho^{-1}_\tau\com\psi$.
Then $\psi'$ is $m$-lifting,
because if $\varphi$ is $\rSigma_{m+1}$
and $\Ubar\sats\varphi(x)$ then easily
\[ U\sats\text{``}j(M_\kappa)\sats\varphi(\psi(x))\text{''},\]
so $U\sats\text{``}U'\sats\varphi(\psi'(x))\text{''}$,
so $U'\sats\varphi(\psi'(x))$. Also $\psi'\rest\taubar=\pi\rest\taubar$.
And $\psi'$ is c-preserving; if $m=0$ and $M$ has  largest cardinal $\Omega$,
this follows easily from commutativity and the fact that
we put $\Omega\in\rg(\pi)$,
and if $m=0$ and $M$ has no largest cardinal then it is because
then for any $M$-cardinal $\xi$, we have $M|\xi\elem_1 M$ by condensation,
and hence, $\kappa<\max(q)$ (as $\kappa\in\rg(\pi)$),
and so $\Mbar,M_\kappa$ have largest cardinals $\Psi_\om,\Psi_\kappa$
respectively, with $\pi(\Psi_\om)=\pi_\kappa(\Psi_\kappa)=\card^M(\max(q))$.

For $\eta<\theta$,
 let
\[ M_\eta=\cHull_{m+1}^{M}(\eta\cup\{q\}) \]
and $\pi_\eta:M_\eta\to M$ be the uncollapse
and $\sigma_\eta:\Mbar\to M_\eta$ the natural map,
so $\pi_\eta\com\sigma_\eta=\pi$.
Since $\eta<\theta\leq\tau\leq\rho_{m+1}^M$,
we have $M_\eta\in M$. Note that if $\eta$ is an $M$-cardinal
then  $M_\eta$ is $(m+1)$-sound with
 $\eta=\rho_{m+1}^{M_\eta}$
 and $p_{m+1}^{M_\eta}=\sigma_\eta(\qbar)\backslash\eta$,
so $M_\eta\pins M|\theta$.

Now as before, we consider two cases.

\begin{casetwo} $\pi``\thetabar$ is bounded in $\theta$.

Let $\eta=\sup\pi``\thetabar$.
We have $M_\eta$, etc, as above.
Note that either:
\begin{enumerate}[label=--]
\item $\eta$ is a limit cardinal of $M$ (hence the comments above apply), or
 \item $M||\eta$ has largest cardinal $\xi$
 where $\xi$ is an $M$-cardinal and $\xi\in\rg(\pi)$,
 and
 \[ \eta\sub\Hull_{m+1}^M(\xi\cup\{q\})=\Hull_{m+1}^M(\eta\cup\{q\}), \]
 because
 $\rg(\pi)=\Hull_{m+1}^M(\{q\})$ is cofinal in $\eta$; therefore,
 $\rho_{m+1}^{M_\eta}=\xi$ and $p_{m+1}^{M_\eta}=\sigma_\eta(\qbar)\backslash\xi$.
\end{enumerate}
It follows that $M_\eta$ is sound, and  $\crit(\pi_\eta)\geq\eta$.
Since $\eta<\theta\leq\rho_{m+1}^M$, condensation (\cite[Theorem 5.2]{premouse_inheriting})
gives  $M_\eta\pins M|\theta$. Note that
$\sigma_\eta\rest\thetabar=\pi\rest\thetabar$ and
$\sigma_\eta\in M|\theta$.
Since $M|\theta=U|\theta$, therefore $M_\eta\pins U|\theta$ and $\sigma_\eta\in U|\theta$.
Note that $M_\eta\pins\core_0(U')$ as $\eta<\tau$.

Now $\sigma_\eta\rest\thetabar=\pi\rest\thetabar=\psi'\rest\thetabar$
and $\sigma_\eta,\Ubar,U'\in U$, with $\Ubar\in\HC^U$, and moreover,
$U|(\OR^{U'})^{+U}$ is wellfounded. So by absoluteness,
in $U$ there is some c-preserving $m$-lifting embedding
$\widetilde{\psi}:\Ubar\to U'$
with $\widetilde{\psi}\rest\thetabar=\sigma_\eta\rest\thetabar$.

So $U\sats\varphi^+(\Mbar,\Ubar,\thetabar)$,
where $\varphi^+(\Mbar,\Ubar,\thetabar)$ asserts ``There are proper segments $M^*$ and $U^*$ of me,
with $M^*\pins \core_0(U^*)$,
and there are c-preserving $m$-lifting embeddings
\[ \pi^*:\Mbar\to M^*\text{
and }\psi^*:\Ubar\to U^*\]
such that $\pi^*\rest\thetabar=\psi^*\rest\thetabar$
and if $\bar{M}|\bar{\theta}$ has largest cardinal $\bar{\xi}$ then $\rho_{m+1}^{M^*}=\pi^*(\bar{\xi})$''.

So by elementarity, $M\sats\varphi^+(\Mbar,\Ubar,\thetabar)$.
Let $M^*,U^*,\pi^*,\psi^*$ witness this in $M$.
These embeddings are enough to copy $m$-maximal trees on $\ph$
to  $m$-maximal trees on $M$.
Let us point out one detail of the copying process. Suppose $\bar{M}|\thetabar$ has largest cardinal $\xibar$ and let $\xi^*=\pi^*(\xibar)$. Then $\rho_{m+1}^{M^*}=\xi^*$ and
\begin{equation}\label{eqn:where_M^*_is}M|\xi^*\pins M^*\pins U^*|(\xi^*)^{+U^*}=U^*|\psi^*(\bar{\theta}).\end{equation}
When iterating $\ph$, extenders $G$ with $\crit(G)=\xibar$
apply to $\Mbar$.  Let $G^*$ be the lift of $G$.
Then $\crit(G^*)=\xi^*$ and $G^*$ is $U^*$-total. From line (\ref{eqn:where_M^*_is}), it follows that
we can define a copy map
\[ \Ult_m(\Mbar,G)\to i^{U^*}_{G^*}(M^*) \]
in the usual manner. Otherwise the copying is routine.\footnote{If one generalizes the proof by dropping the requirement that $M$ be below superstrong type
(assuming the corresponding generalization of Fact \ref{fact:Dodd-sound} as mentioned in Remark \ref{rem:superstrong_Dodd-soundness}), then the lifted tree might fail the the monotone length condition, but 
it will be essentially $m$-maximal;
this is much as in the proof of  \ref{tm:ext_tau_a_card}.}
\end{casetwo}
\begin{casetwo} $\pi``\thetabar$ is unbounded in $\theta$.

Then $\theta$ is a limit cardinal of $M$,
because $\theta$ is an $M$-cardinal ${\leq\rho_{m+1}^M}$
and there is an
$\bfrSigma_{m+1}^M$-definable cofinal partial map
 $\om\to\sup\pi``\thetabar$.
For each $M$-cardinal $\mu<\theta$ we have $M_\mu,\sigma_\mu\in M|\theta=U|\theta$.
We have $M_\mu,\sigma_\mu,U'\in U|\tau^{+U}$.

Let $C$ be the set of $\Mbar$-cardinals ${<\thetabar}$.
Working in $U$, let $T$ be the tree searching for
$\widetilde{\psi},\widetilde{U}$ and 
a sequence $\left<\widetilde{M}_{\mubar},\widetilde{\sigma}_{\mubar}\right>_{\mubar\in C}$
such that:
\begin{enumerate}[label=--]
 \item $\widetilde{U}\pins U|j(\kappa)^{+U}$
 \item $\widetilde{\psi}:\Ubar\to\widetilde{U}$
 is c-preserving $m$-lifting,
\item for each $\mubar\in C$:
\begin{enumerate}[label=--]\item $\widetilde{U}|\widetilde{\psi}(\mubar)\ins\widetilde{M}_\mubar\pins\widetilde{U}$.
\item $\widetilde{\sigma}_\mubar:\Mbar\to\widetilde{M}_\mubar$ 
is c-preserving $m$-lifting,
 \item $\widetilde{\sigma}_\mubar\rest(\mubar+1)\sub\widetilde{\psi}$.
 \end{enumerate}
\end{enumerate}
We have that $U\sats$``$T$ is illfounded'',
because
$\psi',U',\left<M_\mu,\sigma_\mu\right>_\mu$ exist
and $U|\tau^{+U}$
is wellfounded and models $\ZFC^-$.

Now $T=j(T^M)$ for some $T^M\in M$,
so $M\sats$``$T^M$ is illfounded''.
But then letting $\widetilde{U},\widetilde{\psi},\left<\widetilde{M}_\mubar,\widetilde{\sigma}_\mubar\right>_{\mubar\in C}$
witness this, these objects allow us to lift $m$-maximal trees on $\ph$
to $m$-maximal trees on $M$. (Here when we use an extender $G$ with $\crit(G)=\gammabar<\thetabar$,
we apply it to $\Mbar$, and our next lifting map is of the form
\[ \varphi:\Ult_m(\Mbar,G)\to i(\widetilde{M}_\mubar) \]
where $\mubar=\gammabar^{+\bar{M}}$ and
where $i$ is the upper ultrapower map,
and $\varphi$ is defined as usual using $\widetilde{\sigma}_{\mubar}$.)
\end{casetwo}

This completes both cases, and hence, the proof that $\ph$ is iterable.
\renewcommand{\qedsymbol}{$\Box$(Claim \ref{clm:escit})}\qedhere
\end{proof}

We have $\Mbar|\thetabar=\Ubar|\thetabar$. So comparison of $(\ph,\Mbar)$ uses only 
extenders indexed above $\thetabar$. So by the claim, there is a successful such 
comparison $(\Uu,\Tt)$.

\begin{clmtwo}
We have:
\begin{enumerate}
 \item\label{item:no_drops_etc}  $M^\Uu_\infty=M^\Tt_\infty$, $b^\Uu,b^\Tt$ do not 
drop in model or degree, $b^\Uu$ is above $\Ubar$ and  $i^\Uu\com\jbar=i^\Tt$.
 \item\label{item:agreement} $\thetabar=\taubar$, so $\theta=\tau$.
 \item\label{item:further_agmt} $\Ubar|\taubar^{+\Ubar}=\Mbar||\taubar^{+\Ubar}$,
 so $U|\tau^{+U}=M||\tau^{+M}$.
 \item\label{item:Ebar_on} If $E$ is explicitly Dodd-solid above $\tau$ then $\Ebar\in\es_+^{\Mbar}$, so $E\in\es_+^M$.
\end{enumerate}
\end{clmtwo}
\begin{proof} Because $\Mbar$ is $(m+1)$-sound and $\rho_{m+1}^\Mbar=\om$, standard 
arguments give part \ref{item:no_drops_etc}.

Part \ref{item:agreement}: Suppose that $\thetabar<\taubar$. Then since 
$\her_\taubar^\Mbar\sub\Ubar$, we have $\thetabar^{+\Ubar}=\thetabar^{+\Mbar}$.
But then since  $b^\Uu$ is above $\Ubar$ and does not drop,
\[ 
\Ubar||\thetabar^{+\Ubar}=M^\Uu_\infty||\thetabar^{+M^\Uu_\infty}=
M^\Tt_\infty||\thetabar^{+M^\Tt_\infty}=\Mbar||\thetabar^{+\Ubar}=
\Mbar||\thetabar^{+\Mbar}, \]
contradicting the choice of $\theta$ (and hence $\thetabar$).

Part \ref{item:further_agmt}: Much as in  part \ref{item:agreement},
but now with $\taubar=\thetabar$, so $\crit(i^\Uu)\geq\taubar$. The conclusion that $U|\tau^{+U}=M||\tau^{+U}$
follows from the reflection between $\Mbar$ and $M$ discussed earlier.

Part \ref{item:Ebar_on}: If $\Ebar\in\es^\Mbar$,
note that $\Ebar\in\es(\core_0(\Mbar))$, since $\taubar<\rho_0^\Mbar$;
it easily follows then that $E=\pi(\Ebar)$, just by the elementarity of $\pi$.
Similarly if $\Ebar=F^\Mbar$ then $E=F^M$ by elementarity.
So we just need to see that $\Ebar\in\es_+^\Mbar$, assuming that $E$ is explicitly Dodd-solid above $\tau$.

If $t=\emptyset$
then this follows from the ISC
as in the proof of the ISC for pseudo-mice.
Suppose instead that $E$ is explicitly Dodd-solid above $\tau$ and $t\neq\emptyset$. 
So as discussed earlier, $\Ebar$ is Dodd-solid with respect to $\tbar$.
Since $\Mbar$ is $1$-sound and iterable, by \ref{fact:Dodd-sound}
and as in \cite[\S2]{extmax}, we can analyse the Dodd-structure of the extenders used in $\Tt$,
decomposing them into Dodd-sound extenders.
As there, there is exactly one extender
$G=E^\Tt_\alpha$ used along $b^\Tt$, $G$ has largest generator
$\gamma=i^\Uu(\max(\tbar))$, and there is a unique $\beta\leq_\Tt\alpha$
such that the Dodd-core $D$ of $G$ is in $\es_+(M^\Tt_\beta)$.
Moreover, $\tau_D\leq\taubar$,
and if $\beta<_\Tt\alpha$,
then letting $\varepsilon+1=\successor^\Tt(\beta,\alpha)$,
we have $M^{*\Tt}_{\varepsilon+1}=M^\Tt_\beta|\lh(D)$ and $\deg^\Tt_{\varepsilon+1}=0$,
and letting $k=i^{*\Tt}_{\varepsilon+1,\alpha}$, then $\crit(k)\geq\tau_D$,
\[ i^\Uu(\tbar)=k(t_D)\backslash\taubar \]
and
\[ \Ebar\rest\taubar\cup\tbar\equiv G\rest\taubar\cup k(t_D). \]
Note that $\rho_1(M^\Tt_\beta|\lh(D))\leq\tau_D\leq\taubar$.

Suppose $D\neq F^{M^\Tt_\beta}$.
Then $\beta=0$, as otherwise $\taubar<\lh(E^\Tt_0)\leq\rho_1(M^\Tt_\beta|\lh(D))$,
contradiction.
So $D\in\es^{\Mbar}$. Since $\taubar$ is an $\Mbar$-cardinal,
therefore $\tau_D=\taubar$, so
\[ G\rest\taubar\cup k(t_D)\equiv D\rest\tau_D\cup t_D, \]
so $\bar{E}=D\in\es^{\Mbar}$, as desired.

Now suppose instead that $D=F^{M^\Tt_\beta}$.
Then again $\beta=0$, since otherwise $\taubar\leq\lambda(E^\Tt_0)<\tau_D$, contradiction.
So $D=F^\Mbar$. We claim that $\alpha=0$, so $G=D$
is Dodd-sound, and it follows then (as in \cite{extmax}) that
$\Uu$ is trivial and we are done.
So suppose $0<_\Tt\alpha$; so $(0,\alpha]_\Tt$ does not drop in model.
Let $F^*$ be the first extender used along $(0,\alpha]_\Tt$.
So $\taubar\leq\nu_{F^*}$,  as $\taubar<\lh(E^\Tt_0)$ and $\taubar$ is an $\Mbar$-cardinal. Note 
\[ \Ubar=\Hull_{m+1}^{M^\Tt_\infty}(\taubar\cup k(t_D))=\Ult_m(\Mbar,G')\]
where $G'=F^{\Ult_0(\Mbar,F^*\rest\taubar)}$.
Therefore $\Ubar$ is the iterate of $\Mbar$ given by the tree $\Tt'$
which uses exactly two extenders,
$E^{\Tt'}_0=F^*\rest\taubar$ and $E^{\Tt'}_1=G'$. It follows that $\Tt=\Tt'$, $\Uu$ is trivial,
$E^\Tt_0=F^*$, $\nu_{E^\Tt_0}=\taubar$,
$\tau_D\leq\crit(E^\Tt_0)<\taubar$
and $E^\Tt_1=G=G'$.
So $E^\Tt_0\neq F^{\bar{M}}$ (as $\kappabar=\crit(E^\Tt_1)=\crit(D)$ and $D=F^{\bar{M}}$),
so
$\lh(E^\Tt_0)=\taubar^{+\Ubar}=\bar{\varsigma}<\taubar^{+\Mbar}$,
$\Mbar|\bar{\varsigma}$ is active with $E^\Tt_0$,
and $\kappabar<\crit(E^\Tt_0)$. It follows that $E^\Tt_0=\Fbar$ from
property \ref{item:dodd-soundness_witness,Fbar} above. But then by that property,
\[ \Ebar\rest(\crit(E^\Tt_0)\cup\tbar)\in\Ubar\cap\Mbar.\]
Also $\tbar=k(t_D\backslash\crit(E^\Tt_0))$
and
\[ D\equiv D\rest(\tau_D\cup t_D)\equiv D\rest(\crit(E^\Tt_0)\cup (t_D\backslash\crit(E^\Tt_0)))
 \equiv\Ebar\rest(\crit(E^\Tt_0)\cup\tbar).
\]
But then $D\in\Mbar$, contradiction.
\end{proof}

This completes the proof of the theorem.
\end{proof}

\section{Inductive condensation stack}\label{sec:con_stack}

In this section we prove Theorem \ref{thm:E_def_from_e}.
We first give the proofs of some older results, as their methods are then used in the proof 
of \ref{thm:E_def_from_e}.
The first is an observation due to Jensen.

\begin{fact}[Jensen]
Let $N$ be a premouse of height $\kappa>\om$,
where $\kappa$ is regular.
Let $P$ be a sound premouse such that $N\ins P$,
$\rho_\om^P=\kappa$, and $\om$-condensation
holds for $P$. Let $Q$ be likewise.
Then $P\ins Q$ or $Q\ins P$.
\end{fact}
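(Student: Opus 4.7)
The plan is to argue by contradiction: assume neither $P \ins Q$ nor $Q \ins P$. I will use $\om$-condensation for $P$ and $Q$, applied to the Mostowski-collapsed image of a carefully chosen Skolem hull in some $H_\theta$, to show that the two collapsed copies are both initial segments of $N$, and are hence comparable; then elementarity of the uncollapse will lift the comparability back to $P,Q$.

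Fix a large regular $\theta$, and choose $\alpha<\kappa$ such that (a) $X:=\Hull^{H_\theta}(\alpha\cup\{P,Q,N,\kappa\})$ satisfies $X\cap\kappa=\alpha$, and (b) $\alpha$ is a cardinal of $N$, so that $N|\alpha$ is passive. Since $\kappa$ is regular uncountable, closure points of the Skolem-hull function form a club in $\kappa$, and the cardinals of $N$ form a club in $\kappa$, so such $\alpha$ exist in their intersection. Let $\pi:M\to H_\theta$ be the inverse Mostowski collapse of $X$, and set $\bar P=\pi^{-1}(P)$, $\bar Q=\pi^{-1}(Q)$, $\bar N=\pi^{-1}(N)$. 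Because $X\cap\kappa=\alpha$ is transitive, $\pi\rest\alpha=\mathrm{id}$ and $\pi(\alpha)=\kappa$, so $\crit(\pi\rest\bar P)=\alpha$. By elementarity, $\bar P$ and $\bar Q$ are sound premice with $\rho_\om=\alpha$, both extending $\bar N$ (of height $\alpha$), and $\pi\rest\bar P:\bar P\to P$ and $\pi\rest\bar Q:\bar Q\to Q$ are fully elementary; we may also assume $\OR^P,\OR^Q>\kappa$, since otherwise $P=N$ (or $Q=N$) and the conclusion is immediate.

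Now I apply the $\om$-condensation hypothesis for $P$ to $\pi\rest\bar P$. Pick $n$ large enough that $\rho_{n+1}^{\bar P}=\alpha$ (such $n$ exist by $\om$-soundness); then $(n+1)$-condensation yields either $\bar P\ins P$, or $P|\alpha$ is active with some $F$ and $\bar P\pins\Ult(P|\alpha,F)$. But $P|\alpha=N|\alpha$ is passive by our choice of $\alpha$, so the ultrapower alternative fails, and $\bar P\ins P$. As $\OR^{\bar P}<\kappa=\OR^N$, in fact $\bar P\ins N$. The same argument for $Q$ yields $\bar Q\ins N$. Since initial segments of the premouse $N$ are linearly ordered, without loss of generality $\bar P\ins\bar Q$. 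This is a $\Delta_0$ statement about the transitive structures $\bar P,\bar Q\in M$, hence absolute, so $M\models\bar P\ins\bar Q$. By elementarity of $\pi$, $H_\theta\models P\ins Q$, so $P\ins Q$ in $V$, contradicting our assumption.

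The main technical point I foresee is the joint choice of $\alpha$ in the opening step: one needs simultaneously that the hull closes to $\alpha$ (a consequence of the regularity of $\kappa$) and that $\alpha$ is a cardinal of $N$, so that $N|\alpha$ is passive in Mitchell-Steel indexing and the ultrapower horn of condensation cannot arise. Without the passivity of $N|\alpha$, one would have to directly compare $\bar P\ins N$ with a hypothetical $\bar Q\pins\Ult(N|\alpha,F)$, requiring one to invoke coherence and the initial segment condition to place $\Ult(N|\alpha,F)$ itself back inside $N$; the clever choice of $\alpha$ sidesteps this altogether.
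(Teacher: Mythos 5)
Your proof follows the same outline as the paper's (take a hull of $H_\theta$, collapse, apply condensation, use comparability of segments, lift back via elementarity). However, there is a genuine gap in the opening step: you assert that ``the cardinals of $N$ form a club in $\kappa$,'' but this is false in general. If $\kappa=(\gamma^+)^P$ is a \emph{successor} cardinal of $P$, then $\gamma$ is the largest cardinal of $N=P|\kappa$, so the $N$-cardinals are bounded by $\gamma+1$ and certainly do not form a club in $\kappa$. (For a concrete instance, take $N=L_{\om_2^L}$, $\kappa=\om_2^L$ regular, where the only $N$-cardinals are $\om$ and $\om_1^L$.) Nothing in the hypotheses excludes this case, so you cannot in general intersect with the closure-point club to find $\alpha$ as you describe.

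As a result, in the case where $N$ has a largest cardinal one must confront the possibility that $N|\alpha$ is active for a chosen closure point $\alpha$. But this possibility, which you treat as something to be ``sidestepped,'' is in fact harmless, and your assessment of what would otherwise be needed is off. Note first that since $\kappa=\rho_\om^P$ is a cardinal of $P$, the level $P|\kappa=N$ is passive, and therefore $\es^{\bar P}(\alpha)=\pi^{-1}(\es^P(\kappa))=\emptyset$, i.e.\ $\bar P|\alpha$ is passive; and the same for $\bar Q$. Consequently, if $N|\alpha$ is active then $\bar P\ins P$ is impossible (it would force $\bar P|\alpha=P|\alpha=N|\alpha$ to be active), so both $\bar P$ and $\bar Q$ fall to the ultrapower alternative, landing as segments of the \emph{same} premouse $U=\Ult(N|\alpha,F^{N|\alpha})$. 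The ``mixed'' case you are worried about (one collapse an initial segment of $N$, the other a segment of $U$) simply cannot occur. The paper's proof exploits exactly this: it allows $N|\bar\kappa$ to be active or passive, and observes that in either case $\bar P$ and $\bar Q$ end up as segments of a common premouse (either $N$ or $U$), hence comparable, and the rest of the argument goes through unchanged. You should drop the requirement that $\alpha$ be an $N$-cardinal, let $\alpha$ be any closure point with $X\cap\kappa=\alpha$, and then split on whether $N|\alpha$ is active, putting both collapses into $N$ or both into $U$ accordingly.
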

\begin{proof}Suppose not. Taking a hull of $V$, it is easy to 
find $\bar{P},\bar{Q}$ such that $\bar{P}\nins\bar{Q}\nins\bar{P}$
and fully elementary  maps
$\pi:\bar{P}\to P$ and $\sigma:\bar{Q}\to Q$
and $\bar{\kappa}$
such that
\[ \crit(\pi)=\bar{\kappa}=\crit(\sigma)=\rho_\om^{\bar{P}}=\rho_\om^{\bar{Q}}<\kappa \]
and $\pi(\bar{\kappa})=\kappa=\sigma(\bar{\kappa})$.
So by condensation, either
\begin{enumerate}[label=\tu{(}\roman*\tu{)}]
 \item $\bar{P}\ins  N$
and $\bar{Q}\ins  N$, or
 \item$N|\bar{\kappa}$ is active
and $\bar{P}\ins U$ and $\bar{Q}\ins U$ where $U=\Ult(N|\bar{\kappa},F^{N|\bar{\kappa}})$.
\end{enumerate}
In either case, it follows that either $\bar{P}\ins\bar{Q}$
or $\bar{Q}\ins\bar{P}$, a contradiction.
\end{proof}

A slight adaptation gives:
\begin{fact}\label{fact:Jensen_regular}Let $M$ be a $(0,\om_1+1)$-iterable premouse with no 
largest proper segment.
Let $\kappa>\om$ be a regular cardinal of $M$.
Let $P\in M$ be a sound premouse such that $M|\kappa\ins P$,
$\rho_\om^P=\kappa$, and $\om$-condensation
holds for $P$. Then $P\pins M$.\end{fact}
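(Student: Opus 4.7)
The plan is to adapt the proof of Fact \ref{fact:Jensen_regular_in_V}, carrying out the Skolem-hull step inside $M$, since $\kappa$ is regular in $M$ but need not be regular in $V$. The overall strategy: produce an auxiliary $Q\pins M$ sharing the key properties of $P$ but with $\OR^Q>\OR^P$, and then use $\om$-condensation for both $P$ and $Q$, together with linearity of initial segments, to force $P\pins Q$, whence $P\pins M$.

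\textbf{Producing $Q$.} Because $P\in M$ with $\rho_\om^P=\kappa$ and $M|\kappa\ins P$, there is a surjection $\kappa\to\OR^P$ definable from parameters over $P$, and this surjection lies in $M$; hence $|\OR^P|^M\leq\kappa$. Let $\mu\leq\OR^M$ be the least $M$-cardinal strictly greater than $\kappa$ (with $\mu=\OR^M$ if there is none). Then $\OR^P<\mu$, and since $M$ has no largest proper segment one may choose $\alpha$ with $\OR^P<\alpha<\mu$ and $\alpha<\OR^M$. Set $Q=M|\alpha$. Then $Q\pins M$, $M|\kappa\ins Q$, and $\kappa$ is the largest cardinal of $Q$; soundness gives $\rho_\om^Q\leq\kappa$, while $\kappa$ being an $M$-cardinal forces $\rho_\om^Q\geq\kappa$. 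Iterability of $M$ passes to $Q$, and so $Q$ satisfies $\om$-condensation (cf.~\cite[Theorem 4.2***]{premouse_inheriting}).

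\textbf{Hull inside $M$.} Working in $M$, form a Skolem hull $X$ of $(\her_{\kappa^+})^M$, or of $M$ itself if $(\kappa^+)^M$ is undefined, with $P,Q,\kappa\in X$, $|X|^M<\kappa$, and arranged so that the transitive collapse $\pi:\bar Y\to X$ satisfies $\crit(\pi)=\bar\kappa:=\pi^{-1}(\kappa)<\kappa$. Regularity of $\kappa$ in $M$ is precisely what permits such an $X$: iterate the hull operation, at each stage adjoining all ordinals below the first gap, and take the union; the $M$-size stays below $\kappa$. Let $\bar P=\pi^{-1}(P)$ and $\bar Q=\pi^{-1}(Q)$. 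Then $\pi\rest\bar P:\bar P\to P$ and $\pi\rest\bar Q:\bar Q\to Q$ are fully elementary with $\crit(\pi\rest\bar P)=\crit(\pi\rest\bar Q)=\bar\kappa=\rho_\om^{\bar P}=\rho_\om^{\bar Q}$, meeting the hypothesis of $\om$-condensation.

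\textbf{Condensation and conclusion.} Apply $\om$-condensation for $P$ to $\pi\rest\bar P$: either (a) $\bar P\pins P$, or (b) $P|\bar\kappa$ is active with some extender $F$ and $\bar P\pins\Ult(P|\bar\kappa,F)$. Because $\bar\kappa<\kappa$ and $M|\kappa\ins P$, we have $P|\bar\kappa=M|\bar\kappa$; hence (a) together with $\OR^{\bar P}<\kappa$ gives $\bar P\ins M|\kappa$, while (b) gives $\bar P\pins U:=\Ult(M|\bar\kappa,F^{M|\bar\kappa})$ with $M|\bar\kappa$ active. The same dichotomy for $Q$ places $\bar Q$ in precisely the same premouse ($M|\kappa$ or $U$, as determined solely by $M|\bar\kappa$). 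Linearity of initial segments then yields $\bar P\ins\bar Q$ or $\bar Q\ins\bar P$, and applying $\pi$ gives $P\ins Q$ or $Q\ins P$. Since $\OR^Q>\OR^P$, only $P\pins Q$ is possible, so $P\pins Q\pins M$ and hence $P\pins M$, as required.

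\textbf{Main obstacle.} The principal technical point is the hull engineering inside $M$: arranging $\crit(\pi)=\bar\kappa$ so that the $\om$-lifting hypothesis $\rho_\om^{\bar P}\leq\crit(\pi\rest\bar P)$ actually holds, using only $M$-regularity of $\kappa$ rather than $V$-regularity as in Fact \ref{fact:Jensen_regular_in_V}. Once this is set up, the remaining argument is essentially a transcription of the proof of \ref{fact:Jensen_regular_in_V} with $M|\kappa$ playing the role of $N$; a minor secondary point is handling uniformly the cases in which $(\kappa^+)^M$ is or is not realized in $M$ when producing $Q$.
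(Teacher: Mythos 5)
Your proof is correct and follows essentially the same route as the paper's: the paper's proof is the single line ``Use the proof above with $Q\ins M$ such that $P\in Q$ and $\rho_\om^Q=\kappa$,'' and your write-up correctly supplies the content of that hint, namely choosing a proper segment $Q\pins M$ with $\rho_\om^Q=\kappa$ and $\OR^Q>\OR^P$, taking the Skolem hull \emph{inside} $M$ (where $\kappa$ is regular) to produce the collapses $\bar P,\bar Q$, and applying $\om$-condensation plus linearity of initial segments of $M|\kappa$ (resp.\ of $\Ult(M|\bar\kappa,F^{M|\bar\kappa})$) to conclude $P\pins Q\pins M$. The only cosmetic difference is that you arrange $\OR^P<\OR^Q$ directly where the paper arranges $P\in Q$; both serve the same purpose of ruling out $Q\ins P$.
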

\begin{proof}
Use the proof above with $Q\ins M$ such that $P\in Q$ and $\rho_\om^Q=\kappa$.
\end{proof}

A slight refinement of this argument gives:

\begin{fact}\label{fact:Jensen_regular_lgst_proper_seg} Let $M$ be a $(0,\om_1+1)$-iterable premouse.
Let $\kappa>\om$ be a regular cardinal of $M$.
Let $P\in M$ be a $(n+1)$-sound premouse such that $M|\kappa\ins P$,
$\rho_{n+1}^P=\kappa$, and $(n+1)$-condensation holds for $P$.
Then $P\pins M$.\end{fact}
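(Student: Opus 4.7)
The plan is to adapt the hull argument of Facts \ref{fact:Jensen_regular_in_V} and \ref{fact:Jensen_regular}. Two modifications are needed: since $M$ may have a largest proper segment, we cannot always find $Q\pins M$ with $\rho_\om^Q=\kappa$, and instead must work with an arbitrary $Q\pins M$ such that $P\in Q$; also, we invoke $(n+1)$-condensation for $P$ in place of $\om$-condensation. Suppose toward contradiction that $P\npins M$, and pick such a $Q$ (possible since $P\in M$); necessarily $P\npins Q$.

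Take a countable elementary hull $\pi:\bar Q\to Q$ in $V$ with $P\in\rg(\pi)$, $\crit(\pi)=\bar\kappa$, $\pi(\bar\kappa)=\kappa$, and the hull taken at a level ensuring $\rho_{m+1}^{\bar Q}=\bar\kappa$ for some $m$, so that condensation for $Q$ applies to $\pi$. Set $\bar P=\pi^{-1}(P)$. By elementarity, $\bar P$ is $(n+1)$-sound with $\rho_{n+1}^{\bar P}=\bar\kappa$, we have $M|\bar\kappa\ins\bar P\in\bar Q$, and $\bar P\npins\bar Q$. Apply $(n+1)$-condensation for $P$ (the hypothesis) to $\pi\rest\bar P$, and $(m+1)$-condensation for $Q$ (which holds by iterability of $Q$, via \cite[Theorem 4.2***]{premouse_inheriting}) to $\pi$. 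As in the original Jensen argument, the condensation alternatives align: if $M|\bar\kappa$ is passive, both $\bar P,\bar Q\pins M|\kappa$; if $M|\bar\kappa$ is active with extender $F$, both are $\pins U=\Ult(M|\bar\kappa,F)$. Either way, $\bar P$ and $\bar Q$ are proper initial segments of a common iterable premouse, so they are comparable under $\pins$. Now $\bar Q\pins\bar P$ is impossible since $\bar P\in\bar Q$ forces $\OR^{\bar P}<\OR^{\bar Q}$, and $\bar P\pins\bar Q$ contradicts $\bar P\npins\bar Q$; the contradiction gives $P\pins M$.

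The main obstacle is choosing the hull at a level where condensation for $Q$ applies uniformly: we need $\rho_{m+1}^{\bar Q}=\bar\kappa=\crit(\pi)$ for some appropriate $m$. Since $\rho_\om^Q$ may be $\ll\kappa$ when $M$ has a largest proper segment, one must pick $m$ so that the hull captures exactly the $(m+1)$-generators above $\kappa$; this requires including suitable parameters (such as witnesses to $p_{m+1}^Q$) among the generators of the hull. A secondary technical point is verifying the alignment of the condensation cases via the coherence of $M$'s extender sequence at $\bar\kappa$, so that both $\bar P$ and $\bar Q$ actually reside in the same ambient premouse and are therefore $\pins$-comparable there.
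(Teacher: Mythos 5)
Your overall approach is correct and matches what the paper intends (it gives no explicit proof, describing the fact only as a ``slight refinement'' of Facts \ref{fact:Jensen_regular_in_V} and \ref{fact:Jensen_regular}). Taking a hull $\pi:\bar Q\to Q$ with $P\in\rg(\pi)$ and deriving both $\bar P$ and $\bar Q$ from it, then applying $(n+1)$-condensation to $\pi\rest\bar P$ and $(m+1)$-condensation to $\pi$, is the right decomposition.

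Your two flagged concerns are worth addressing more precisely, though. First, the worry that $\rho_\om^Q$ might be $\ll\kappa$ cannot arise: since $\kappa$ is a cardinal of $M$ and $Q\pins M$ with $\OR^Q>\kappa$, the $\om$-soundness of $Q$ forces $\rho_\om^Q\geq\kappa$ (otherwise $\J(Q)\ins M$ would collapse $\kappa$). You should instead simply take $Q$ to be the \emph{least} proper segment of $M$ containing $P$; because $|P|^M=\kappa$ (by $(n+1)$-soundness of $P$ with $\rho_{n+1}^P=\kappa$), one gets $\OR^Q<(\kappa^+)^M$ whenever $(\kappa^+)^M$ exists, and $\rho_\om^Q\leq\kappa$ when $\kappa$ is the largest cardinal; either way $\rho_\om^Q=\kappa$, and then $m$ is determined. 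Second, the ``alignment'' of condensation cases is not really a matter of coherence of $\es^M$: rather, note that $\pi(\bar\kappa)=\kappa$ and both $P|\kappa=Q|\kappa=M|\kappa$ are \emph{passive} (since $\kappa$ is an $M$-cardinal), so by elementarity $\bar P|\bar\kappa$ and $\bar Q|\bar\kappa$ are both the passive structure $M||\bar\kappa$. Hence if $M|\bar\kappa$ is active, neither $\bar P$ nor $\bar Q$ can be a segment of $M$, and both fall into the ultrapower alternative; if $M|\bar\kappa$ is passive, the ultrapower alternative is unavailable and both land in $M|\kappa$. This forces the same alternative for both, making $\bar P,\bar Q$ segments of a common premouse, and the contradiction ($\bar P\in\bar Q$ and $\bar P\nins\bar Q$ yet $\bar P,\bar Q$ are $\pins$-comparable) then follows as you say.
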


The second ingredient is an argument of Woodin's,
which is used in the proof of
Corollary \ref{cor:L(pow(kappa))^M_AC} below.
Steel noticed that
 \ref{cor:L(pow(kappa))^M_AC}
follows from Theorem \ref{thm:proj_to_strength} combined with Woodin's argument.

\begin{proof}[Proof of Corollary \ref{cor:L(pow(kappa))^M_AC}]
We have that $M$ is $(0,\om_1+1)$-iterable,
$\kappa$ is an uncountable cardinal in $M$ and $\kappa^{+M}<\OR^M$.
We want to see that 
$M|\kappa^{+M}$ is definable from parameters over $\her=(\her_{\kappa^+})^M$.
There are two cases.

\begin{casethree} $M$ has no cutpoint 
in $[\kappa,\kappa^{+M})$.

Then there are unboundedly many 
$\gamma<\kappa^{+M}$ indexing an $M$-total extender. So by 
\ref{thm:proj_to_strength},
given a premouse $P\in\her$ such that $M|\kappa\ins P$ and $\rho_\om^P=\kappa$, we have
$P\pins M|\kappa^{+M}$
iff there is $E\in\her$ such that $P\pins\Ult(M|\kappa,E)$ 
and $\her\sats$``$E$ is a countably complete short extender''.
This gives a definition of $M|\kappa^{+M}$ over $\her$ from the parameter $M|\kappa$,
which suffices.
\end{casethree}
\begin{casethree} Otherwise ($M$ has a cutpoint $\gamma_0\in[\kappa,\kappa^{+M})$).

The proof in this case is due to Woodin, and was found earlier.
Let $X$ be the set of all $H\in\HC^M$ 
such that there is $P\pins M|\kappa^{+M}$ and $\pi\in M$
such that $\pi:H\to P$ is elementary.
Since $\kappa^{+M}<\OR^M$, we have $X\in M$ and $X$ is essentially a subset of $\om_1^M$ in $M$.
So $X\in\her$.
Let $P\in M$ be a sound premouse such that $M|\gamma_0\ins P$,
$\gamma_0$ is a cutpoint of 
$P$ and $\rho_\om^P\leq\gamma_0$. We claim that the following are equivalent:
\begin{enumerate}[label=(\roman*)]\item\label{item:P_pins_M} $P\pins M$, \item\label{item:countable_submodels_in_X} 
$\her\sats\text{``Every countable elementary 
submodel of 
}P\text{ is in }X\text{''}$.
\end{enumerate}It follows that $M|\kappa^{+M}$ is definable over 
$\her$
from the parameter $(X,M|\gamma_0)$, which suffices.
Now \ref{item:P_pins_M} implies \ref{item:countable_submodels_in_X} by definition. So suppose \ref{item:countable_submodels_in_X} holds. Let $Q$ be such that
$P\in Q\pins M$, with 
$\rho_\om^{Q}\leq\gamma_0$.
Working in $M$, let $Y\elem Q$ be countable, with $P\in Y$.
The transitive collapses $\Pbar$ of $Y\cap P$ and $\Qbar$ of $Y$ are in $X$,
so can be compared in $V$.
But $\Pbar|\gammabar_0=\Qbar|\gammabar_0$ where $\gammabar_0$
is a cutpoint of both $\Pbar,\Qbar$, and $\Pbar,\Qbar$ are sound and project $\leq\gammabar_0$.
So standard calculations give that $\Pbar\ins\Qbar$, so $P\ins Q$.\qedhere\end{casethree}
\end{proof}

Woodin's argument above makes use of the parameter
$X$. We can actually replace this parameter with $\Momone^M$:

\begin{lem}\label{lem:countable_submodels}
Let $N$ be an $(0,\om_1+1)$-iterable premouse with no largest proper segment. Let $M\pins N$ and 
$H\in\HC^N$ and $\pi:H\to M$ be elementary
with $\pi\in N$. Then there is 
$\Mbar\pins N|\om_1^N$ 
and 
an elementary $\pibar:H\to\Mbar$ with $\pibar\in N$.\end{lem}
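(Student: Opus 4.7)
The plan is to reduce at once to the case $\OR^M\geq\om_1^N$ (otherwise $\bar M:=M$ and $\bar\pi:=\pi$ already work), then use the no-largest-proper-segment hypothesis to fix some $Q\pins N$ with $M\pins Q$, take a countable elementary hull of $Q$ inside $N$, and argue via condensation that the collapsed image of $M$ is a proper segment of $N|\om_1^N$.

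In more detail: working inside $N$, I choose a countable $Y\elem Q$ with $\{H,\pi,M\}\cup H\cup\rg(\pi)\sub Y$, and include enough further small ordinals in $Y$ that $\rho_\om^Q$ is fixed by the collapse map. Let $\sigma\colon\bar Q\to Q$ be the inverse of the transitive collapse and set $\bar M:=\sigma^{-1}(M)$, $\bar\pi:=\sigma^{-1}(\pi)$. Since $H$ is transitive with $H\sub Y$, $\sigma\rest H=\id_H$, so elementarity of $\sigma$ forces $\bar\pi\colon H\to\bar M$ to be fully elementary; and $\bar\pi\in\bar Q\in N$ gives $\bar\pi\in N$. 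Countability of $\bar Q$ in $N$ yields $\OR^{\bar M}<\om_1^N$, so it suffices to show $\bar M\pins N$.

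For this last step, note that $\bar Q$ is $\om$-sound with $\rho_\om^{\bar Q}=\rho_\om^Q<\crit(\sigma)$, and $\sigma\colon\bar Q\to Q$ is fully elementary. Applying $\om$-condensation for $Q$ (available since $Q\pins N$ and $N$ is iterable) yields either $\bar Q\pins Q$, in which case $\bar M\pins\bar Q\pins N$ and we are done, or the ``ultrapower'' alternative $\bar Q\pins\Ult(Q|\rho,F)$ with $\rho=\rho_\om^Q$ and $Q|\rho$ active with $F$. The main obstacle is handling this alternative; I plan to rule it out from the outset by choosing $Q$ carefully, so that $\rho_\om^Q$ lands on a passive ordinal of $N$ below $\om_1^N$. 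Since $N$ has no largest proper segment there is ample room above $M$ in $N$, and since the active indices of $N$ form a sparse set while the $\om$-projectum of $N|\beta$ drops below $\om_1^N$ for large $\beta$, such a $Q$ can be produced by going sufficiently far up in $N$ above $M$ to a level whose $\om$-projectum lies at a passive index; with this choice, $Q|\rho$ is passive, the ultrapower case of condensation is vacuous, and one gets $\bar Q\pins Q\pins N$, hence $\bar M\pins\bar Q\pins N|\om_1^N$ as required.
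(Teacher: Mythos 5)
Your general strategy — collapse a countable elementary hull of a level $Q\pins N$ above $M$ and invoke condensation to place the collapse inside $N|\om_1^N$ — is aligned with the paper's, and you are right that the ultrapower alternative in condensation is the obstruction that must be dealt with. However, the specific mechanism you propose for neutralising it does not work.

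You claim you can choose $Q$ above $M$ so that $\rho_\om^Q<\om_1^N$ and $N|\rho_\om^Q$ is passive, on the grounds that ``the $\om$-projectum of $N|\beta$ drops below $\om_1^N$ for large $\beta$''. This is false, indeed backwards. Since you have reduced to the case $\OR^M\geq\om_1^N$, any $Q\pins N$ with $M\pins Q$ has $\OR^Q\geq\om_1^N$. But if $\rho_\om^Q<\om_1^N$, then by $\om$-soundness $Q=\Hull_\om^Q(\rho_\om^Q\cup p_\om^Q)$, so $Q$ has cardinality $<\om_1^N$ in $N$, forcing $\OR^Q<\om_1^N$, a contradiction. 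So in the only case you need to treat, every admissible choice of $Q$ has $\rho_\om^Q\geq\om_1^N$, and your plan for making the ultrapower case vacuous is unavailable. (A secondary confusion: what needs to be a passive index is $\rho_\om^{\bar Q}$, the projectum of the \emph{collapse}, which is what the condensation alternative refers to — not $\rho_\om^Q$ itself.)

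The paper avoids this trap by not taking a naive countable elementary hull at all: it applies Lemma \ref{lem:sound_hull} to produce a parameter $q$ such that $(\om,q)$ is $1$-self-solid for $P$ (with $\pi,H,M\in\Hull_1^P(\{q\})$), and that lemma is built precisely to force the collapse to land as a proper segment of $N$, by arranging that the hull contains a suitable non-generator ordinal $\eta$ whose membership in the hull rules out the ultrapower alternative (see condition (\ref{eqn:eta_generator}) in the proof of \ref{lem:sound_hull}). If you want to salvage your approach, that is the missing ingredient: you must control $\rho_\om^{\bar Q}$ and show it is not an active index, and this requires an argument of the \ref{lem:sound_hull} type rather than just Skolem-hulling and hoping the projectum lands in the right place.
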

\begin{proof}
 Let $M\pins P\pins N$ be such that $\pi\in P$.
Let $q\in[\OR^P]^{<\om}$ be 
such that $(\om,q)$ is $1$-self-solid for $P$ and such that
$\pi,H,M\in\Hull_1^P(\{q\})$.
Let
$\Pbar=\cHull_1^P(\{q\})$.                                                                                                                                                                                                                                                                                                                                                                                                                                                                                                                                                                                                                                                                                                                                                                                                                                                                                                                                                                                                                                                                                                                                                                                                                                                                                                                                                                                                                                                                                                                                                                                                                                                                                                                                                                                                                                                                                                                                                                                                                                                                                                                                                                                                                                                                                                                                                                                                                                                                                                                                                                                                                                                                                                                                                                                                                                                                                                                                                                                                                                                                                                                                                                                                                                                                                                                                                                                                                                                                                                                                                                                                                                                                                                                                                                                                                                                                                                                                                                                                                                                                                                                                                                                                                                                                                                                                                                                                                                                                                                                                                                                                                                                                                                                                                                                                                                                                                                                                                                                                                                                                                                                                                                                   
 Then by \ref{lem:sound_hull}, $\Pbar\pins N|\om_1^N$. Let $\sigma:\Pbar\to P$ be the uncollapse.
 Then $\sigma(H)=H$. Let $\sigma(\pibar)=\pi$ and $\sigma(\Mbar)=M$. Then $\Mbar\pins\Pbar$ and 
$\pibar:H\to\Mbar$ elementarily, so we are done.
\end{proof}

Similarly:
\begin{lem}\label{lem:finer_countable_submodels}
Let $N$ be a $(0,\om_1+1)$-iterable premouse. Let $M\pins N$ and
$H\in\HC^N$ and $m<\om$ and $\pi:H\to M$ be an $m$-lifting \tu{((}weak, near\tu{)} $m$-embedding 
respectively\tu{)}
with $\pi\in N$. Then there is 
$\Mbar\pins N|\om_1^N$ 
and 
an $m$-lifting \tu{((}weak, near\tu{)} $m$-embedding respectively\tu{)} $\pibar:H\to\Mbar$ with 
$\pibar\in N$.\end{lem}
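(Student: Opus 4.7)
The plan is to follow the construction in the proof of Lemma \ref{lem:countable_submodels} essentially verbatim, and then observe that each of the four embedding notions transfers through the uncollapse map in the same way that full elementarity did there.

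Concretely, pick $P$ with $M \pins P \pins N$, $\pi \in P$, chosen large enough that $H$ is countable in $P$ (possible since $H \in \HC^N$, by taking $P$ to contain a bijection $\om \to H$). Select $q \in (\OR^P)^{<\om}$ so that $(\om, q)$ is $1$-self-solid for $P$ and $\pi, H, M \in \Hull_1^P(\{q\})$, and set $\Pbar = \cHull_1^P(\{q\})$. By Lemma \ref{lem:sound_hull}, $\Pbar \pins N | \om_1^N$. Let $\sigma : \Pbar \to P$ be the uncollapse, and define $\Mbar, \pibar \in \Pbar$ by $\sigma(\Mbar) = M$ and $\sigma(\pibar) = \pi$. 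Because $H$ is countable in $P$ and $H \in \rg(\sigma)$, $\Sigma_1$-closure of the hull yields a bijection $\om \to H$ in $\rg(\sigma)$; combined with $\om \sub \rg(\sigma)$ this gives $H \sub \rg(\sigma)$, so $\sigma \rest H = \id$. Thus $\pibar : H \to \Mbar$ with $\sigma \com \pibar = \pi$, and $\pibar \in \Pbar \pins N | \om_1^N$.

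The real content is to verify that $\pibar$ inherits the hypothesized property $\mathcal{P}$. Each of ``$m$-lifting'', ``near $m$-embedding'', ``weak $m$-embedding'', ``$m$-embedding'' is defined by a scheme of preservation clauses: implications of the form ``$H \sats \varphi(\vec x, p_k^H)$ $\Rightarrow$ $M \sats \varphi(\pi(\vec x), p_k^M)$'' with $\varphi$ in the appropriate syntactic class for $\mathcal{P}$, plus (for the finer notions) parameter clauses of the form $\pi(p_k^H) = p_k^M$ for suitable $k$. Every such single instance is first-order expressible over $P$ from $(H, M, \pi)$, using the internal $\Sigma_n$-satisfaction predicates available over $P$ as a premouse, and hence is transferred by $\sigma$ from its truth over $P$ witnessed by $(H, M, \pi)$ to its truth over $\Pbar$ witnessed by $(H, \Mbar, \pibar)$; by transitivity of $\Pbar$ the resulting statement holds in $V$. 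The parameter clauses are handled via $\sigma(p_k^{\Mbar}) = p_k^M$ (by elementarity of $\sigma$) together with $\sigma \rest H = \id$, so $\pi(p_k^H) = p_k^M$ pulls back to $\pibar(p_k^H) = p_k^{\Mbar}$.

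The main obstacle is purely bureaucratic: each of the four notions has slightly different preservation / parameter clauses and must be verified in turn. No new idea is required beyond those already present in Lemma \ref{lem:countable_submodels}; the only substantive input is the standard definability of the $\rSigma_n$-satisfaction relations over a premouse, which suffices to express each clause as a first-order assertion that $\sigma$ can transfer.
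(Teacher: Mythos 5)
Your construction breaks down in precisely the case the paper singles out, namely $N=\J(M)$. If $N$ is the rudimentary closure of $M\cup\{M\}$, then the only proper initial segments of $N$ are $M$ and the segments of $M$; there is no premouse $P$ with $M\pins P\pins N$, so there is no $P$ with $M\pins P\pins N$ and $\pi\in P$ to take a hull of. This is exactly why Lemma~\ref{lem:countable_submodels} carries the hypothesis that $N$ has \emph{no largest proper segment}, and why Lemma~\ref{lem:finer_countable_submodels} drops that hypothesis: the new content of \ref{lem:finer_countable_submodels} is exactly the case your proof cannot reach.

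In that case the paper's move is genuinely different. Since $\pi\in\J(M)\setminus M$, one has $\pi\in\rSigma_k^M(\{x\})$ for some $k<\om$, $x\in M$, and one takes $\Mbar=\cHull_{n+1}^M(\{q\})$ with $n>k+m+5$ and $q$ chosen as in \ref{lem:sound_hull} so that $x,H\in\rg(\sigma)$ (where $\sigma:\Mbar\to M$ is the uncollapse). One then \emph{defines} $\pibar$ over $\Mbar$ from $\sigma^{-1}(x)$ by the same $\rSigma_k$ formula; the extra $m+5$ degrees are what you pay for the fact that $M$ is no longer an element of the structure being hulled: the statement ``$\pi$ is $m$-lifting'' is now roughly $\rSigma_{k+m+\text{const}}^M$ rather than $\Pi_1^P$, so a degree-$1$ hull does not preserve it. Your transfer argument via ``$\Sigma_n$-satisfaction predicates available over $P$'' implicitly uses that $M$ and $H$ are elements of $P$, which is exactly what fails here. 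When an intermediate $P$ does exist your argument is fine and matches the easy case, but as written your proposal is missing the actual point of the lemma.
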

\begin{proof}
Consider the case that $N=\J(M)$
and $\pi:H\to M$. Then there is $k<\om$ and $x\in M$ such that $\pi$ is $\rSigma_k^M(\{x\})$. Argue 
as in the proof of \ref{lem:countable_submodels}, but at degree $n$ instead of $1$,
with $n>k+m+5$.
\end{proof}

Woodin's argument above is abstracted into the following definition:
\begin{dfn}
 Let $M$ be a $(0,\om_1+1)$-iterable premouse satisfying ``$\om_1$ exists'', with no largest proper 
segment.
 Then $\css^M$ (\emph{countable substructures}) denotes the set of all $H\in\HC^M$ such that for 
some $Q\pins M$,
 there is $\pi\in M$ such that $\pi:H\to Q$ is elementary. (So by 
\ref{lem:countable_submodels}, $\css^M$ is definable over $\Momone^M$, uniformly in $M$.)
Let $P,Q\in M$ be sound premice. Working in $M$,
say that $Q$ is \emph{$\Momone^M$-verified} 
iff the transitive collapse of every countable elementary substructure of $Q$ is in $\css^M$,
and say that $Q$ is an \emph{$(\Momone^M,P)$-lower part} premouse
iff $P\ins Q$, $P$ is a cutpoint of $Q$, $\rho_\om^Q\leq\OR^P$ and $Q$ is $\Momone^M$-verified.
The stack of all $(\Momone^M,P)$-lower part premice 
$Q\in M$ is denoted $\Lp^M_\Momone(P)$.\end{dfn}

Note that $\Lp^M_\Momone(P)$ is definable over $\univ{M}$ from 
$\Momone^M,P$; the fact that it forms a 
stack follows from the proof of \ref{cor:L(pow(kappa))^M_AC}.

In order to prove Theorem \ref{thm:E_def_from_e}, it easily suffices to prove
that if $M$ is passive, $(0,\om_1+1)$-iterable and satisfies $\ZFC^-+$``$\om_1$ exists'',
then $\es^M$ is definable over $\univ{M}$ from $\es^M\rest\om_1^M$,
uniformly in $M$.
We will in fact prove a stronger fact, Theorem \ref{thm:local_E_def_from_e} below, making do 
with less than $\ZFC^-$.
We may assume that $M$ has a largest cardinal $\theta$.
The proof breaks into different cases, 
depending on the nature of $M$ above $\theta$.
Clearly the cases are not mutually exclusive (Case \ref{case:theta_regular}
is in fact subsumed by Case \ref{case:cof(theta)>om}).

\begin{dfn}Let $M$ be a premouse. Let $\kappa<\theta$ be cardinals of $M$.
We say that $\kappa$ is \emph{$\her_\theta$-strong} in $M$
iff there is $E\in M$ such that $M\sats$``$E$ is a countably complete short extender''
and $\crit(E)=\kappa$ and $\her^M_\theta\sub\Ult_0(M,E)$.
\end{dfn}
\begin{dfn}
 A passive premouse $M$ is \emph{eventually constructible}
 iff $M=\J_\alpha(R)$ for some $R\pins M$ and $\alpha>0$.
\end{dfn}
\begin{rem}
 In the theorem statement below, in each case we specify
 definability classes $\Gamma,\Lambda$.
 The case specification is $\Gamma^{\univ{M}}(\{M|\theta\})$, meaning that there is a $\Gamma$ 
formula $\varphi$
such that for any $(0,\om_1+1)$-iterable premouse $M$ satisfying ``$\om_1$ exists and $\theta$ is 
the largest cardinal'', the case hypothesis holds of $M$ iff $\univ{M}\sats\varphi(M|\theta)$.
In the given case, the definition of $\es^M$ is $\Lambda^{\univ{M}}(\{M|\theta\})$.
(The definability of the case specification is  used in 
defining $M|\theta$ from 
$\Momone^M$ over $\univ{M}$.)
\end{rem}
\begin{dfn}
 Let $M$ be a passive premouse with a largest cardinal $\theta\geq\om_1^M$.
 We say that $M$ is \emph{tractable} iff either
 \begin{enumerate}[label=(\roman*)]
  \item  $\theta$ is regular in $M$,
 or \item $\theta$ is a cutpoint of $M$, or \item  $M$ has no cutpoint in $[\theta,\OR^M)$,
 or \item  $\cof^M(\theta)>\om$, or \item $M\sats$``$\theta$ is not a limit of cardinals which are $\her_\theta$-strong'',
 or \item $\cof^{\bfSigma_2^{\univ{M}}}(\OR^M)>\om$,
 or \item $\cof^{\bfSigma_1^{\univ{M}}}(\OR^M)>\om$ and $M$ is eventually constructible.\qedhere
  \end{enumerate}
\end{dfn}

\begin{tm}\label{thm:local_E_def_from_e}
 Let $M$ be a passive $(0,\om_1+1)$-iterable premouse satisfying ``$\om_1$ exists''.
Then:
\begin{enumerate}[label=\tu{(}\alph*\tu{)}]
 \item\label{item:Sigma_2-cof>om} If $M$ is tractable then
$\es^M$ is 
$\Sigma_4^{\univ{M}}(\{\Momone^M\})$,  uniformly in such $M$.
\item\label{item:ZFC} If $\univ{M}\sats\PS$ then $\es^M$ is $\Sigma_2^{\univ{M}}(\{\Momone^M\})$,
uniformly in such $M$.
\item\label{item:various} Suppose that $M$ has largest cardinal $\theta$ and either:
 \begin{enumerate}[label=\tu{(}\roman*\tu{)},ref=\tu{(}\roman*\tu{)}]
  \item\label{case:theta_regular} $\theta$ is regular in $M$; and let 
$(\Gamma,\Lambda)=(\Pi_1,\Sigma_1)$, or
  \item\label{case:theta_cutpoint} $\theta$ is a cutpoint of $M$; let 
$(\Gamma,\Lambda)=(\Pi_2,\Sigma_2)$, or
  \item\label{case:no_cutpoint} $M$ has no cutpoint in $[\theta,\OR^M)$;
  let $(\Gamma,\Lambda)=(\Pi_3,\Sigma_2)$, or
  \item\label{case:cof(theta)>om} $\cof^M(\theta)>\om$; let 
$(\Gamma,\Lambda)=(\Pi_1,\Sigma_1)$, or
 \item $M\sats$``$\theta$ is not a limit of $\her_\theta$-strong cardinals''; let $(\Gamma,\Lambda)=(\Sigma_3,\Sigma_1)$, or
 \item\label{case:subtle_case} $\cof^{\bfSigma_2^{\univ{M}}}(\OR^M)>\om$;\footnote{By
 $\cof^{\bfSigma_n^{\univ{M}}}(\OR^M)$,
 we mean the least ordinal $\mu$ such that there is a \emph{total}
 unbounded function $f:\mu\to\OR^M$ which is $\bfSigma_n^{\univ{M}}$-definable.
 Note that this is standard $\Sigma_n$, not $\rSigma_n$.}let 
$(\Gamma,\Lambda)=(\Pi_5,\Sigma_4)$, or
  \item\label{case:less_subtle_case}  $\cof^{\bfSigma_1^{\univ{M}}}(\OR^M)>\om$ and $M=\J_\alpha(R)$ for some $R\pins M$ and $\alpha>0$;
let
$(\Gamma,\Lambda)=(\Pi_3\wedge\Sigma_3,\Sigma_3)$.
 \end{enumerate}
  Then $\es^M$ is $\Lambda^{\univ{M}}(\{M|\theta\})$, and the case specification
 is $\Gamma^{\univ{M}}(\{M|\theta\})$, both uniformly in such $M$.
 \end{enumerate}
\end{tm}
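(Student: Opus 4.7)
The strategy is an ``inductive condensation stack'' argument that builds $\es^M$ layer by layer over $\Momone^M$. I would reduce (a) and (b) to the case-by-case statement (c): ``tractable'' is by definition the disjunction of the seven listed cases, so (a) follows by taking the maximum complexity $\Sigma_4$ among them; and $M\sats\PS$ forces $M$ to have no largest cardinal (else the power set of the largest cardinal would violate $\PS$), so $\es^M=\bigcup_\theta \es^{M|(\theta^+)^M}$ as $\theta$ ranges over $M$-cardinals. Each $M|(\theta^+)^M$ has largest cardinal $\theta$ and regular height, falling into one of the simpler sub-cases of (c) with $\Sigma_1$ bound, so the union yields $\Sigma_2$.

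The principal predicate is ``$\Momone^M$-verified'': a sound premouse $P$ is verified iff every countable elementary substructure of $P$ lies in $\css^M$. By Lemma \ref{lem:countable_submodels}, $\css^M$ is definable from $\Momone^M$ uniformly, so every $P\pins M$ is automatically $\Momone^M$-verified; the harder converse — that $\Momone$-verification together with a case-appropriate condensation or ultrapower-witness criterion implies $P\pins M$ — is the content of the theorem. I would ground the induction at the base level $M|(\theta^+)^M$ via Corollary \ref{cor:L(pow(kappa))^M_AC} applied with $\kappa=\theta$, using $\css^M$ in place of Woodin's auxiliary parameter $X$ from the original proof there.

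The inductive step then characterizes, in each of the seven cases, the sound premice $Q$ extending the previously-determined segment that are themselves initial segments of $M$. In Cases \ref{case:theta_regular} and \ref{case:cof(theta)>om} (where $\theta$ is regular in $M$ or has uncountable $M$-cofinality), Facts \ref{fact:Jensen_regular} and \ref{fact:Jensen_regular_in_V} show that $\omega$-condensation plus $\Momone$-verification suffices, and since $\omega$-condensation is expressible locally, this yields $\Sigma_1$. Case \ref{case:theta_cutpoint} (cutpoint) invokes Fact \ref{fact:Jensen_regular_lgst_proper_seg} together with the fact that extenders above $\theta$ have critical point $\geq\theta$, giving $\Sigma_2$. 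Case \ref{case:no_cutpoint} (no cutpoint above $\theta$) runs Woodin's argument from the proof of Corollary \ref{cor:L(pow(kappa))^M_AC}: each candidate $Q$ is witnessed by some $E\in M$ with $\her_\theta^M\sub\Ult(M,E)$ and $Q\pins\Ult(M|\theta,E)$, with the converse direction supplied by Theorem \ref{thm:proj_to_strength} (and, where needed, Theorem \ref{tm:ext_tau_a_card}). The fifth sub-case (``$\theta$ not a limit of $\her_\theta$-strong cardinals'') combines Cases \ref{case:theta_cutpoint} and \ref{case:no_cutpoint} above the threshold below which all $\her_\theta$-strong witnesses lie. Cases \ref{case:subtle_case} and \ref{case:less_subtle_case} use the $\bfSigma_n^{\univ{M}}$-cofinality hypothesis on $\OR^M$ to iterate the stack transfinitely up to $\OR^M$: the cofinal sequence has bounded definitional complexity, which controls the quantifier alternations in the closed-off definition.

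The main obstacle I anticipate is Case \ref{case:subtle_case}, namely achieving $\Sigma_4$ rather than worse. The stack induction up to $\OR^M$ must be presented along a $\bfSigma_2^{\univ{M}}$-cofinal sequence, and the outer $\Sigma_2$ envelope must accommodate the $\Momone$-verification and condensation check of each intermediate segment without adding further quantifier alternations. Obtaining this precise bound requires a careful interleaving of the cofinal enumeration with the layer-by-layer verification, and a tight accounting of how the predicates combine; a naive formulation pushes the total complexity to $\Sigma_5$ or beyond. A secondary difficulty is uniformity: the case-specification formulas must be uniform across all $M$ satisfying the hypothesis, and where several cases simultaneously apply, the corresponding definitions of $\es^M$ must agree, which requires that each case-appropriate formula be verified to compute the true extender sequence and not merely a correct subset.
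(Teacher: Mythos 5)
Your high-level decomposition --- reducing (a) and (b) to the case analysis (c), using $\css^M$ via Lemma \ref{lem:countable_submodels} as a $\Momone$-definable surrogate for Woodin's parameter $X$, and grounding the cutpoint/no-cutpoint dichotomy in Corollary \ref{cor:L(pow(kappa))^M_AC} and Theorem \ref{thm:proj_to_strength} --- matches the paper. But your treatment of the remaining cases would not go through as written.

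For Cases \ref{case:theta_cutpoint}, \ref{case:cof(theta)>om} and 5 you invoke Facts \ref{fact:Jensen_regular_in_V}, \ref{fact:Jensen_regular} and \ref{fact:Jensen_regular_lgst_proper_seg}, but all three require $\theta$ to be \emph{regular}: they rest on forming a hull whose critical point is exactly $\theta$ and whose collapse projects to that critical point. When $\theta$ is singular --- which is the whole substance of Cases \ref{case:cof(theta)>om} and 5, and is permitted in Case \ref{case:theta_cutpoint} --- no such hull exists. The paper's Case \ref{case:cof(theta)>om} instead takes a countable hull $\bar Q=\cHull_{k+1}^Q(\emptyset)$ of a segment $Q\pins M$ containing the candidate $P$, forms the phalanx $((\bar Q,k,\bar\theta),(\bar P,n),\bar\theta)$, and proves it iterable by building a near $n$-embedding $\sigma':\bar P\to P'$ into a proper segment $P'=\cHull_{n+1}^P(\eta\cup\{\pvec_{n+1}^P\})$ with $\eta=\sup\pi``\bar\theta$; the hypothesis $\cof^M(\theta)>\om$ is precisely what gives $\eta<\theta$ so that $P'\pins M$. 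Case 5 is a variant of this phalanx argument, not a combination of Cases 2 and 3, and the paper's Case \ref{case:theta_cutpoint} uses Woodin's $\css^M$-comparison (as in Cor.~\ref{cor:L(pow(kappa))^M_AC}), not Fact \ref{fact:Jensen_regular_lgst_proper_seg}.

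For Cases \ref{case:subtle_case} and \ref{case:less_subtle_case} the role of the cofinality hypothesis is not what you sketch. It is not used to enumerate a cofinal sequence through $\OR^M$ along which to run the stack induction; it is used to \emph{bound} an $\om$-length back-and-forth construction strictly below $\OR^M$. The paper calls $P$ \emph{excellent} when it is good, $\Lp_\Momone^M(P)$ has the same universe as $M$, and $1$-condensation holds at every level of $\Lp_\Momone^M(P)$. To compare two excellent $P,Q$ it alternately chooses $Q_{n+1}\pins M$ capturing $P_n$ and $P_{n+1}\pins\Lp_\Momone^M(P)$ capturing $Q_{n+1}$, yielding stacks $\widetilde P=\stack_{n}P_n$ and $\widetilde Q=\stack_n Q_n$ with a common universe $U$; since this interleaving is $\Sigma_2^{\univ M}$-definable and $\cof^{\bfSigma_2^{\univ M}}(\OR^M)>\om$, one gets $\OR^U<\OR^M$. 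Then a hull $\Hull_1$ at a sufficiently large $M$-cardinal $\gamma<\theta$, chosen to contain both standard parameters and their solidity witnesses, lets $1$-condensation identify the two collapses, giving $\widetilde P=\widetilde Q$ and hence $P\ins Q$ or $Q\ins P$. This interleaving-plus-condensation device is the missing central idea, and it (not a cofinal enumeration) is what keeps the complexity to $\Sigma_4$ in Case \ref{case:subtle_case}.
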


\begin{proof}[Proof of Theorem \ref{thm:local_E_def_from_e}]
Parts \ref{item:Sigma_2-cof>om}
and \ref{item:ZFC} follow immediately from part \ref{item:various}
by an easy induction on $M$-cardinals.

Part \ref{item:various}: We split into  cases
corresponding to hypotheses \ref{case:theta_regular}--\ref{case:less_subtle_case}.
In each case we will give a characterization of $\es^M$ and leave to the reader the verification 
of the precise degree of definability. Note that for the definability of the case specification, 
we use \ref{thm:strong_extender_in_sequence} to determine, for 
example,
whether or not $\theta$ is a cutpoint of $M$.

\begin{casefour} $\theta$ is regular in $M$.
 
By \ref{fact:Jensen_regular_lgst_proper_seg},
working in $M$, 
given any premouse $P$, we  have $P\pins M$
iff there is a sound premouse $Q$ and $n<\om$ such that
$P\pins Q$ and $\rho_{n+1}^Q=\theta$
and $M|\theta\ins Q$ and $Q$ satisfies $(n+1)$-condensation.
And $\es^M$ is the stack of all structures of the form $\Ss_m(P)$
for such $P$ and $m<\om$.
\end{casefour}

\begin{casefour} $\theta$ is a cutpoint of $M$.\footnote{
The case specification is $\Pi_2$ because $\theta$ is a cutpoint of $M$
iff for all $E,\her\in M$, if $M\sats$``$\her=\her_{\theta}^M$
and $E$ is a pre-extender with $\her\sub\Ult(M,E)$''
then $M\sats$``$E$ is not countably complete'';
if $\univ{M}$ is admissible then $\Pi_1$ suffices for the case specification,
because we can replace the requirement that $M\sats$``$E$ is not countably complete''
with the requirement that ``$M\sats\Ult(M|\kappa^{+M},E)$ is illfounded'',
where $\kappa=\crit(E)$.}
 
Use the proof of Corollary \ref{cor:L(pow(kappa))^M_AC}, or an  adaptation thereof if 
$M=\J(R)$ for some $R$,
combined with \ref{lem:countable_submodels} and \ref{lem:finer_countable_submodels}.
\end{casefour}
\begin{casefour}
 $M$ has no cutpoint in $[\theta,\OR^M)$.
 
 Use \ref{thm:proj_to_strength}.
\end{casefour}

\begin{casefour} $\cof^M(\theta)>\om$.

Let $P\in M$ and $n<\om$ be such that $P$ is a sound premouse, $M|\theta\ins P$,
$\rho_{n+1}^P=\theta$, and $P$ satisfies $(n+1)$-condensation. We claim that $P\pins M$; clearly 
this suffices. If $\theta$ is regular in $M$ we can use the proof of Case \ref{case:theta_regular},
so suppose otherwise; in particular, $\theta$ is a limit cardinal of $M$.

We prove that $P\pins M$ using a phalanx comparison.
Let $Q\pins M$ and $x\in Q$ and $m<\om$ be such that $P$ is $\rSigma_m^Q(\{x\})$;
in particular, $\OR^P\leq\OR^Q$. We must show that $P\ins Q$.
Suppose not; note that the fact that $P\nins Q$ is first-order over $Q$ (in the parameter $x$).
So we may assume that $x=\emptyset$ (increasing $m$ if needed).
Let $m+n+5<k<\om$ and let $\bar{Q}=\cHull_{k+1}^Q(\emptyset)$.
Then $\bar{Q}\pins M$. Let $\bar{P}$ be defined over $\bar{Q}$ as $P$ is over $Q$.
Let $\pi:\bar{Q}\to Q$ be the uncollapse, and $\pi(\bar{\theta})=\theta$.
Then $\bar{P}$ is $(n+1)$-sound and $\rho_{n+1}^{\bar{P}}=\bar{\theta}$,
$\bar{Q}$ is $\om$-sound and $\rho_{k+1}^{\bar{Q}}=\om$,
$\bar{P}|\bar{\theta}=\bar{Q}|\bar{\theta}$,  $\bar{\theta}$ is a cardinal of both models,
and $\bar{P}\nins\bar{Q}$.

Define the phalanx
$\ph=((\bar{Q},k,\bar{\theta}),(\bar{P},n),\bar{\theta})$.
By the following claim, a standard comparison argument (comparing $\ph$ with $\bar{Q}$) shows that 
$\bar{P}\ins\bar{Q}$,
  a contradiction, completing the proof.

\begin{clmfour} $\ph$ is $(\om_1+1)$-iterable.\end{clmfour}
\begin{proof}
Let $\sigma:\bar{P}\to P$ be $\pi\rest\bar{P}$.
Then $\sigma\rest\bar{\theta}=\pi\rest\bar{\theta}$, and by the choice of $k$, $\sigma$ is $\Sigma_{n+5}$-elementary, so
\begin{equation}\label{eqn:sigma_p_n+1-pres}\sigma\text{ is }p_{n+1}\text{-preserving and preserves }(n+1)\text{-solidity witnesses}.\end{equation}
Let $\eta=\sup\pi``\bar{\theta}$.
Then $\eta<\theta$ because $\cof^M(\theta)>\om$.
Because $\theta$ is a limit cardinal of $M$,
so is $\eta$.
Let
\[ P'=\cHull_{n+1}^P(\eta\un\{\pvec_{n+1}^P\}) \]
and $\pi':P'\to P$ be the uncollapse.

We claim $P'$ is $(n+1)$-sound, $\rho_{n+1}^{P'}=\eta$
and
$q\eqdef p^{P'}_{n+1}=(\pi')^{-1}(p_{n+1}^P)$.
For
\[ P'=\Hull^{P'}(\eta\cup\{q,\pvec_n^{P'}\}), \]
so $\rho_{n+1}^{P'}\leq\eta$. But $P'|\eta=M|\eta$ and $P'\in M$,
and as $\eta$ is an $M$-cardinal, therefore
$\rho_{n+1}^{P'}=\eta$
and $p_{n+1}^{P'}\leq q$. But
 by line (\ref{eqn:sigma_p_n+1-pres}), $(P',q)$ is $(n+1)$-solid (and $\pi'$ maps the $(n+1)$-solidity witnesses of $(P',q)$ to the $(n+1)$-solidity witnesses of $P$). Therefore $p_{n+1}^{P'}=q$ and $P'$ is $(n+1)$-sound, as desired.
 
So we can apply $(n+1)$-condensation to $\pi':P'\to P$ (by hypothesis on $P$), and note that it follows that $P'\pins 
M|\theta\ins Q\pins M$.

Let $\sigma':\bar{P}\to P'$ be the natural factor map. Then $\sigma'$ is a near $n$-embedding,
and $\sigma'\rest\bar{\theta}=\pi\rest\bar{\theta}$. Using $(\pi,\sigma')$, one can
lift normal trees on $\ph$ to normal trees on $Q$, completing the proof.
\end{proof}
\end{casefour}
\begin{casefour}$M\sats$``$\theta$ is not a limit of cardinals which are 
$\her_\theta$-strong''.

This is almost the same as the previous case.
Everything is identical until defining $\eta$.
 Set $\eta=\pi(\bar{\eta})$
where $\bar{\eta}$ is some
 $\bar{Q}$-cardinal $\bar{\eta}<\bar{\theta}$ such that $\bar{\eta}>\kappa$
for all $\her_\theta$-strong cardinals $\kappa$ of $M$, and the $(n+1)$-solidity witnesses of $\bar{P}$ are in $\Hull_{n+1}^{\bar{P}}(\bar{\eta}\cup\{\pvec_{n+1}^{\bar{P}}\})$. (Note we may again assume that $\theta$ is singular in $M$, and hence a limit cardinal of $M$ and $\bar{\theta}$ is a limit cardinal of $\bar{P}$.
We again get that $\eta$ is an $M$-cardinal,
though in this case it might not be a limit cardinal of $M$.)
We get $\sigma'\rest(\bar{\eta}+1)=\pi\rest(\bar{\eta}+1)$, which, by the choice of $\bar{\eta}$, suffices for iterability.
\end{casefour}

The remaining two cases are more subtle than the previous ones.
We (may) now make the:
\begin{ass}\label{ass:limit_cutpoint} $\theta$ is a singular cardinal of $M$ and $M$ has a cutpoint in 
$[\theta,\OR^M)$.\end{ass}
This must of course
be incorporated appropriately into the
$\Sigma_4(\{M|\theta\})$ (in case \ref{case:subtle_case})
and $\Sigma_3(\{M|\theta\})$ (in case \ref{case:less_subtle_case})
definitions one forms from the arguments to follow.
But given the definability $(\Sigma,\Lambda)$
established for cases \ref{case:theta_regular} and \ref{case:no_cutpoint},
this is no problem. (Note here that in case \ref{case:less_subtle_case},
$M$ \emph{does} have a cutpoint $\geq\theta$, so the $\Pi_3$
complexity of asserting the non-existence of a cutpoint is not
relevant in this case.)

\begin{casefour} $\cof^{\bfSigma_2^{\univ{M}}}(\OR^M)>\om$.

Work in $M$ and let $P$ be a premouse. Say that $P$ is \emph{good}
iff
 $P$ is sound,
 $M|\theta\ins P$ and
 $\rho_\om^P=\theta$.
Say that $P$ is \emph{excellent} iff
 $P$ is good,
 $M$ and $\Lp^M_{\Momone}(P)$ have the same universe, and
 $1$-condensation holds for every $Q\pins\Lp^M_{\Momone}(P)$.

By the case hypothesis,
$M$ has no largest proper segment,
so with Assumption \ref{ass:limit_cutpoint},
it follows that there are cofinally many excellent $N\pins M$.
Therefore it suffices to prove the following claim:

\begin{clmfive} Let $P,Q\in M$ be excellent. Then either $P\ins Q$ or $Q\ins P$.
\end{clmfive}
\begin{proof}
We may assume  $Q\pins M$ and $\OR^Q$ is a cutpoint of $M$, so $\Lp^M_{\Momone}(Q)=M$.
Define $\left<P_n,Q_n\right>_{n<\om}$ as follows.
Let $P_0=P$ and $Q_0=Q$. Given $P_n,Q_n$, let $Q_{n+1}$ be the least $N\pins M$ such that $N$ is 
good, $Q_n\pins N$ and $P_n\in N$.
Given $P_n,Q_{n+1}$, let $P_{n+1}$ be the least $R\pins\Lp^M_{\Momone}(P)$
such that $R$ is good, $P_n\pins R$ and $Q_{n+1}\in R$.

Let $\widetilde{P}=\stack_{n<\om}P_n$ and $\widetilde{Q}=\stack_{n<\om}Q_n$.
Note that 
$\widetilde{P}$ and $\widetilde{Q}$ have the same universe $U$
(but ostensibly may have different extender sequences).
We have $\OR^U<\OR^M$ by our case hypothesis, as 
$\left<P_n,Q_n\right>_{n<\om}$ is $\Sigma_2^{\univ{M}}(\{P,Q\})$.\footnote{
It seems that $\Sigma_1$ is not in general enough,
because to ensure that, for example, $P_n\pins\Lp^M_{\Momone}(P)$,
requires a $\all$-quantifier in order to deal with arbitrary countable
substructures of $P_n$; note that if $\cof^M(\theta)>\om$,
one can dispense with this quantifier, however,
as one can code the substructures via bounded subsets of $\theta$.}
Now $\widetilde{P}$ is definable over $U$ from the parameter $P$,
and likewise $\widetilde{Q}$ over $U$ from $Q$; in fact,
\[ \widetilde{P}=\Lp^U_{\Momone}(P)\text{ and }\widetilde{Q}=\Lp^U_{\Momone}(Q).\]
(Clearly cofinally many segments of $\widetilde{P}$ satisfy the requirements
for premice in $\Lp^U_{\Momone}(P)$; but if $R$ is some premouse satisfying
these requirements then working in $U$, we can run the same proof as before
to see that $R\pins\Lp^U_{\Momone}(P)$.)
Also, $U$ has largest cardinal $\theta$, so $\Lp^M_{\Momone}(P)|\OR^U$
and $M|\OR^U$ are both passive.
So letting $P^+=\J(\widetilde{P})$ and $Q^+=\J(\widetilde{Q})$, we have $P^+\pins\Lp^M_{\Momone}(P)$
and $Q^+\pins M$ and (because $\widetilde{P},\widetilde{Q}$ are definable from parameters over $U$),
\[ \univ{P^+}=\univ{\J(U)}=\univ{Q^+}. \]
Also because $\OR^U$ has cofinality $\om$,
definably over $U$ from parameters, we have
\[ \rho_1^{P^+}=\rho_\om^{\widetilde{P}}=\theta=\rho_\om^{\widetilde{Q}}=\rho_1^{Q^+}.\]

We claim that there is  
an $M$-cardinal $\gamma<\theta$ such that
\begin{equation}\label{eqn:hull_captures} 
H\eqdef\Hull_1^{P^+}(\gamma\un p_1^{P^+})\text{ has the same elements as 
}J\eqdef\Hull_1^{Q^+}(\gamma\un p_1^{Q^+}) \end{equation}
(``$\Hull$'' denotes the uncollapsed hull),
and the transitive collapses 
$\bar{P}^+,\bar{Q}^+$
are $1$-sound and such that $\rho_1^{\bar{P}^+}=\gamma=\rho_1^{\bar{Q}^+}$.
For recalling that $\theta$ is a limit cardinal of $M$, let $\gamma<\theta$ be an $M$-cardinal 
large enough
that, defining $H,J$ as above, we have
\[ \widetilde{P},\widetilde{Q},p_1^{P^+},p_1^{Q^+},w_1^{P^+},w_1^{Q^+}\in H\cap J\] 
(recall $w_1^{P^+},w_1^{Q^+}$ are the $1$-solidity witnesses for $P^+,Q^+$).
Then because $\gamma$ is an $M$-cardinal and $w_1^{P^+}\in H$,
we easily have that $\rho_1^{\bar{P}^+}=\gamma$ and $\bar{P}^+$ is $1$-sound, and likewise for 
$\bar{Q}^+$. And because
\[ \gamma\un\{\widetilde{Q},p_1^{Q^+}\}\sub H \]
and $P^+,Q^+$ have the same 
universe, we have $J\sub H$. Similarly $H\sub J$, giving line (\ref{eqn:hull_captures}).

By $1$-condensation for $P^+,Q^+$ (a requirement of excellence), and because $\rho_1^{\bar{P}^+}=\gamma=\rho_1^{\bar{Q}^+}$
is an $M$-cardinal, we have $\bar{P}^+\pins M$ and $\bar{Q}^+\pins M$.
By line (\ref{eqn:hull_captures}), $\OR^{\bar{P}^+}=\OR^{\bar{Q}^+}$. Therefore $\bar{P}^+=\bar{Q}^+$.
It easily follows that $\widetilde{P}=\widetilde{Q}$, giving the claim.
\end{proof}\renewcommand{\qedsymbol}{}
\end{casefour}

\begin{casefour}$\cof^{\bfSigma_1^{\univ{M}}}(\OR^M)>\om$ and $M$ is eventually constructible.

A simplification of the argument in the previous case shows that the collection of all $R\pins M$
such that $M=\J_\alpha(R)$ for some $\alpha>0$, is $\Pi_2^{\univ{M}}(\{M|\theta\})$.
Regarding the complexity of the case specification,
it is $\Sigma_3^{\univ{M}}$ to assert ``$M$ is eventually constructible'',
as it is equivalent to
\[ \exists x\ \all y\ \exists\beta\in\OR\ [y\in \Ss_\beta(x)] \]
($M$ fails to be eventually constructible iff $M$ is closed under sharps).
\end{casefour}

This completes all cases and hence, the proof of the theorem.
\end{proof}

\begin{dfn}
Let $M$ be a transitive structure.
Let $\Momone\in M$ be a premouse with $\univ{\Momone}=\HC^M$.
The \emph{inductive condensation stack of $M$ above $\Momone$}
is the stack of premice in $M$,
extending $\Momone$, satisfying the inductive definition used in the proof of \ref{thm:local_E_def_from_e}.
\end{dfn}

Of course, the inductive condensation stack  $S$ could have
$\OR^S<\OR^M$. But if $M$ is a $(0,\om_1+1)$-iterable tractable premouse
and $\mathfrak{m}=M|\om_1^M$ then $M=S$.

 \begin{rem}
In Case \ref{case:no_cutpoint} of the preceding proof,
it appeared that we used \ref{thm:proj_to_strength}
for extenders $E$ generated by $\theta\cup t$ for some finite set $t$ of generators
(in order that we can represent arbitrary segments $R\pins M|\theta^{+M}$).
Actually, it suffices to consider only extenders
$E$ such that $\nu_E=\theta$ (and $\her_\theta^M\sub\Ult(M,E)$ etc).
 For we claim that (under the case hypothesis) there are unboundedly many $\beta<\OR^M$
 such that $M|\beta$ is active with an extender $E$ such that $\nu_E=\theta$; clearly this suffices.

 For let $Q\pins M$ be such that $\rho_\om^Q=\theta$
 and let $\alpha$ be least such that $\alpha>\OR^Q$ and $M|\alpha$ is active with extender $F$
 and $\kappa=\crit(F)<\theta$. We claim that $\nu_F=\theta$.
 So suppose that $\theta<\nu_F$.
 Easily by the ISC, $\theta$ is the largest cardinal of $M|\alpha$.
 So $F$ is type 2. Let $E=F\rest\theta$,
 let $U_E=\Ult_0(M,E)$, $U_F=\Ult_0(M,F)$
 and $\pi:U_E\to U_F$ the standard factor map. So $\crit(\pi)$ is the least generator $\gamma$
 of $F$ with $\gamma\geq\theta$.
 
 Suppose $\gamma=\theta$.
 Then $\theta$ is a limit cardinal of $M$ and $U_E$,
 so
 $\pi(\theta)>\theta^{+U_F}=\lh(F)$.
 By the ISC, $\kappa$ is $\her_\xi$-strong in $U_E$ for each $\xi<\theta$.
 Therefore $\kappa$ is $\her_\xi$-strong in $U_F$ for each $\xi<\pi(\theta)$.
 But then  by the ISC, there are unboundedly many $\zeta<\theta^{+U_F}$
 indexing an extender $G$ with $\crit(G)=\kappa$, 
 and since $Q\pins U_F|\theta^{+U_F}$, this contradicts the minimality of $F$.
 
 So $\gamma>\theta$. Because
 $\theta^{+U_E}=\lh(E)<\lh(F)=\theta^{+U_F}$,
we have $\gamma=\lh(E)$ and $\pi(\gamma)=\lh(F)$.
 But $E\in\es^{U_F}$, so by reflection,
 there are unboundedly many $\xi<\lh(E)$ such that $M|\xi$ is active
 with an extender $G$ with $\crit(G)=\kappa$,
 and so the same holds of $\pi(\lh(E))=\lh(F)$,
 again contradicting the minimality of $F$.
 \end{rem}

\begin{rem}\label{rem:extra_case}
Let $M$ be passive, $(0,\om_1+1)$-iterable, satisfying ``$\om_1$ exists''
and $\theta=\lgcd(M)$. We sketch, in a further case,
the identification of $M$ from parameter $M|\theta$ over $\univ{M}$.
However, here we do not 
know whether the case specification itself is uniformly definable
over $\univ{M}$ as above. Say that $M$ 
is 
\emph{$\bfrSigma_1$-bounded}
 iff
$\Hull_1^M(\alpha\un\{x\})$
 is bounded in $\OR^M$ for every $\alpha<\rho_1^M$ and $x\in M$.
 Suppose that $M$ is $1$-sound and $\rho_1^M>\om$, and either
$M$ is eventually constructible or
$M$ is \emph{not} $\bfrSigma_1$-bounded.
Then $M$ is definable from $M|\theta$ over $\univ{M}$.

To see this, we argue much as in the last two cases of \ref{thm:local_E_def_from_e}.
We may make Assumption \ref{ass:limit_cutpoint}.
If $M$ is 
eventually constructible things are easier (using then either the argument
from Case \ref{case:less_subtle_case} of \ref{thm:local_E_def_from_e} if
$\cof^{\bfSigma_1^{\univ{M}}}(\OR^M)>\om$, or
a variant of the argument to follow otherwise),
so we leave this case to the reader, and suppose otherwise. So
$M$ is closed under sharps and has no largest proper segment.
The difference to Case \ref{case:subtle_case} of \ref{thm:local_E_def_from_e} is that now, when we 
define $\widetilde{P},\widetilde{Q}$,
we might have $\univ{M}=\univ{\widetilde{P}}=\univ{\widetilde{Q}}$.
Let $P\in M$ be good (\emph{good} defined as before).
Say that $P$ is \emph{outstanding} iff $P$ satisfies
the conditions of excellence from before, and letting $P^*=\Lp_{\Momone}^M(P)$,
then $P^*$ is $1$-sound, $\rho_1^{P^*}>\om$,
$P^*$ is not $\bfrSigma_1$-bounded, $1$-condensation holds for $P^*$,
and for all $R\ins P^*$, if
\[ \exists\kappa\ \big[\om<\rho\eqdef\rho_1^R=\kappa^{+R}\big] \] then for all sufficiently 
large $\gamma<\rho$,
\[ \cHull_1^R(\gamma\un p_1^{R})\text{ is }1\text{-sound}\]
(so $1$-condensation applies to the uncollapse map). By \ref{lem:hulls_proper_segs}, all 
sufficiently large good $Q\pins M$ are outstanding; we take $Q$ such.

Let $P\in M$ be outstanding. We claim  $\rho_1^{P^*}=\rho_1^{M}$.
For suppose $\rho_1^{P^*}<\rho_1^M$. Let $\alpha\in[\rho_1^{P^*},\rho_1^M)$ be large 
enough
that
\[ H\eqdef\Hull_1^M(\alpha\un\{p_1^M\}) \]
is unbounded in $\OR^M$ (using non-$\bfrSigma_1$-boundedness) 
and $P,p_1^{P^*}\in H$. Then $P'\in H$ for cofinally many $P'\pins P^*$.
For given $\eta_0,\eta_1\in H\cap\OR^M$ such that
there is a good $P'\pins P^*$ with $\eta_0\leq\OR^{P'}$ and $P'\in M|\eta_1$,
then the least good $P''\pins P^*$ such that $\eta_0\leq\OR^{P''}$, is in $H$.
(Recall that $\Lp_{\Momone}^M(P)$
is the stack of all good $Q$ such that $P\ins Q$,
$P$ is a cutpoint of $Q$ and all countable elementary substructures of $Q$ \emph{in $M$} have transitive collapse $\bar{Q}\in\css^{\Momone}$.
But in order to identify the desired $P''$,
it suffices to restrict attention to all countable elementary substructures of $P''$
in $M|(\eta_1+\om^2)$; recall here that $M$ is closed under sharps, so $\eta_1+\om^2<\OR^M$. This is because $P'\in M|\eta_1$, and we can run the argument which shows that $P'\ins P''$ or $P''\ins P'$ working in $M_1|(\eta_1+\om^2)$.) It 
follows that
\[ \Hull_1^{P^*}(\alpha\un\{p_1^{P^*}\})\sub H. \]
But $P^*$ is $1$-sound and $\univ{P^*}=\univ{M}$, so $M=H$,
contradicting the fact that 
$\alpha<\rho_1^M$. So $\rho_1^M\leq\rho_1^{P^*}$
and the converse is likewise.

The rest is much like the last part of the argument used in Case \ref{case:subtle_case},
but we might get $\widetilde{P}=P^*$ and $\widetilde{Q}=M$, in which case there is a wrinkle.
If this occurs, choose $\alpha<\rho_1^M=\rho_1^{P^*}$ such that
\[ P,Q\in\Hull_1^{P^*}(\alpha\un p_1^{P^*})\text{ has same elements as }\Hull_1^{M}(\alpha\un 
p_1^M) \]
by arguing as in the previous paragraph,
and such that the transitive collapses $\bar{P},\bar{Q}$ of the hulls are $1$-sound (using 
\ref{lem:hulls_proper_segs} and excellence if $\rho_1^M=\kappa^{+M}$).
Then by $1$-condensation we get $\bar{P}=\bar{Q}$, so $P=Q$.
\end{rem}

\begin{cor}\label{cor:V=HOD}
Let $M$ be a $(0,\om_1+1)$-iterable premouse satisfying either $\PS$ or $\ZFC^-+$``$\om_1$ exists''.
Suppose that either:
\begin{enumerate}[label=\tu{(}\roman*\tu{)}]
\item\label{item:M|om_1^M_iterable} $\Momone^M$ is $(\om,\om_1+1)$-iterable in $M$,\footnote{If 
$\OR^M=\om_2^M$ then this statement should be interpreted as ``There is an $(\om,\om_1)$-strategy 
$\Sigma$ for $M|\om_1^M$ such that for every tree $\Tt$ via $\Sigma$ of length $\om_1$,
there is a $\Tt$-cofinal branch''.} or
\item\label{item:M|om_1^M_constructed} $\Momone^M$  is built by the\footnote{Here
one can naturally impose various other restrictions on the construction,
but it should be uniquely specified somehow.} maximal fully backgrounded 
$L[\es]$-construction of $M$ using background extenders $E\in\es^M$ such that $\nu_E$ is an 
$M$-cardinal.
\end{enumerate}
Then:
\begin{enumerate}
 \item\label{item:es^M_def}  $\es^M$ is definable over 
$\univ{M}$ without parameters,
\item\label{item:M_is_Hull_of_OR} $\univ{M}=\Hull^{\univ{M}}(\OR^M)$, and
 \item\label{item:M_is_OD^M}  if assumption \ref{item:M|om_1^M_iterable} holds and $M\sats\PS$ 
then $\univ{M}=\OD^{\univ{M}}$.\footnote{Recall that
we define $\OD^{\univ{M}}$ as the collection of all $x\in M$
such that $\{x\}$ is definable from ordinal parameters over $\her_\alpha^M$,
for some $\alpha<\OR^M$. So $\OD^{\univ{M}}\sub\Hull^{\univ{M}}(\OR^M)$.}
\end{enumerate}

\end{cor}
\begin{proof}
If assumption \ref{item:M|om_1^M_iterable} holds,
then all three conclusions follow easily from \ref{thm:local_E_def_from_e}.

If assumption \ref{item:M|om_1^M_constructed} holds, by \ref{thm:strong_extender_in_sequence},
if $E\in M$ then 
[$E\in\es^M$ and $\nu_E$  is an $M$-cardinal]
  iff $M\sats\text{``}E$ is a countably complete 
extender, $\nu_E$ is a cardinal and $\her_{\nu_E}\sub\Ult(V,E)$''.
So the 
$L[\es]$-construction using these background extenders is definable over $\univ{M}$ without 
parameters, so $\{\Momone^M\}$ is likewise definable,
so conclusions \ref{item:es^M_def} and \ref{item:M_is_Hull_of_OR}
follow easily from \ref{thm:local_E_def_from_e}.\footnote{Maybe
the $L[\es]$ construction takes $\OR^M$ stages to construct
$\Momone^M$, in which case it's not clear
that the conclusion in clause \ref{item:M_is_OD^M} holds.}
\end{proof}

Recall that $M_\wlim$ is the least proper class mouse with a Woodin limit of Woodins.
Part \ref{item:M|om_1^M_constructed} of the previous corollary gives:
\begin{cor} $\univ{M_\wlim}\sats$``$V=\HOD$''.\end{cor}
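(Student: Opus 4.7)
The plan is to apply clause \ref{item:M|om_1^M_constructed} of Corollary \ref{cor:V=HOD} with $M=M_\wlim$. Since $M_\wlim\sats\ZFC$, it suffices to verify that $\Momone^{M_\wlim}$ is the output, up to stage $\aleph_1^{M_\wlim}$, of the maximal fully backgrounded $L[\es]$-construction of $M_\wlim$ using background extenders $E\in\es^{M_\wlim}$ with $\nu_E$ an $M_\wlim$-cardinal. The conclusion $\univ{M_\wlim}\sats V=\HOD$ then follows immediately.

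First I would run this construction inside $M_\wlim$ to obtain a proper class premouse $N$. Convergence is not an issue: because $M_\wlim\sats$``there is a Woodin limit of Woodin cardinals'', there are cofinally many $M_\wlim$-cardinals $\nu$ carrying some $E\in\es^{M_\wlim}$ with $\nu_E=\nu$, by Corollary \ref{cor:seqdef} applied inside $M_\wlim$, so backgrounds are abundant at all heights. Iterability of $N$ in $V$ follows by lifting $(\om,\om_1+1)$-trees on $N$ to trees on $M_\wlim$ via the backgrounds, together with the iterability of $M_\wlim$. Moreover, reflection shows $N\sats$``there is a Woodin limit of Woodin cardinals'': restrictions of the witnessing extenders from $\es^{M_\wlim}$ appear on $\es^N$ and carry out the same role.

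Thus $N$ is an iterable proper class mouse satisfying the defining property of $M_\wlim$. By the minimality of $M_\wlim$, a standard comparison of $N$ with $M_\wlim$ cannot yield a proper iterate on either side: such an iterate would still carry a Woodin limit of Woodins, and cutting either mouse off strictly below the other would violate this. Hence $\es^N$ and $\es^{M_\wlim}$ agree at least up to $\aleph_1^{M_\wlim}$, giving $\Momone^{M_\wlim}=N|\aleph_1^{M_\wlim}$, as required. The main obstacle lies exactly in this comparison/minimality step: one must rule out that the construction somehow misses, or adds, an extender below $\aleph_1^{M_\wlim}$. This is where Theorem \ref{thm:strong_extender_in_sequence} is indispensable --- it forces any backgrounded extender on $\es^N$ to lie on $\es^{M_\wlim}$, and dually, the maximality of the construction ensures each $E\in\es^{M_\wlim}$ with $\nu_E$ an $M_\wlim$-cardinal is eventually selected.
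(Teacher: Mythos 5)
Your overall plan---apply Corollary \ref{cor:V=HOD}\eqref{item:M|om_1^M_constructed} to $M=M_\wlim$ and verify that $\Momone^{M_\wlim}$ is indeed the output of the fully backgrounded construction up to $\aleph_1^{M_\wlim}$---matches the intent of the paper, which simply asserts this corollary as an instance of \ref{cor:V=HOD}\eqref{item:M|om_1^M_constructed} and gives no explicit verification. So you are attempting to fill a gap the paper leaves implicit, which is reasonable. Your setup (run the construction, lift trees to $M_\wlim$ for iterability, preserve the Woodin limit of Woodins into $N$) is standard and fine.

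However, the crucial step---the ``minimality/comparison'' argument showing $N$ and $M_\wlim$ agree up to $\aleph_1^{M_\wlim}$---is where your sketch is not rigorous and does not quite make sense as written. First, ``cutting either mouse off strictly below the other'' is not an operation that arises in comparison of two proper class premice; comparison produces a common iterate, and the argument that neither side moves must come from universality/Dodd--Jensen considerations (or from agreeing sharps $N^\#$, $M_\wlim^\#$, using the uniqueness of $M_\wlim^\#$ as the minimal active $\wlim$-small iterable sound mouse), not from some vague ``violation.'' Second, your final paragraph misapplies Theorem \ref{thm:strong_extender_in_sequence}: that theorem concerns extenders $E$ that are total over $M_\wlim$ with $\her_{\nu_E}^{M_\wlim}\sub\Ult(M_\wlim,E)$, whereas the extenders on $\es^N$ are extenders over the construction model $N$, not over $M_\wlim$, and need not satisfy those hypotheses; what \ref{thm:strong_extender_in_sequence} actually underlies is the parameter-free definability of the \emph{class of backgrounds} (which is how it is used in the proof of \ref{cor:V=HOD}\eqref{item:M|om_1^M_constructed}), not a statement that $\es^N\sub\es^{M_\wlim}$. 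Third, the claim that ``maximality ensures each $E\in\es^{M_\wlim}$ with $\nu_E$ an $M_\wlim$-cardinal is eventually selected'' is not automatic: at a given stage, the construction model need not coincide with the corresponding initial segment of $M_\wlim$, so even with a background available, the extender added need not be the one sitting on $\es^{M_\wlim}$; moreover, many $E\in\es^{M_\wlim}$ have $\nu_E$ not a cardinal, so this alone would not suffice anyway. The substance of the verification is precisely to show none of these failures occur, and that requires a genuine universality argument (comparing $N$ and $M_\wlim$ in $V$, invoking minimality of $M_\wlim^\#$ and Dodd--Jensen), rather than a restatement that the obstacle exists.
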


There are of course many variants of this corollary. Using the background construction of \cite{premouse_inheriting}
in place of the background construction used above,
one gets that $\univ{M}\sats$``$V=\HOD$''
where $M$ is, for example, the least proper class mouse with a $\lambda$
which is a limit of Woodins and strong cardinals.

\section{Direct condensation stack in $M[G]$}\label{sec:direct_con_stack}

In this section we prove the following theorem, using
a variant of the inductive condensation stack:
\begin{tm}\label{thm:easy_con}
 Let $M$ be a $(0,\om_1+1)$-iterable premouse satisfying $\PS$.
 Let $\theta<\OR^M$ be a regular cardinal of $M$
 and $\PP\in M|\theta$
 be a poset. Let $G$ be $(M,\PP)$-generic.
 Then $\es^M$ is definable over  $M[G]$
 from the parameter $M|\theta$.
\end{tm}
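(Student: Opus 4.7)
My plan is to adapt the inductive condensation stack argument of \S\ref{sec:con_stack} to the generic extension $M[G]$, with $M|\theta$ playing the role of the base parameter. The key observation is that $\PP$ is small relative to $\theta$, so the forcing changes nothing below $\theta$ and very little above $\theta$ that is relevant to the condensation-based characterization.

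First, since $\PP\in M|\theta$ we have $|\PP|^M<\theta$, so $\PP$ is $\theta$-c.c.\ in $M$; hence $\theta$ remains a regular cardinal in $M[G]$, $(M|\theta)^{M[G]}=M|\theta$, $\OR^{M[G]}=\OR^M$, and $\PS$ continues to hold of $\univ{M[G]}$. Moreover $\Momone^M=M|\om_1^M$ is recoverable from $M|\theta$ in $M[G]$, since $\om_1^M=\om_1^{M|\theta}$ is absolute. I then run, inside $M[G]$, the case-by-case inductive condensation stack definition from the proof of Theorem~\ref{thm:local_E_def_from_e}, letting $\mathcal{S}$ denote the resulting class of premice; this is definable over $\univ{M[G]}$ from the parameter $M|\theta$, with the same type-complexity given by Theorem~\ref{thm:local_E_def_from_e} part~\ref{item:ZFC}. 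The forward inclusion $\es^M\subseteq\bigcup\{\es^P:P\in\mathcal{S}\}$ is immediate: any $P\pins M$ inherits $(0,\om_1+1)$-iterability, hence $\om$-condensation by \cite[Theorem 4.2***]{premouse_inheriting}, and lies in $M\subseteq M[G]$, so satisfies the defining conditions.

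The main content, and main obstacle, is the reverse inclusion: that $\mathcal{S}$ acquires no new members from $M[G]\setminus M$. At each inductive stage, this reduces to the following claim. Given a segment $N\pins M$ with $M|\theta\pins N$ that has already been recovered, and a sound premouse $P\in M[G]$ with $N\pins P$ satisfying the local condensation requirements, and with $\rho_\om^P$ a regular cardinal of $M$ (hence of $M[G]$), then $P\pins M$. To prove this I pick $Q\pins M$ with $N\pins Q$, $\OR^Q>\OR^P$, and $\rho_\om^Q$ matched appropriately; such $Q$ exist because $\OR^{M[G]}=\OR^M$ and proper segments of $M$ of the required shape are cofinal in $\OR^M$, and $Q$ automatically satisfies $\om$-condensation by iterability of $M$. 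I then apply Fact~\ref{fact:Jensen_regular_in_V} to the pair $(P,Q)$ with base $N$ and height $\rho_\om^P$, obtaining $P\ins Q$ or $Q\ins P$; the latter is ruled out by the choice of $Q$, so $P\pins Q\pins M$. The delicate point is that Fact~\ref{fact:Jensen_regular_in_V}'s hull argument is nominally one in $V$ and wants the height regular in $V$; since we only have regularity in $M[G]$, I instead run the hull inside $M[G]$ (using that $P,Q,M|\theta$ all lie in $M[G]$ and $\rho_\om^P$ is regular there), after which $\om$-condensation applied to $P$ and to $Q$ places the collapsed hulls $\bar P,\bar Q$ as initial segments of $M$ or of an ultrapower $\Ult(M|\bar\tau,F^{M|\bar\tau})$ of a segment of $M$; comparability of $\bar P,\bar Q$ there is then lifted through the elementary uncollapse maps to give comparability of $P$ and $Q$, completing the argument.
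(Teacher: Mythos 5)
Your plan diverges from the paper's proof of this theorem in a way that leaves a genuine gap. The paper's \S\ref{sec:direct_con_stack} argument does \emph{not} rerun the inductive condensation stack of \S\ref{sec:con_stack} inside $M[G]$; it uses a different and more self-contained scheme. For each $M$-regular $\eta\geq\theta$, it directly isolates $M|\eta$ as the unique ``excellent'' premouse $P$ of height $\eta$ extending $M|\theta$. Crucially, excellence includes the condition that the Jensen stack $P^+$ over $P$ admits a generic $h$ for some $\QQ\in P|\theta$ with $P^+[h]$ having universe $\her_{\eta^+}^{M[G]}$. That clause is what ties the candidate $P$ to the actual $M$; uniqueness is then established by a back-and-forth weave $\left<N_n,R_n\right>_{n<\om}$ between the Jensen stacks of two candidates, using the mutual generics to show $N_\om[g]$ and $R_\om[h]$ share a universe, followed by a careful simultaneous hull argument (a variant of Lemma~\ref{lem:sound_hull}). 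None of this appears in your proposal.

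The concrete gap is that your reverse inclusion argument does not actually reach all of $M$. Fact~\ref{fact:Jensen_regular_in_V} compares two premice $P,Q$ that extend the \emph{same} base $N$ of regular height $\kappa$, with $\rho_\om^P=\rho_\om^Q=\kappa$; it gives nothing when $\kappa$ is singular. Your induction steps from a recovered segment $N$ to $(\OR^N)^+$, and you restrict to $\rho_\om^P$ regular, so your ladder gets stuck the first time it reaches a singular cardinal (e.g.\ at $\theta^{+\om}$), since premice in the Jensen stack over $M|\mu$ for singular $\mu$ project to $\mu$, not to a regular. The paper's ``it suffices to treat regular $\eta$'' move works precisely because it defines $M|\eta$ \emph{from $M|\theta$ alone} for regular $\eta$, not inductively from its cardinal predecessor, and the generic clause is what rules out candidates that agree with $M$ only up to $\theta$ but diverge between $\theta$ and $\eta$; without that clause Fact~\ref{fact:Jensen_regular_in_V} is not applicable (the common base is $M|\theta$, whose height is $\theta<\eta$, not $\eta$). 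Separately, your claim that the case-by-case definition of Theorem~\ref{thm:local_E_def_from_e} transfers to $M[G]$ with the same complexity is not justified: the hard cases (especially Cases~\ref{case:cof(theta)>om}--\ref{case:less_subtle_case}) use phalanx comparisons and $\Lp$ computations inside the premouse $M$, and the case specifications themselves (e.g.\ whether $\theta$ is a cutpoint, via ``countably complete'' extenders) are not obviously absolute between $\univ{M}$ and $M[G]$.
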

\begin{proof}
 Work in $M[G]$.
 It suffices to give a definition of $M|\eta^{+M}$
 from the parameter $M|\theta$, uniformly in $M$-regular cardinals
 $\eta\geq\theta$.
 Note that the Jensen stack over $M|\eta$ is exactly $M|\eta^+$,
 and this structure satisfies standard condensation facts. (But if $\eta>\theta$, we don't have the parameter $M|\eta$ available to refer to.)

Say that a premouse $P$ is \emph{excellent} iff
$M|\theta\ins P$, $\OR^P=\eta$, the Jensen stack $P^+$
over $P$ has height $\eta^+$, $P^+$ satisfies standard condensation facts,
and there is $\QQ\in P|\theta$ and a $(P,\QQ)$-generic filter $h$
such that $P^+[h]$ has universe $\her_{\eta^+}$.

Clearly the following claim completes the proof:

\begin{clmsix}$M|\eta$ is the unique excellent premouse.\end{clmsix}

\begin{proof}
Clearly $N=M|\eta$ is excellent,
as witnessed by $\PP,g$.

So let $R$ also be excellent, as witnessed by $\QQ,h$.
Define a sequence $\left<N_n,R_n\right>_{n<\om}$
as follows. Let $N_0=N$ and $R_0=R$.
Given $N_n,R_n$, let $N_{n+1}$ be the least $N'$
such that $N_n\pins N'\pins N^+$
and $\rho_\om^{N'}=\eta$ and $R_n,h\in N'[g]$;
then let $R_{n+1}$ be the least $R'$ such that $R_n\pins R'\pins R^+$
and $\rho_\om^{R'}=\eta$ and $N_{n+1},g\in R'[h]$.
Let $N_\om=\stack_{n<\om}N_n$ and $\widetilde{N}=\J(N_\om)$,
and $R_\om,\widetilde{R}$ likewise. Then
$\widetilde{N}\pins N^+$ and $\widetilde{R}\pins R^+$.
Note that
$N_\om[g]$ and $R_\om[h]$ have the same universe $U$,
and $N_\om,R_\om$ are both definable from parameters
over $U$ (via the Jensen stack). Hence, $\J(N_\om[g])$
and $\J(R_\om[h])$ and $\widetilde{N}[g]$ and $\widetilde{R}[h]$
all have the same universe $\widetilde{U}=\J(U)$.

Now $\widetilde{N},\widetilde{R}$ both satisfy
standard $1$-condensation facts.
Let $\gamma<\theta$ be a cardinal of the models $N,N[g],R[h],R$
such that $\PP,\QQ$ have cardinality
$\leq\gamma$ in $N,R$ respectively.

\begin{sclmsix}
For all $x\in\widetilde{U}$
there is $q\in[\OR^{\widetilde{U}}]^{<\om}$
such that the hulls $H,H',J,J'$ all contain the same ordinals, where
\[ H=\Hull_1^{\widetilde{N}}(\gamma\cup\{q\})
 \text{ and }J=\Hull_1^{\widetilde{R}}(\gamma\cup\{q\}),\]
\[ H'=\Hull_1^{\widetilde{U}}(\gamma\cup\{q,N_\om,g\})
=\Hull_1^{\widetilde{U}}(\gamma\cup\{q,R_\om,h\})=J',\]
and moreover, $\PP\in H$, $\QQ\in J$, $x\in H'=J'$,
and the transitive collapses $C,D$ of $H,J$
respectively are sound.\end{sclmsix}

Assuming the subclaim, let $\pi:C\to H$ and $\sigma:D\to J$ be the uncollapses.
Then by $1$-condensation, $C\pins N|\theta$
and $D\pins R|\theta$, and hence $C=D$ (as $N|\theta=R|\theta$ and $\OR^C=\OR^D$),
and $\pi\rest\OR=\sigma\rest\OR$.
But then $\widetilde{N}=\widetilde{R}$ and $N=R$, as desired.

\begin{proof}[Proof of Subclaim] Use a simple variant of the proof of \ref{lem:sound_hull}
to choose $q$, running an algorithm much as there, but simultaneously
for both models $\widetilde{N},\widetilde{R}$,
and using the ``$\Sigma_1$-definability of the $\Sigma_1$-forcing relation''
to see that $H,H'$ contain the same ordinals (and likewise $J,J'$),
and choosing elements of $q$ large enough to ensure
that $H'=J'$ and $\PP\in H$, etc.

Here are some more details: Given $q\rest i$
and $\gamma_i$ much as in the proof of \ref{lem:sound_hull},
first select some $q'_i$ satisfying the requirements
much as before with respect to $\widetilde{N}$ (hence
with $\gamma_i<q'_i<(\gamma_i^+)^{\widetilde{U}}$),
and with $q'_i$ large enough that
$\PP\cup\{\PP\}\sub$ the relevant hulls 
of $\widetilde{N}$ (note this condition holds
trivially unless $\gamma_i<\theta$) 
and $x,N_\om,R_\om,g,h$ are in the relevant hulls of $\widetilde{U}$.
Then choose $q_i$ with $q'_i<q_i<(\gamma_i^+)^{\widetilde{U}}$
and much as before with respect to $\widetilde{R}$.
In this manner it is easy to arrange that $q_i$ works.  We leave the rest to the reader.

This completes the proof of the subclaim, claim and theorem.
\end{proof}
\renewcommand{\qedsymbol}{}
\end{proof}
\renewcommand{\qedsymbol}{}
\end{proof}

\begin{dfn}\label{dfn:fast_con_stack}
 Let $M$ be a transitive structure satisfying $\PS$.
 Work in $M$.
 Let $P$ be a premouse with $\OR^P$ regular.
 For a regular cardinal $\eta\geq\OR^P$, define \emph{$\eta$-excellent premice}
 (relative to $P,\eta$) as in the proof above
  (there we have $P=M|\theta$).
 The \emph{direct condensation stack
 of $M$ above $P$} is the stack $S$ of all $\eta$-excellent premice,
 for all such $\eta$, as far as this is a well-defined stack.
\end{dfn}

\begin{rem}
As a special case of the previous theorem,
we get a shorter proof that if a mouse $M$ satisfies $\PS$,
then $\es^M$ is definable over $\univ{M}$ from the parameter $M|\om_1^M$.
Note that the proof also easily adapts to the case that $M$
has a largest cardinal $\lambda$, assuming that $\lambda$ is $M$-regular.
However, for the singular case (most importantly $\cof^M(\lambda)=\om$)
we need the earlier methods.
\end{rem}

\section{A simplified fine structure}\label{sec:fs}

In \cite{fsit}, Mitchell-Steel fine structure is introduced,
which makes use of the parameters $u_n$.
We introduce a simplified fine structure here which avoids
the parameters $u_n$, and show that in fact,
the two fine structures are equivalent: we get the same notions of soundness, the same projecta and standard parameters, etc.

\begin{dfn}\label{dfn:Hull_k+1}
Let $N$ be a premouse. Given $X\sub N$, $\Hull_{k+1}^N(X)$ denotes
the substructure of $N$ whose elements are
those $z\in N$ such
that there is $\xvec\in X^{<\om}$ and an $\rSigma_{k+1}$ formula
$\varphi$ such that $z$ is the unique $z'\in N$ such that
$N\sats\varphi(\xvec,z')$. And $\cHull_{k+1}^N(X)$ denotes its transitive collapse,
assuming this is well-defined.
Also let $\Th_{k+1}^N(X)$ be the
$\rSigma_{k+1}$ theory\footnote{That is, the pure theory, in the language of
\cite{fsit}.} of $N$ in parameters in $X$.
\end{dfn}

\begin{dfn}[Minimal Skolem terms]\label{dfn:minterm} Let
$\varphi$ be an
$\rSigma_{k+1}$ formula of
$n+1<\om$ free variables $v_0,\ldots,v_n$. The \emph{minimal Skolem term associated
to $\varphi$} is denoted $\minterm_\varphi$, and
has $n$ variables $v_1,\ldots,v_n$.

Let $R$ be a $k$-sound premouse with $\rho_k^R>\om$. Let
$q\in[\rho_0^R]^{<\om}$ such that 
(i) if $k>0$ then
$R=\Hull_k^R(\rho^R_k\un\{q\})$
and (ii) if $q\neq\emptyset$ then $\rho^R_k\leq\min(q)$.
We define the
partial function
\[ \minterm_{\varphi,q}^R:\core_0(R)^n\to\core_0(R). \]
If $k=0$ then $\minterm_{\varphi,q}^R$ is just the usual
Skolem function associated to $\varphi$ (such that the graph of
$\minterm_\varphi^R$ is uniformly $\rSigma_1^R$), with inputs substituting for $v_1,\ldots,v_n$ and output for $v_0$. (Note $q=\emptyset$ in this
case.)

Suppose $k>0$. Let
$\xvec=(x_1,\ldots,x_n)\in\core_0(R)^n$. If $\core_0(R)\sats\neg\ex v_0\ \varphi(v_0,\xvec)$, then
$\minterm_{\varphi,q}^R(\xvec)$ is undefined.

Suppose $\core_0(R)\sats\ex
v_0\ \varphi(v_0,\xvec)$. Let $\tau_\varphi$ be the basic Skolem term associated
to $\varphi$; see \cite[2.3.3]{fsit}.
Recall that $\tau_\varphi^R(\vec{x})$
is the $<_R$-least $y$ such that $\core_0(R)\sats\varphi(y,\vec{x})$. For $\beta<\rho_k^R$, let
$(\tau_\varphi)^\beta$ be defined over $R$ as in the proof of \cite[2.10]{fsit},
with $q$ specified above. Let $\beta_0$ be the
least $\beta$ such that
$(\tau_\varphi)^\beta(\xvec)$ is defined. Then define
\[ \minterm_{\varphi,q}^R(\xvec)=(\tau_\varphi)^{\beta_0}(\xvec).\qedhere \]
\end{dfn}

\begin{lem}\label{lem:comp}
The graph of $\minterm_{\varphi,q}^R$ is
$\rSigma_{k+1}^R(\{q\})$, recursively uniformly in $\varphi,R,q$ \tu{(}for $R,q$ as in
\ref{dfn:minterm}\tu{)}.

Given $\rSigma_{k+1}$ formulas
$\varphi,\psi_0,\ldots,\psi_{n-1}$, with $\varphi$ of $n$ free
variables and $\psi_i$ of $n_i+1$ free variables, the relation
over $\core_0(R)$,
\begin{multline*}\core_0(R)\sats\varrho(\xvec_0,\ldots,\xvec_{n-1})\iff \\
\big(\all i[\xvec_i\in\dom(\minterm^R_{\psi_i,q})]\big)\wedge \core_0(R)\sats\varphi(\minterm_{\psi_0,q}(\xvec_0),\ldots,\minterm_{\psi_{n-1},q}(\xvec_{n-1}
)),
\end{multline*}
is $\rSigma_{k+1}(\{q\})$, uniformly in $R,q$ as in \ref{dfn:minterm}, and
moreover, there is a recursive
function passing from $\varphi,\psi_0,\ldots,\psi_{n-1}$ to an
$\rSigma_{k+1}$ formula for $\varrho$.

Therefore minimal Skolem terms are effectively closed under composition.
That is, for example, there is a recursive function passing from formulas $\varphi,\psi$, each of two free variables $v_0,v_1$, to
$\varrho$, of the same free variables,
such that for all relevant
$R,q$, we have
\[ \minterm_{\varrho,q}^R=\minterm_{\varphi,q}^R\com\minterm_{\psi,q}^R.\]
Likewise for compositions involving larger numbers of variables.
\end{lem}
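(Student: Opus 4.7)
The plan is to prove the three assertions in order, each reducing to fine-structural facts about the bounded Skolem functions $(\tau_\varphi)^\beta$ from \cite[Lemma 2.10]{fsit}. For the first assertion, I would argue by cases. When $k=0$, we have $q=\emptyset$ and $\minterm_{\varphi,\emptyset}^R$ is literally the usual $\rSigma_1$ Skolem function for $\varphi$, whose graph is $\rSigma_1$ by construction, uniformly recursively in $\varphi$. When $k>0$, I would write
\[
y=\minterm_{\varphi,q}^R(\xvec)\ \iff\ \ex\beta<\rho_k^R\Bigl[(\tau_\varphi)^\beta(\xvec){=}y\ \wedge\ \all\beta'<\beta\ (\tau_\varphi)^{\beta'}(\xvec)\text{ undefined}\Bigr].
\]
By the construction of $(\tau_\varphi)^\beta$ in \cite[2.10]{fsit}, for each fixed $\beta<\rho_k^R$ the predicate ``$(\tau_\varphi)^\beta(\xvec)=y$'' (and likewise ``$(\tau_\varphi)^\beta(\xvec)$ is undefined'') is uniformly $\rSigma_k(\{q,\beta\})\cup\rPi_k(\{q,\beta\})$ over $R$; equivalently, it is $\Delta_0$ over the $k$-th reduct of $R$ in parameters $q,\beta$. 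The bounded universal quantifier $\all\beta'<\beta$ preserves this, and the outer $\ex\beta<\rho_k^R$ then puts the whole definition into $\rSigma_{k+1}(\{q\})$, uniformly recursively in $\varphi$.

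For the second assertion, I would unfold the composed relation. Using part (1) and $\rSigma_{k+1}$ formulas $\sigma_i(\xvec_i,y_i)$ defining the graphs of $\minterm_{\psi_i,q}^R$, and writing $\varphi(y_0,\ldots,y_{n-1})$ for the outer formula, one has
\[
\core_0(R)\sats\varrho(\xvec_0,\ldots,\xvec_{n-1})\ \iff\ \ex y_0\ldots\ex y_{n-1}\Bigl[\bigwedge_{i<n}\sigma_i(\xvec_i,y_i)\ \wedge\ \varphi(y_0,\ldots,y_{n-1})\Bigr].
\]
Since $\rSigma_{k+1}$ is closed under finite conjunction, disjunction, bounded quantification, and unbounded existential quantification (as can be checked from the definition via the $k$-th reduct), this compound formula is again $\rSigma_{k+1}(\{q\})$, and a formula for it can be computed recursively from $\varphi$ and the $\psi_i$. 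Uniformity in $R,q$ follows because the formulas for the graphs given by (1) do not depend on $R,q$.

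The third assertion is then immediate from the first two. Given $\varphi,\psi$ of the appropriate arities, apply the second assertion (with $n=1$) to produce an $\rSigma_{k+1}$ formula $\varrho(\xvec,z)$ equivalent (over any legitimate $R,q$) to the graph of $\minterm_{\varphi,q}^R\compose\minterm_{\psi,q}^R$. Because this graph is functional, $\core_0(R)\sats\ex z\,\varrho(\xvec,z)$ holds at a given $\xvec$ precisely when the composition is defined, and the unique such $z$ is the value of the composition; hence by the definition of minimal Skolem terms, $\minterm_{\varrho,q}^R(\xvec)$ equals that unique $z$. The map $(\varphi,\psi)\mapsto\varrho$ is recursive because the construction in the second assertion is.

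\textbf{Main obstacle.} The only delicate point is the fine-structural bookkeeping in the first assertion: verifying that the predicate ``$(\tau_\varphi)^\beta(\xvec)=y$'' genuinely has the claimed uniform complexity, together with the (routine but tedious) check that minimality of $\beta$ does not raise the complexity. Once this is pinned down using \cite[2.10]{fsit}, the remainder is a straightforward application of the closure properties of $\rSigma_{k+1}$.
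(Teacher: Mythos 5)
The paper states Lemma \ref{lem:comp} with no proof; the author treats it as a routine consequence of \cite[2.10]{fsit} and the closure properties of $\rSigma_{k+1}$. So there is no paper proof to compare against, and the question is simply whether your reconstruction is sound. It is, in outline, the argument one would expect: the graph of $\minterm_{\varphi,q}^R$ is characterized by the displayed formula with an unbounded $\ex\beta<\rho_k^R$ followed by an existence-and-minimality clause about the bounded Skolem functions $(\tau_\varphi)^\beta$; the inner clause is $\Delta_0$ over the $k$-th reduct with parameters $q,\beta$ (this is the content of \cite[2.10]{fsit}), the bounded universal quantifier over $\beta'<\beta$ stays $\Delta_0$ there, and the outer $\ex\beta$ makes the whole thing $\Sigma_1$ over the reduct, i.e.\ $\rSigma_{k+1}(\{q\})$, recursively in $\varphi$. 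Parts two and three then follow by substitution, closure of $\rSigma_{k+1}$ under $\wedge$ and $\ex$, and functionality of the composed graph. You correctly flag the only real content as the complexity of the predicates ``$(\tau_\varphi)^\beta(\xvec)=y$'' and ``$(\tau_\varphi)^\beta(\xvec)$ undefined,'' which must be read off from the recursion in \cite[2.10]{fsit}.

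Two minor points worth tightening. First, the phrase ``$\rSigma_k(\{q,\beta\})\cup\rPi_k(\{q,\beta\})$'' is not meaningful as a complexity class for a single predicate; what you want, and what you say immediately after, is that the predicate is $\Delta_0$ over the $k$-th reduct in parameters $q,\beta$ (equivalently, $\rDelta_{k+1}(\{q,\beta\})$ with witnesses that are uniformly bounded by $\beta$). Lead with the reduct formulation. Second, in part three you should note explicitly that, when defined, $(\tau_{\varrho})^{\beta_0}(\xvec)$ is a genuine witness to $\varrho(\xvec,\cdot)$; together with functionality of the composed graph this forces $\minterm_{\varrho,q}^R(\xvec)$ to equal the composition's value, which is the step that turns ``same graph'' into ``same partial function.'' You gesture at this but it deserves to be stated, since the paper's definition of $\minterm$ makes the value depend on $\beta_0$ and not on a least-witness selection.
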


Note that the \emph{basic Skolem terms}
referred to in the following lemma were
recalled in Definition \ref{dfn:minterm}.

\begin{lem}\label{lem:minterm} 
Let $R,q$ be as in \ref{dfn:minterm}, $X\sub\core_0(R)$ and
\begin{eqnarray*}
H_1&=&\Hull_{k+1}^R(X\un\{q\}),\\
H_2&=&\big\{\minterm^R_{\varphi,q}(\xvec)\bigm|\varphi\text{ is }\rSigma_{k+1}\
\&\ \xvec\in X^{<\om}\big\},\\
H_3&=&\text{ the closure of }X\un\{q\}\text{ under the basic }
\rSigma_{k+1}\text{-Skolem terms.}
\end{eqnarray*}
Then $H_1=H_2=H_3$.
\end{lem}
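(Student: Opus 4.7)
I plan to establish the cycle $H_3 \subseteq H_1 \subseteq H_2 \subseteq H_3$, of which only the last inclusion requires real work.

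First, $H_2 \subseteq H_1$ is immediate from Lemma \ref{lem:comp}: the graph of $\minterm^R_{\varphi,q}$ is $\rSigma_{k+1}(\{q\})$, uniformly in $\varphi$, so each value $\minterm^R_{\varphi,q}(\xvec)$ with $\xvec\in X^{<\om}$ is uniquely $\rSigma_{k+1}$-definable over $R$ from the parameters $\xvec\cup\{q\}\subseteq X\cup\{q\}$. Next, $H_3\subseteq H_1$ is standard: the value of any standard $\rSigma_{k+1}$-Skolem term at parameters $\yvec\in(X\cup\{q\})^{<\om}$ is by construction uniquely $\rSigma_{k+1}$-definable from $\yvec$, hence lies in $H_1$. (Closure of $H_1$ under this procedure is routine.)

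The main step is $H_1\subseteq H_3$ together with $H_3\subseteq H_2$, which we do in sequence. For $H_1\subseteq H_3$: given $z\in H_1$, choose $\yvec\in(X\cup\{q\})^{<\om}$ and an $\rSigma_{k+1}$ formula $\varphi$ such that $z$ is the unique $z'\in R$ with $R\sats\varphi(\yvec,z')$. Then $z=t_{\exists!\,\varphi}^R(\yvec)$ where $t_{\exists!\,\varphi}^R$ is the standard Skolem term for the $\rSigma_{k+1}$ formula asserting the existence of a (unique) witness; hence $z\in H_3$. So it remains to prove $H_3\subseteq H_2$, i.e.\ that every value of a standard $\rSigma_{k+1}$-Skolem term at a tuple $\yvec\in(X\cup\{q\})^{<\om}$ can be rewritten as $\minterm^R_{\psi,q}(\xvec)$ for some $\rSigma_{k+1}$ formula $\psi$ and some $\xvec\in X^{<\om}$. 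First, absorb occurrences of $q$ in $\yvec$ into the formula, reducing to $\xvec\in X^{<\om}$ and a modified formula $\varphi'$; it suffices to show that $t_{\varphi'}^R(\xvec)=\minterm^R_{\varphi',q}(\xvec)$. For $k=0$ this is immediate from \ref{dfn:minterm}. For $k>0$, we unfold the construction $(\tau_{\varphi'})^\beta$ from the proof of \cite[2.10]{fsit}: by $k$-soundness, $R=\Hull_k^R(\rho_k\cup\{q\})$, so the $R$-least witness $t_{\varphi'}^R(\xvec)$ is $\rSigma_k^R$-definable from some parameter in $\beta_0\cup\{q\}$ for some $\beta_0<\rho_k$, and $(\tau_{\varphi'})^\beta(\xvec)$ is defined precisely once $\beta$ has risen high enough to see such a witness. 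By the design of $(\tau_{\varphi'})^\beta$ as a least-witness search inside the level-$\beta$ hull, the first $\beta$ at which it is defined produces exactly the genuine $R$-least witness, i.e.\ $t_{\varphi'}^R(\xvec)$, and this equals $\minterm^R_{\varphi',q}(\xvec)$.

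The main obstacle is this last verification: interpreting the stratification $(\tau_{\varphi'})^\beta$ and confirming that the stabilization value at the least defining $\beta$ coincides with the standard $\rSigma_{k+1}$-Skolem value. Everything else is bookkeeping. Once this identity is in hand, the cycle closes and $H_1=H_2=H_3$.
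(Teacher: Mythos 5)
Your decomposition of the cycle is fine, and the easy inclusions ($H_2\sub H_1$ via Lemma~\ref{lem:comp}, $H_1\sub H_3$ by taking the standard Skolem term for the defining formula) are handled correctly. However, the step you flag as "the main obstacle" contains a genuine error, and it is exactly the point the paper's proof is designed to avoid.

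You assert that $t^R_{\varphi'}(\xvec)=\minterm^R_{\varphi',q}(\xvec)$, on the grounds that "the first $\beta$ at which $(\tau_{\varphi'})^\beta(\xvec)$ is defined produces exactly the genuine $R$-least witness." This is not what Definition~\ref{dfn:minterm} gives you. By that definition, $\minterm^R_{\varphi,q}(\xvec)=(\tau_\varphi)^{\beta_0}(\xvec)$ where $\beta_0$ is least so that \emph{some} witness is produced at level $\beta_0$; that is, it returns the first witness that becomes visible using $\rSigma_k$-parameters below $\beta_0$. There is no reason for this to be the globally $<_R$-least witness $t^R_{\varphi'}(\xvec)$: the $<_R$-least witness may first become $\rSigma_k$-definable from $\beta_1\cup\{q\}$ only at some $\beta_1>\beta_0$, in which case $(\tau_{\varphi'})^{\beta_0}(\xvec)\neq t^R_{\varphi'}(\xvec)$. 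This is precisely the obstruction that makes $\rSigma_{k+1}$-Skolemization nontrivial in Mitchell--Steel fine structure, and it is why the paper does \emph{not} try to prove a term-by-term identity. Instead the paper argues that the \emph{set} $H_2$ is closed downward in the following sense: if $y\in H_2$ is a witness to $\varphi(q,\xvec,\cdot)$ and there exists a smaller witness, then by composing minimal terms (Lemma~\ref{lem:comp}) there is a smaller witness in $H_2$. Iterating this descent pulls you down to $t^R_\varphi(q,\xvec)$, showing $t^R_\varphi(q,\xvec)\in H_2$ without ever asserting that it equals any particular $\minterm$-value.

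A secondary issue: you call $H_3\sub H_1$ "standard" because, you say, standard Skolem term values are "by construction uniquely $\rSigma_{k+1}$-definable." But $t^R_\varphi(\yvec)$ is the $<_R$-least witness, and $<_R$-minimality contributes a $\Pi_{k+1}$ clause; the assertion that this can be replaced by a pure $\rSigma_{k+1}$ definition is exactly the nontrivial content of \cite[2.10]{fsit}, and is essentially the statement the lemma is re-packaging. Treating it as self-evident is circular. The correct view (and the paper's) is that $H_1=H_2$ and $H_2\sub H_3$ are the easy inclusions, and all the substance resides in $H_3\sub H_2$, which must be proved by the descent argument, not a pointwise identity.
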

\begin{proof}The main thing is to see that $H_3\sub H_2$. For this, see the
proof of \cite[2.10]{fsit}, combined with (for example) the observation that
if $\xvec\in X^{<\om}$ and $y\in H_2$ and $R\sats\ex z\ \big[z<_R
y\wedge\varphi(q,\xvec,z)\big]$, then there is $z\in H_2$ such that $z<_R y$ and
$R\sats\varphi(q,\xvec,z)$; this is by \ref{lem:comp}. Applying this
observation finitely many times shows that $\tau^R_\varphi(q,\xvec)\in
H_2$.\end{proof}

\begin{dfn}\label{dfn:u-solidity-witnessed}
For $k<\om$, the terminology \emph{$k$-u-sound}, \emph{$k$-u-universal},  
etc, mean just what \emph{$k$-sound}, \emph{$k$-universal},
etc, mean in \cite[\S2]{fsit}.\footnote{In this notation, ``$k$'' is a variable but
``u'' is just a symbol. The symbol ``u'' indicates that the $u_n$'s are being used in the definition.}
Let $N$ be a $k$-u-sound premouse. We define $\rho_k^N=\rho_k(N)$, $p_k(N)=p_k^N$, $u_k(N)=u_k^N$, $\rho_{k+1}^N=\rho_{k+1}(N)$ and $p_{k+1}(N)=p_{k+1}^N$ as in \cite[Definition 2.8.1]{fsit}.
We also define $\widetilde{p}_0(N)=\widetilde{p}_0^N=\emptyset$
and if $k>0$, define $\widetilde{p}_k(N)=\widetilde{p}^N_k=r$
where $p_k^N=(r,u_{k-1}^N)$.
Given $u\in\core_0(N)$, $\widetilde{p}\in[\rho_0^N]^{<\om}$
and $\alpha\in\widetilde{p}$, define the \emph{$(k+1)$-solidity-witness for $(N,\widetilde{p},u)$ at $\alpha$}, denoted $W_{k+1}^N(\widetilde{p},u,\alpha)$, by
\begin{equation}\label{eqn:W_k+1} W_{k+1}^N(\widetilde{p},u,\alpha)=\Th_{k+1}^N(\alpha\cup\{\widetilde{p}\backslash(\alpha+1),u\}).\end{equation}
Say  $N$ is \emph{$(k+1)$-u-solidity-witnessed}, or just \emph{$(k+1)$-u-witness-solid}, iff
\[W_{k+1}^N(\widetilde{p}_{k+1}^N,u_k^N,\alpha)\in N\text{ for each }\alpha\in \widetilde{p}_{k+1}^N.\qedhere\]
\end{dfn}
Recall here that in \cite[Definition 2.8.2]{fsit} and \cite[Definition 2.15]{outline}, the adjective \emph{$(k+1)$-solid}, as applied to premice $N$,
means something different to
 \emph{$(k+1)$-u-solidity-witnessed}.

\begin{rem}
 Recall that $p_{1}^N=(\widetilde{p}_1^N,\emptyset)$
 and $\widetilde{p}_1^N\in[\rho_0^N]^{<\om}$.
 Suppose that
 $N$ is $k$-sound,
$p_k^N=(\widetilde{p}_k^N,u)$ 
and $s=\widetilde{p}_k^N\in[\rho_{k-1}^N]^{<\om}$, $\lh(s)=\ell$
and $b_0,\ldots,b_{\ell-1}$
are the $k$-u-solidity witnesses
for $(N,s,u)$ (that is, 
$b_i=W_{k}^N(s,u,\alpha_i)$ for each $i<\ell$, where $s=\{\alpha_0>\alpha_1>\ldots>\alpha_{\ell-1}\}$). Recall that
  $p_{k+1}^N$ has form $(r,u_k^N)$
 where:
 \begin{enumerate}[label=--]\item if $\rho_{k-1}^N<\rho_0^N$ then  $u_k^N=(s,u,b_0,\ldots,b_{\ell-1},\rho_{k-1}^N)$, and
 \item if $\rho_{k-1}^N=\rho_0^N$ then $u_k^N=(s,u,b_0,\ldots,b_{\ell-1})$.
 \end{enumerate}
 (Cf.~\cite[Definition 2.8.1]{fsit}, but note that we are discussing $p_{k+1}$ here, not $p_k$.)
\end{rem}

\begin{dfn}\label{dfn:q-fs}For $N$ a premouse, define $q_k=q_k^N$,
\emph{$k$-q-universality},
\emph{$k$-q-solidity} and
\emph{$k$-q-soundness} for $k\in[0,\om)$, recursively as follows. The definitions are actually made uniformly in premice $N$.

Define $q_0^N=\emptyset$ and say that $N$ is \emph{$0$-q-universal}, \emph{$0$-q-solid} and \emph{$0$-q-sound}.

Suppose $q_0,\ldots,q_k$ have been defined and
$N$ is $k$-q-sound and $k$-u-sound.

Now if $k\geq 1$ then suppose by induction that
\[ \core_0(N)=\Hull_k^N(\rho_k^N\un\{q_k,q_{k-1}\}).\]

Let $q_{k+1}$ be the $<_\lex$-least
$q\in[\OR]^{<\om}$ such that\footnote{In our notation, $\Th_{k+1}$ refers to pure
$\rSigma_{k+1}$ theories, but by \cite[\S2]{fsit}, it would make no difference
in the definition of $q_{k+1}$ (or $\rho_{k+1}$) whether we use pure or
generalized theories.}
\[ \Th_{k+1}^N(\rho_{k+1}^N\un\{q,q_{k}\})\notin\core_0(N). \]

Define the \emph{$(k+1)$th core }$\core_{k+1}(N)$ of $N$ as \[ C=\core_{k+1}(N)=\cHull_{k+1}^N(\rho_{k+1}^N\cup\{q_{k+1},q_k\}) \]
and the \emph{$(k+1)$th core map} $\pi:C\to N$
to be the uncollapse map. It will follow from Theorem \ref{thm:fs} below 
that $C$ is $k$-q-sound and $k$-u-sound and if $k\geq 1$ then \[ \core_0(C)=\Hull_k^{\core_0(C)}(\rho_k^C\cup\{q_k^C,q_{k-1}^C\}).\]
So $q_{k+1}^C$ is defined.
Say $N$ is \emph{$(k+1)$-q-universal}
iff $\pow(\rho_{k+1}^N)\cap C=\pow(\rho_{k+1}^N)\cap N$.

Given $u\in\core_0(N)$ and $q\in[\rho_0^N]^{<\om}$ and 
$\alpha\in q$, define
 $W_{k+1}^N(q,u,\alpha)$ exactly as in line (\ref{eqn:W_k+1}) in Definition \ref{dfn:u-solidity-witnessed}, with $\widetilde{p}=q$.\footnote{$W_{k+1}^N$
 was introduced in \ref{dfn:u-solidity-witnessed}
 assuming that $N$ is $k$-u-sound,
 whereas we are now working under the assumption that $N$ is $k$-q-sound. While we might ostensibly be enlarging the domain of this operator, there is no conflict of definitions in case $N$ is both $k$-u-sound and $k$-q-sound.}

We say $(q,u)$ is
\emph{$(k+1)$-solid for $N$} iff $W_{k+1}^N(q,u,\alpha)\in N$ for each $\alpha\in
q$.
We say $N$ is \emph{$(k+1)$-q-solid} iff $(q_{k+1},q_k)$ is
$(k+1)$-solid for $N$.\footnote{Note that the adjective \emph{$(k+1)$-q-solid}, as applied to premice, is analogous
to \emph{$(k+1)$-u-solidity-witnessed}, but \emph{not} analogous to \emph{$(k+1)$-u-solid};
we do not apply the term \emph{$(k+1)$-u-solid} to premice here.
See Definition \ref{dfn:u-solidity-witnessed}.}
We say
$N$ is \emph{$(k+1)$-q-sound} iff $N$ is $(k+1)$-q-solid and
\[ N=\Hull_{k+1}^N(\rho_{k+1}^N\un\{q_{k+1},q_k\}).\qedhere \]
\end{dfn}

The theorem below establishes the equivalence between standard Mitchell-Steel fine structure
(u-soundness, etc) and the fine structure introduced here (q-soundness, etc).
In part \ref{item:near} we show that the parameters provided by $u^N_k$
\emph{automatically} get into the relevant hulls,
so that the direct placement of the $u^N_k$
in those hulls in \cite{fsit} was superfluous.

\begin{tm}\label{thm:fs}
Let $k<\om$. Let $N$ be a premouse. Then:
\begin{enumerate}
\item\label{item:soundness} $N$ is $k$-q-sound iff $N$ is
$k$-u-sound.
\end{enumerate}
If $N$ is $k$-q-sound and $\om<\rho_k^N$ then:
\begin{enumerate}[resume*]
\item\label{item:lower_parameters} $\widetilde{p}_i^N=q_i^N$ for all $i\leq k$.
\item\label{item:near} Let $X\sub N$,
let $C=\cHull_{k+1}^N(X\un\{q_k^N\})$ and
$\pi:C\to N$ be the uncollapse. Then:  
\begin{enumerate}[label=\textup{(\roman*)},ref=(\roman*)]
\item\label{item:C_is_k-u-sound} $C$ is a $k$-u-sound premouse,\item\label{item:pi_is_near_k-u-embedding}  $\pi$ is a
near $k$-u-embedding\footnote{See \cite[Definition 4.1, Remark 4.3]{outline}.}; therefore if $\rho_k^C<\rho_0^C$ then $\pi(\rho_k^C)\geq\rho_k^N$  and $\pi``\rho_k^C\sub\rho_k^N$,
\item\label{item:solidpres} 
$\pi(W_i^C(q_i^C,q_{i-1}^C,\alpha))=W_i^N(q_i^N,q_{i-1}^N,\pi(\alpha))$
for all $i\in[1,k]$ and
$\alpha\in q_i^C$,
\item\label{item:pi_preserves_u_k} $\pi(u_k^C)=u_k^N\in H$.
\end{enumerate}
\item\label{item:k+1} We have:
\begin{enumerate}[label=--]\item $\widetilde{p}_{k+1}^N=q_{k+1}^N$,\item  $N$ is
$(k+1)$-u-witness-solid iff  $N$ is $(k+1)$-q-solid,\footnote{The lack of symmetry here is just due to a divergence in terminology; see \ref{dfn:u-solidity-witnessed}, \ref{dfn:q-fs}.}\item  $N$ is $(k+1)$-u-universal iff $N$ is $(k+1)$-q-universal, and
\item $N$ is
$(k+1)$-u-sound iff $N$ is
$(k+1)$-q-sound.
\end{enumerate}
\end{enumerate}
\end{tm}
\begin{proof}
We prove the proposition by induction on $k$. For $k=0$ it is easy. Assume
$k>0$ and the lemma holds at all $k'<k$. Parts \ref{item:soundness} and
\ref{item:lower_parameters} are trivial by induction (by part
\ref{item:k+1}). So consider part \ref{item:near}.
Let $H=\rg(\pi)$.
Note
that $X\un\{q_k^N\}\sub H$ and if $\xvec\in H$ and
$y\in\Hull_{k+1}^N(\{\xvec\})$ then
$y\in H$. Now we prove:

\begin{clmseven}\label{clm:params_projecta}
Let $i\leq k$. Then $q_i^N\in H$  and if $i<k$ and $\rho_i^N<\rho_0^N$ then
$\rho_i^N\in H$.
\end{clmseven}
\begin{proof}
We prove the claim by induction on $i$.
It is trivial for $i=0$ and $i=k$
(since $q_0^N=\emptyset$ and we put $q_k^N\in H$ directly). Suppose $0<i<k$ and the claim holds for all $i'<i$.

We show $q_i^N\in H$. So assume $n=\lh(q_i^N)>0$.
Note that $q_i^N$ is the unique $q\in[\OR]^{<\om}$ such
that
\begin{enumerate}[label=\tu{(}\roman*\tu{)}]
 \item\label{item:char_q_i_1}
$N=\Hull^N_i(\min(q)\un\{q,  q_{i-1}^N\})$ and
\item\label{item:char_q_i_2} $(q,q_{i-1}^N)$ is
$i$-solid for $N$ and
\item\label{item:char_q_i_3}  $\lh(q)=n$.
\end{enumerate}
Now each of the statements \ref{item:char_q_i_1}--\ref{item:char_q_i_3} are
$\rSigma_{k+1}^N$ assertions of $q$ (in no parameters,
but the $\rSigma_{k+1}$ formulas
used depend on $(\lh(q_0^N),\ldots,\lh(q_i^N)=n)$), and hence, $\{q_i^N\}$ is $\rSigma_{k+1}^N$, so $q_i^N\in H$.
If $i=1$, this is straightforward. If $i>1$,
then by (another) induction, we can refer to $q_j^N$ for each $j<i$, since we already know that $\{q_j^N\}$ is $\rSigma_{k+1}^N$. And for $i>1$, since $N$ is $(i-1)$-q-sound, 
\[ N=\Hull_{i-1}^N(\rho_{i-1}^N\un\{q_{i-1}^N,q_{i-2}^N\}).\]
 So by
\ref{lem:minterm}, statement \ref{item:char_q_i_1} just asserts ``$\all x\in N\ \big[$there is $\betavec\in\min(q)^{<\om}$ and an $\rSigma_i$ formula $\varphi$ such that
$x=\minterm_{\varphi,(q_{i-1}^N,q_{i-2}^N)}(\betavec,q)\big]$'', which is therefore $\rSigma_{k+1}^N$ (recall $i<k$). So $\{q_i^N\}$
is $\rSigma_{k+1}^N$, as required.

Now if $i<k$ and $\rho_i^N<\rho_0^N$ then by induction, $\rho^N_i$ is the least $\rho$ with
\[ N=\Hull^N_i(\rho\un\{q_i^N,q_{i-1}^N\}).\]
So as above and in the proof of \ref{lem:minterm}, if
$\rho_i^N<\OR^N$ then $\rho^N_i\in H$.\footnote{We don't seem to get an
$\rSigma_{k+1}^N$ definition of $\{\rho^N_{k-1}\}$ which is uniform in $N$ here. But we do get some $\rho\in H$ such
that $N=\Hull^N_{k-1}(\rho\un\{q_{k-1}^N,q_{k-2}^N\})$, and then if $\rho\neq\rho_{k-1}^N$,
we can get a smaller such $\rho'\in H$, and so on. Similarly, we don't seem to get a uniform definition of $\{p_{k-1}^N\}$.}
\end{proof}

\begin{clmseven}\label{clm:rSigma_k+1}
 If $\varphi$ is $\rSigma_{k+1}$ and $\xvec\in H$ and $N\sats\ex
y\ \varphi(\xvec,y)$ then $\ex y\in H$ such that $N\sats\varphi(\xvec,y)$.
\end{clmseven}
\begin{proof}
Let $q=\{q_k^N,q_{k-1}^N\}$. Since $q\in H$ and
$N=\Hull_k^N(\rho_k^N\un\{q\})$, \ref{lem:minterm}
applies and yields the claim.
\end{proof}

We have $H=\Hull^N_i(H)$
for each $i\leq k+1$. Therefore by induction, $C$ is $(k-1)$-sound, $\pi$ is a
near $(k-1)$-u-embedding, and so on. Combined with Claim
\ref{clm:params_projecta}, this also gives that if
$\rho_{k-1}^C<\rho_0^C$ then
$\pi(\rho_{k-1}^C)=\rho_{k-1}^N<\rho_0^N$, and if $\rho_{k-1}^C=\rho_0^C$ then
$\rho_{k-1}^N=\rho_0^N$. By this and Claim \ref{clm:rSigma_k+1} it is straightforward to see that $\pi``\rho_k^C\sub\rho_k^N$ and if $\rho_k^C<\rho_0^C$ then $\pi(\rho_k^C)\geq\rho_k^N$. Moreover, 
\[ C=\Hull_k^C(\rho_k^C\un\{\pi^{-1}(q_k^N),q_{k-1}^C\})\]
and $\pi$ is
$\rSigma_{k+1}$ elementary. To see that $q_k^C=\pi^{-1}(q_k^N)$, we
therefore just need that $(\pi^{-1}(q_k^N),q_{k-1}^C)$ is $k$-q-solid for $C$. For
this it suffices to know that $C$ has the appropriate generalized solidity
witnesses; see \cite[\S1.12]{imlc}. But this follows from the fact that $N$ has
generalized solidity witnesses for $(q_k^N,q_{k-1}^N)$ in $\rg(\pi)$, which
follows from Claim \ref{clm:rSigma_k+1}.

We have now established that $C$ is
$k$-q-sound, so by induction, $C$ is $k$-u-sound, giving \ref{item:near}\ref{item:C_is_k-u-sound}. For part \ref{item:near}\ref{item:pi_is_near_k-u-embedding},  it just remains to see that $\pi(p_k^C)=p_k^N$. (We already know that $C,N$ are $k$-u-sound, $\pi$ is an  $\rSigma_{k+1}$-elementary near $(k-1)$-u-embedding which preserves $\rho_{k-1}$, and $\pi``\rho_{k}^C\sub\rho_k^N$.)
Recall that $p_k^C=(\widetilde{p}_k^C,u_{k-1}^C)$
and $p_k^N=(\widetilde{p}_k^N,u_{k-1}^N)$.
By induction, we have $\widetilde{p}_k^C=q_k^C$ and $\widetilde{p}_k^N=q_k^N$, and since $\pi(q_k^C)=q_k^N$, 
we have $\pi(\widetilde{p}_k^C)=\widetilde{p}_k^N$.
So we just need  $\pi(u_{k-1}^C)=u_{k-1}^N$. For this, the only condition that remains to be verified is that $\pi$ preserves  $(k-1)$-u-solidity witnesses. But this follows immediately from $\rSigma_{k+1}$-elementarity
(in fact $\rSigma_k$-elementarity) together with preservation of the earlier defined objects (from the fine structural recursion).
The (full)  $\rSigma_{k+1}$-elementarity of $\pi$ and preservation of the $q_i$'s for $i\leq k$
similarly gives  \ref{item:near}\ref{item:solidpres}, and
part \ref{item:near}\ref{item:pi_preserves_u_k} is likewise.

Part \ref{item:k+1} follows easily from part \ref{item:near},
according to which,
 $u_k^N$ is automatically in
the relevant hulls.
\end{proof}

\subsection*{Acknowledgements} 
The author acknowledges TU Wien Bibliothek for financial support through its Open Access Funding Programme.

The author thanks John Steel for his hospitality during the author's stay at UC Berkeley in 2012/13,
and Steel's NSF grant, for supporting the author during that stay, when the work in \S\ref{sec:fs}
was done.

The author thanks the anonymous referee for their work and various corrections and suggestions for improvements.

The author thanks the \emph{Institut f\"ur Mathematische Logik und Grundlagenforschung,
Universit\"at M\"unster},
for the opportunity to present the inductive condensation stack argument in \S\ref{sec:con_stack} at the
\emph{Oberseminar f\"ur Mengenlehre} in Spring 2016, and also the organizers of the \emph{1st Irvine conference on descriptive inner model theory
and hod mice} (Irvine, July 2016)
for the opportunity to present it at that conference.

\bibliographystyle{plain}
\bibliography{the_bibliography}

Author email:\\  afirstname.alastname at gmail.com, afirstname.alastname at tuwien.ac.at

\end{document}